\newcommand{\LL}{\mathcal{L}}
\newcommand{\F}{\mathcal{F}}
\newcommand{\C}{\mathcal{C}}
\newcommand{\M}{\mathcal{M}}
\newcommand{\N}{\mathcal{N}}
\renewcommand{\H}{\mathcal{H}}
\newcommand{\R}{\mathcal{R}}
\newcommand{\ov}{\overline}
\newcommand{\fto}{\longrightarrow}
\newcommand{\inv}{^{-1}}
\newtheorem{definition}{Definition}[section]
\newtheorem{theorem}{Theorem}[section]
\newtheorem{theoremlett}{Theorem}
\newtheorem{propositionlett}[theoremlett]{Proposition}
\newtheorem{lemma}[theorem]{Lemma}
\newtheorem{proposition}[theorem]{Proposition}
\theoremstyle{definition}
\newtheorem{counterex}[theorem]{Counterexample}
\newtheorem{example}[theorem]{Example}
\newtheorem{hypothesis}[theorem]{Hypothesis}
\newtheorem*{remark}{Remark}
\newtheorem*{notation}{Notation}
\newtheorem{referencing}[subsection]{}
\author{Edoardo Salati}
\title{Limits and colimits, generators and relations of partial groups}
\email{edoardo.salati@tu-dresden.de}
\begin{document}
\maketitle

\begin{abstract}
	We analyse limits and colimits in the category $Part$ of partial groups, algebraic structures introduced by A. Chermak. We will prove that $Part$ is both complete and cocomplete and, in addition, that the full subcategory of finite partial groups is both finitely complete and finitely cocomplete.\\
	Cocompleteness is then used in order to define quotients of partial groups. We will also identify a category richer than $Set$ (the category of sets and set-maps) and build the free partial groups over objects is such category; this yields a larger class of free partial groups, eventually allowing to prove that every partial group is the quotient of a free partial group.
\end{abstract}

\section*{Introduction}

The notion of a partial group was developed in connection with fusion systems; roughly speaking, a fusion system is a category whose objects are the subgroups of a finite $p$-group $S$ and whose morphisms are injective group homomorphisms, including at least all the conjugations by elements of $S$. Research on fusion systems is currently very active in connection to Local Finite Group Theory, Modular Representation Theory and Algebraic Topology. In this last field it has been a tool employed in the proof of the Martino-Priddy conjecture, asserting that, if $p$ is a prime, the $p$-completed classifying space associated to a finite group $G$ is homotopy equivalent to the classifying space associated to the fusion system $\F_S(G)$ (described, for example, in \cite[Part I, Section 1]{Aschbacher/Kessar/Oliver:2011}), where $S \in \mathrm{Syl}_p(G)$. With regard to this, partial groups played a prominent role in the definition of the classifying space associated to an abstract saturated fusion system $\F$; this was achieved by replacing $\F$ with a different, slightly richer, category, the so-called \emph{centric linking system}, and then defining the classifying space of $\F$ as the $p$-completed geometric realization of such associated centric linking system, whose existence and uniqueness are therefore necessary conditions to the definition. Partial groups provide a translation of the categorical language into one more similar to that of groups, which in turn allowed A. Chermak in \cite{Chermak:2013} to tackle and solve the problem by means of the well established group-theoretic machinery. The interested reader may find a beautiful survey in \cite{Aschbacher-Oliver:2016}, with Sections 3 and 4 covering the interplay between Algebra and Homotopy Theory.\\

There is a natural notion of morphisms of partial groups, as defined in \cite{Chermak:2022}, and in the same paper Chermak deals with further properties of the category $Part$ of partial groups, including completeness and cocompleteness. In particular, in \cite{Chermak:2022} the existence of colimits is proved exclusively over diagrams with specific properties; the main goal of this paper is to correct a mistake appeared in the proof of the cocompleteness of $Part$ in an unpublished early work of Chermak and, at the same time, prove the following result.
\begin{theoremlett}
	\label{theoremA}
	The category $Part$ is complete and cocomplete. In addition, if $FinPart$ is the full subcategory of $Part$ with objects the finite partial groups, then $FinPart$ is finitely complete and finitely cocomplete.
\end{theoremlett}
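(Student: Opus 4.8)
The plan is to handle limits and colimits separately. \emph{Completeness} is the easier half: I would exhibit products and equalizers directly, with underlying sets computed as in $Set$. Given a family $(\M_i)_{i\in I}$, equip $P=\prod_i\M_i$ with the coordinatewise structure — a word $w\in\mathbf W(P)$ is the same datum as a family $(w_i)_i$ of words of a common length with $w_i\in\mathbf W(\M_i)$, and one declares $w\in\mathbf D(P)$ iff $w_i\in\mathbf D(\M_i)$ for every $i$, setting $\Pi(w)=(\Pi(w_i))_i$ and taking inversion and identity coordinatewise. The partial group axioms hold entry by entry, and the projections are morphisms exhibiting the product. For $f,g\colon\M\rightrightarrows\N$ the equalizer is $\mathcal K=\{x\in\M:f(x)=g(x)\}$ with $\mathbf D(\mathcal K)=\mathbf D(\M)\cap\mathbf W(\mathcal K)$ and the operations restricted; the only nontrivial point is that $\mathbf D(\mathcal K)$ is closed under the axioms and that $\Pi$ carries it into $\mathcal K$, both of which follow because $f$ and $g$ are morphisms agreeing on every entry of a word over $\mathcal K$. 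Products and equalizers yield all small limits, so $Part$ is complete. Since a finite product of finite partial groups is finite and an equalizer is a partial subgroup, these constructions stay inside $FinPart$, which is therefore finitely complete.

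\emph{Cocompleteness} I would prove by constructing coproducts and coequalizers. The coproduct $\coprod_i\M_i$ has as underlying set the disjoint union of the $\M_i$ with all identity elements identified, and as domain the set of words which, away from the shared identity, are confined to a single factor — that is, the union of the $\mathbf D(\M_i)$ — with $\Pi$ read off factorwise. One checks that this domain is already closed under the axioms, since a subword, an inverse, a word times its inverse, and an associativity substitution of a word confined to one factor all remain confined to it; the universal property is then immediate, as every word in the domain is evaluated inside a single $\M_i$. The coequalizer of $f,g\colon\M\rightrightarrows\N$ is the quotient $\N/{\approx}$ by the smallest \emph{partial group congruence} $\approx$ on $\N$ with $f(x)\approx g(x)$ for all $x$ — by which I mean an equivalence relation that is compatible with inversion and such that, whenever two words of $\mathbf D(\N)$ are $\approx$-related entry by entry, their $\Pi$-values are $\approx$-related. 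One takes $\mathbf D(\N/{\approx})$ to be the image of $\mathbf D(\N)$; the congruence conditions make $\Pi$ and inversion descend, and one then checks the result is a partial group. The class of such congruences is closed under arbitrary intersection and contains the total relation (the kernel relation of the unique morphism to the one-point partial group $\{\mathbf 1\}$), so the smallest one exists; and its universal property follows because the kernel relation of any morphism coequalizing $f$ and $g$ is a partial group congruence containing $\{(f(x),g(x))\}$. Coproducts together with coequalizers yield all small colimits, so $Part$ is cocomplete.

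The step I expect to be the main obstacle — and, I suspect, where the earlier unpublished argument of Chermak went astray — is the coequalizer: one must pin down exactly the right notion of partial group congruence (a version that is too weak will not let $\Pi$ descend, and one must resist assuming that the image of a domain under an arbitrary quotient of the underlying set is automatically a domain), and then verify carefully that $\N/{\approx}$, with domain the image of $\mathbf D(\N)$, really satisfies every axiom. These verifications rest on the fact that the induced map $\mathbf W(\N)\to\mathbf W(\N/{\approx})$ is surjective and lifts decompositions of words, so that subwords, inverses and substitutions in the quotient can be pulled back to honest elements of $\mathbf D(\N)$; keeping this bookkeeping straight is the heart of the matter.

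For the finite case, I would observe that both colimit constructions preserve finiteness of the underlying set: a finite coproduct of finite partial groups has underlying set of cardinality $\sum_i|\M_i|-(|I|-1)$, and a coequalizer of finite partial groups is a set-theoretic quotient of $\N$. Hence every finite colimit of finite partial groups is again a finite partial group — its domain may well be infinite, but ``finite'' here refers only to the underlying set — and $FinPart$ is finitely cocomplete. It is worth noting that this is a point at which partial groups are better behaved than groups: $FinGrp$ has no binary coproducts, because forcing all words into the domain turns a coproduct into an infinite free product, and it is precisely the licence to keep the domain small that keeps the coproduct of finite partial groups finite.
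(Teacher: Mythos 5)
Your proposal is correct and follows essentially the same route as the paper: products and equalizers exactly as in the paper's proof of completeness, coproducts as the disjoint union with identified units and domain the images of the factor domains, and the coequalizer as the quotient by the smallest product-compatible congruence containing the pairs $(xf,xg)$ (existence via closure under intersection and the total relation, universal property via the kernel relation of any coequalizing morphism), with finiteness of underlying sets preserved for $FinPart$. The point you flag as the crux --- that the naive set-theoretic quotient need not let $\Pi$ descend --- is precisely the issue the paper isolates with its counterexample before introducing the relation $\R$.
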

Early examples of how colimits in $Part$ may be exploited and used as a tool are the \emph{elementary expansions} defined by Chermak in \cite{Chermak:2016:II}, which consist in the main construction appearing in \cite[Section 5]{Chermak:2013} and building the centric linking system over a fusion system, as well as the construction realized by A. D\'{i}az Ramos, R. Molinier and A. Viruel in \cite{Ramos/Molinier/Viruel:2021} for proving that the category $Part$ is universal.\\
We will also take advantage of the categorical setting in order to define quotients for all partial groups and to study free partial groups: this will set the necessary background for the following result.
\begin{propositionlett}\label{prop:every-pg-is-quot-free}
	Every partial group is a quotient of a free partial group.
\end{propositionlett}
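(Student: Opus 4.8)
\noindent\textit{Strategy.} The plan is to realise an arbitrary partial group $\mathcal{M}$ as a quotient of the free partial group built on $\mathcal{M}$ itself, now viewed as an object of the enriched category over which free partial groups were constructed above. Write $\mathcal{C}$ for that category, which is richer than $Set$, let $V\colon Part\to\mathcal{C}$ be the forgetful functor and $F\colon\mathcal{C}\to Part$ its left adjoint, so that $V(\mathcal{M})$ records not only the underlying set of $\mathcal{M}$ but also its domain $\mathcal{D}$. The candidate presentation is the counit $\varepsilon=\varepsilon_{\mathcal{M}}\colon F(V(\mathcal{M}))\to\mathcal{M}$ of the adjunction, which is a morphism of partial groups, and I would prove that $\varepsilon$ exhibits $\mathcal{M}$ as a quotient of the free partial group $F(V(\mathcal{M}))$ in the sense introduced above.

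First I would check that $\varepsilon$ is surjective on underlying sets: under the adjunction $\varepsilon$ corresponds to $\mathrm{id}_{V(\mathcal{M})}$, so precomposing $V(\varepsilon)$ with the canonical inclusion $V(\mathcal{M})\hookrightarrow V(F(V(\mathcal{M})))$ gives the identity of $V(\mathcal{M})$; in particular every element of $\mathcal{M}$ lies in the image of $\varepsilon$. The second, crucial point is that $\varepsilon$ is surjective \emph{on domains} as well: since $V(\mathcal{M})$ carries the full domain $\mathcal{D}$ and this datum is preserved by the inclusion $V(\mathcal{M})\hookrightarrow V(F(V(\mathcal{M})))$, every word $w\in\mathcal{D}$ already occurs, via that inclusion, as a word in the domain of $F(V(\mathcal{M}))$ whose image under $\varepsilon$ is $w$. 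Because $\varepsilon$ is a morphism of partial groups it cannot enlarge the domain, so the domain of $\mathcal{M}$ is precisely the image under $\varepsilon$ of the domain of $F(V(\mathcal{M}))$, and $\Pi_{\mathcal{M}}$ is the product induced from $\Pi_{F(V(\mathcal{M}))}$. Hence $\varepsilon$ presents $\mathcal{M}$ as a quotient of $F(V(\mathcal{M}))$; equivalently, one forms the kernel pair of $\varepsilon$ (which exists since $Part$ is complete by Theorem~\ref{theoremA}) and its coequalizer (which exists since $Part$ is cocomplete), and checks, using the description of limits and colimits in $Part$ obtained in the proof of Theorem~\ref{theoremA}, that this coequalizer is $\mathcal{M}$.

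The main obstacle is exactly the surjectivity on domains, and it is what forces the detour through $\mathcal{C}$ rather than through $Set$: the free partial group on a bare set has a minimal domain, essentially only the words whose being defined is imposed by the axioms, so the counit of the free/forgetful adjunction over $Set$ would in general fail to cover $\mathcal{D}$, and $\mathcal{M}$ would not be a quotient of it. Once $Set$ has been enlarged to a category $\mathcal{C}$ whose objects already prescribe a set of ``defined'' words and the adjunction $F\dashv V$ has been set up (for instance via the adjoint functor theorem, using the (co)completeness of $Part$), the remaining verifications are routine bookkeeping with the universal property of $F$ and the definition of quotient.
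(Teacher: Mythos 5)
Your setup is the same as the paper's: you work over the enriched category (the paper's $Set^s$), take the free partial group $\M$ on $(\LL,D_\LL,i_\LL)$, and your counit $\varepsilon$ is exactly the paper's surjective map $f:\M\fto\LL$ extending the identity; your observations that $\varepsilon$ is surjective on elements and that $(D_\M)\varepsilon^*=D_\LL$ are correct and appear in the paper's proof as well. The genuine gap is in the step ``hence $\varepsilon$ presents $\LL$ as a quotient.'' In this paper a quotient is not ``surjective image with surjective domain,'' nor ``coequalizer of the kernel pair'': by Definition \ref{defin-quotient-local} a quotient of $\M$ means $\M/\N=coker(\N\hookrightarrow\M)=coeq(j,\hat 1)$ for an impartial (equivalently, by Lemma \ref{lemma:quotients-part-impart-sbgr}, partial) subgroup $\N$ of $\M$. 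Your argument never produces such a subgroup, so it does not prove the proposition in the sense in which it is stated; it proves instead that $\varepsilon$ is a regular epimorphism. That weaker statement is true, and given the explicit coequalizer construction of Lemma \ref{Part has coeq} the kernel-pair check you defer really is routine (the fiber relation of $\varepsilon$ already satisfies \eqref{condition of sim}, so the coequalizer relation is exactly it), but you give no justification for treating it as ``equivalent'' to being a quotient by a subgroup, and that equivalence is precisely the nontrivial content here.

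Concretely, the paper's proof takes $R_0=\{u\in D_\LL\mid (u)\Pi=1_\LL\}$, pushes it into $\M$ via $u\mapsto(u)\Pi_\M$, lets $\N$ be the partial subgroup of $\M$ generated by the resulting set, and proves $\LL\cong\M/\N$. Surjectivity and domain-surjectivity of the induced map $\M/\N\fto\LL$ are the easy half (and match what you verified for $\varepsilon$); the crux is injectivity, i.e. that the coequalizer relation $\R$ defining $\M/\N$ is no coarser than the fibers of $\varepsilon$. This is done by rewriting any $u\in\M$ as the product of the length-two word $\bigl(u\circ(u)i\Pi,\ (u)\Pi\bigr)$, noting that the first entries lie in $\N$ whenever $(u)\varepsilon=(v)\varepsilon$, and invoking the closure property \eqref{condition of sim} of $\R$. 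Nothing in your proposal plays this role, so as written it establishes that every partial group is a regular quotient (a coequalizer) of a free partial group, but not that it is a quotient $\M/\N$ by a partial subgroup — which is both the statement as defined in Section \ref{section:quotients} and the form needed later (e.g. in Lemma \ref{lemma:adding relations}), where the explicit presentation by the subgroup generated by relations is used. A minor additional point: appealing to the adjoint functor theorem to obtain $F\dashv U$ is unnecessary and would anyway not supply the explicit description of $\M$ (in particular $D_\LL\subseteq D_\M$ and the representation of elements of $\M$ by words in $\LL$) that the missing injectivity argument relies on.
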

The notion of free partial groups over sets is analyzed very early, in Section \ref{section:intro}, as it is used to build a counterexample to Chermak's construction of colimits of partial groups. Such counterexample is presented in Section \ref{section:colimits}, together with a proof of the cocompleteness of $Part$. We will also report a proof of the completeness of $Part$ in Section \ref{section:limits}, although this can also be found in \cite{Chermak:2022}, with the purpose of collecting all coherent results in one single paper. Combining results of Sections \ref{section:colimits} and \ref{section:limits} yields a proof of Theorem \ref{theoremA}.\\
Existence of colimits is exploited in Section \ref{section:quotients} in order to define quotients of partial groups. Quotients were previously defined only for very specific subclasses, namely for groups and for the so-called localities, as defined in \cite[Sections 2 to 4]{Chermak:2022} and it will be proved that the definition given in this paper specializes to the already existing notions. Section \ref{section:gen and relations} is dedicated to the proof of Proposition \ref{prop:every-pg-is-quot-free}; as a consequence of the analysis undertaken in Section \ref{section:intro} it turns out that the notion of free partial groups over sets is too weak for such a result. After replacing the category $Set$ of sets and set-maps with a better suited category, denoted by $Set^s$, the notions of quotients and of free partial groups over $Set^s$ allow a proof in terms of generators and relations, in analogy with what happens for groups. The section is ended with an example of how these tools may be used: in detail, it is shown that Chermak's elementary expansions construction may be realized in terms of generators and relations.

\section*{Acknowledgements}
First of all, to my supervisor Ellen Henke, who has introduced me to the world of fusion systems, partial groups and localities, many thanks for reading the paper and for the several pieces of advice she provided me with.\\
A special thank to Andrew Chermak, for reading the results and advancing the idea of publishing them in a paper. He was the first to deal with the categorical properties of partial groups and this paper, actually, stems from previous work of his.\\
Thanks also to A. Gonz\'{a}lez and to A. D\'{i}az Ramos, R. Molinier and A. Viruel for their interest, the emails we exchanged and the suggestions I received from them.

\section{Partial groups and free partial groups}
\label{section:intro}
		
Roughly speaking, a \emph{partial group} is a set $\LL$ equipped with a \emph{partial} operation, defined on a subset of the set of words on the alphabet $\LL$; in other words, it may not be possible to multiply all elements as it happens with groups. \\
A little remark about the notation adopted throughout the paper: we will use right-hand notation for functions and functors, i.e. writing $(x)f$, or simply $xf$, whenever $f$ is a function or a functor and $xf$ is the object (or morphism) associated to $x$ by $f$. Composition of functions or functors will then be denoted by simple juxtaposition or by the symbol $\cdot$.\\

\begin{definition}
	\label{def partial gr}
	Let $\LL$ be a non-empty set and $W(\LL)=W$ be the free monoid on $\LL$; denote concatenation of words in $W$ by the symbol $\circ$. Consider a subset $D=D(\LL) \subseteq W$, a map $\Pi: D \rightarrow \LL$, which will be referred to as the \emph{multivariable product} of $\LL$ (and sometimes just as the \emph{product} of $\LL$), and an involutory bijection $i: \LL \rightarrow \LL$.
	Then the quadruple $(\LL, D, \Pi, i)$ (often denoted just by $\LL$ when the context leaves no ambiguity) is a partial group provided that the following hold:
	\begin{enumerate}
		\item $\LL \subseteq D \qquad$ and moreover $ \qquad u \circ v \in D \quad \implies \quad u,v \in D$;
		\item $\Pi |_{\LL} = id_{\LL}$;
		\item if $u \circ v \circ w \in D$, then $\quad u \circ (v)\Pi \circ w \in D \quad$ and $\quad (u \circ v \circ w)\Pi = (u \circ (v)\Pi \circ w)\Pi$;
		\item by extending $i$ to $W(\LL)$ defining $(x_1, \dots, x_n)i = ((x_n)i, \dots, (x_1)i)$, if $w \in D$ then we have $ \quad (w)i \circ w \in D \quad$ and $\quad ((w)i \circ w) \Pi = 1_\LL = (\emptyset)\Pi$, where $\emptyset$ is the empty word.
	\end{enumerate}
	In analogy with terminology for groups, we call the element $1_\LL$ the \emph{unit} or \emph{neutral element} of the partial group $\LL$ and for every $x \in \LL$, $(x)i$ is the \emph{inverse} of $x$.\\
	The partial group $(\LL, D, \Pi, i)$ is \emph{finite} if the set $\LL$ is finite.
\end{definition}

For a survey of some elementary properties of partial groups we refer to \cite[Section 1]{Chermak:2022}. Clearly, $\LL$ is a group if and only if $D(\LL)=W(\LL)$ and, in such a case, $\Pi$ gives the group multiplication thanks to associativity. \\
A. Chermak also defines a notion of morphisms of partial groups.

\begin{definition}
	\label{def morphisms part gr}
	Given partial groups $(\LL, D, \Pi, i)$ and $(\LL', D', \Pi', i')$, a morphism $\beta$ of partial groups from $\LL$ to $\LL'$ is a set-wise map $\beta: \LL \rightarrow \LL'$ such that, if $\beta^* : W(\LL) \rightarrow W(\LL')$ is the componentwise extension, i.e. $(x_1, \dots, x_n)\beta^*:=((x_1)\beta, \dots, (x_n)\beta)$, then:
	\begin{itemize}
		\item[(a)] $(D)\beta^* \subseteq D'$;
		\item[(b)] we have a commutative diagram
		\begin{center}
		\begin{tikzcd}
			D \ar[r,"\Pi"] \ar[d, "\beta^*"] & \LL \ar[d, "\beta"] \\
			D' \ar[r, "\Pi '"]	& \LL'.
		\end{tikzcd}
		\end{center}
	\end{itemize}
\end{definition}

We therefore obtain a category $Part$ whose objects are partial groups and whose morphisms are morphisms of partial groups according to the above definition, clearly with the usual composition of maps. It is trivial to check that $\beta$ is an isomorphism (in the categorical sense) if and only if $\beta$ is a bijective morphism of partial groups satisfying $(D)\beta^* = D'$. Moreover, morphisms of partial groups behave well with respect to units, that is if $\beta: (\LL,D,\Pi,i) \fto (\LL',D',\Pi',i')$ is a morphism between partial groups, then $(1_\LL)\beta = 1_{\LL'}$, as shown in \cite[Lemma 1.13]{Chermak:2022}. More generally, there is a fully-faithful inclusion functor $Grp \hookrightarrow Part$.\\

\begin{notation}
For any map $f: X \fto Y$ between sets $X,Y$, the notation $f^*$ will always denote the induced map $f^*:W(X) \fto W(Y)$ (or restrictions of it) as defined in Definition \ref{def morphisms part gr}. This applies also for maps between partial groups or other structures with a forgetful functor in $Set$, up to applying such forgetful functor. On the other hand, given a partial group $(\LL,D,\Pi, i)$, in accordance to the notation in Definition \ref{def partial gr} the symbol $i$ will denote also the extension of $i$ to $W(\LL)$ taken by reversing order. We will also equivalently write $D(\LL)$ or $D_\LL$ for the domain of a partial group $\LL=(\LL,D_\LL,\Pi,i)$.
\end{notation}

Define the category of pointed sets, $Set^*$, as the category whose objects are pairs $(X,x)$, where $X$ is a non-empty set and $x \in X$, and whose morphisms $f: (X,x) \fto (Y,y)$ are set-maps $f:X \fto Y$ with the property that $(x)f=y$. Composition is given by the usual composition of maps. Since partial groups are naturally pointed at the unit, we will work with pointed sets instead of sets; this will simplify the construction of free partial groups. In better words, let's consider first the forgetful functor $U : Grp \fto Set$; we can actually define a slightly different forgetful functor (still denoted by $U$) which remembers a little more, namely
\[
U: Grp \fto Set^*, \quad (G)U = (G,1_G)
\]
where $1_G$ is the unit of the group. This is possible since $(1_G)f=1_H$ whenever $f : G \fto H$ is a homomorphism between groups. Since morphisms of partial group have the same property, we can similarly define a forgetful functor
\[
U: Part \fto Set^*, \quad (\LL,D,\Pi,i)U = (\LL, 1_{\LL}).
\]
It is a classical situation that in which $U$ is a right adjoint to the free-construction functor; indeed, we will prove that $U: Part \fto Set^*$ is a right adjoint by building free partial groups over pointed sets. It will become clear that the construction may be carried in a very similar way also when dealing with the category $Set$ in place of $Set^*$ and yields the same partial group.

\subsection{Free partial groups over pointed sets}
Suppose we have a pointed set $(X,1)$ (the base-point will become the unit of the partial group). Consider the set $Y := \{1\} \sqcup X^* \sqcup \hat{X}^*$, where $X^* = X \setminus \{1\}$, $\sqcup$ denotes the disjoint union and we use $\hat{X}^*$ to distinguish the two copies of $X^*$ (the same notation will also be adopted for the respective elements).
Define an involutory bijection $i$ on $Y$ by $(1)i = 1$ and $(x)i= \hat{x}$ and set $D \subseteq W(\LL)$ to be the set of words obtained from alternating finite strings of the form $(\dots, x, \hat{x}, x, \hat{x}, \dots)$, for $x \in X^*$, by adding in any position any finite number of copies of the element $1$. We say that such strings are built on the element $x$.\\
For $w \in D$ built on the element $x \in X^*$, define
\[
(w)\Pi =
\begin{cases}
	x \quad \text{ if the number of $x$ is greater than that of $\hat{x}$}, \\
	1 \quad \text{ if the number of $x$ equals that of $\hat{x}$}, \\
	\hat{x} \quad \text{ if the number of $x$ is lower than that of $\hat{x}$}.
\end{cases}
\]
We now prove that $(Y, D, \Pi, i)$ is a partial group. Clearly $Y \subseteq D$ and $\Pi|_Y = \mathrm{id}_Y$; if $u,v$ are words in $W(Y)$ such that $u \circ v \in D$, then it is a string built on some element $x \in X^*$ and, therefore, also $u$ and $v$ are strings built on the same element $x$, yielding $u,v \in D$. These are axioms (1) and (2) of Definition \ref{def partial gr}. To prove axiom (3), first of all note that we may identify the $\hat{-}$ notation with application of $i$. Suppose that $u \circ v \circ w \in D$; if $v$ is the empty word or a word made only of the element $1$, there is nothing to prove, otherwise, supposing that $v$ has the form $v=(a_0,\dots, b_0)$ with $a_0,b_0 \in \{1,x,\hat{x}\}$, we may find elements $a, b \in \{x, \hat{x}\}$ so that $v=(1,\dots, 1, a, \dots, b, 1, \dots, 1)$. Then the last occurrence of an element which is not $1$ in $u$ must be $\hat{a}$ and the first occurrence of an element which is not $1$ in $w$ must be $\hat{b}$. If $a=x$, then $(v)\Pi \in \{1,x\}$; when $(v)\Pi = 1$, $b=\hat{x}$, so $\hat{a}=\hat{x}$ and $\hat{b}=x$; this implies $u \circ (v)\Pi \circ w \in D$. When $(v)\Pi = x$, then $b=x$ and in this case we have $\hat{a}=\hat{x}$, $(v)\Pi=x$ and $\hat{b}=\hat{x}$, showing again that $u \circ (v)\Pi \circ w \in D$. The case $a=\hat{x}$ is analogous; that $(u \circ v \circ w)\Pi=(u \circ (v)\Pi \circ w)\Pi$ is trivial, so axiom (3) holds. Let now $u = (u_0, \dots, u_n) \in D$ and suppose it is built on an element $x \in X^*$; then $(u)i \circ u=((u_n)i, \dots, (u_0)i, u_0, \dots, u_n)$ and, by definition of $i$, also $(u)i$ is built on $x$. Let $u_k$ be the first occurrence of an element different from $1$ in $u$, then $(u_k)i$ is the last occurrence of element different from $1$ in $(u)i$, showing that $(u)i \circ u$ is built on $x$. Thus $(u)i \circ u \in D$. Finally, let $d$ be the difference between the number of occurrences of $x$ and the number of occurrences of $\hat{x}$ in $u$. Then $d \in \{-1,0,1\}$ and the same difference computed on $(u)i$ gives $-d$; this shows that $((u)i\circ u)\Pi = 1$, so axiom (4) also holds.\\

Define an association $F$ by setting $(X,1)F := (Y,D,\Pi,i)$. In order to obtain a functor it remains to define $F$ over morphisms; let $f: (X,1_X) \rightarrow (Z,1_Z)$ be a morphism of pointed sets, then we associate to $f$ the map $fF: (X,1_X)F \rightarrow (Z,1_Z)F$ defined by setting $(1_X)fF=1_Y$, $(x)fF=(x)f$ and $(\hat{x})fF=\widehat{(x)f}$. It is straightforward to check that $fF$ is a partial group morphism and that $F$ is indeed a functor. A similar construction can be carried over sets and in this case one ends with the free partial group over the set $X$ being exactly $(X \sqcup \{1\}, 1)F$.\\
Let $U: Part \fto Set^*$ be the forgetful functor defined by $(\LL,D,\Pi,i)U=(\LL,1_\LL)$; for every pointed set $(X,1)$ there exists a natural inclusion $j: (X,1) \hookrightarrow (X,1)FU$ simply defined by $(x)j=x$ for every $x \in X$. Then $(X,1)F$ is the free partial group over the pair $((X,1),j)$; indeed for any pointed-set map $f: (X,1) \rightarrow (\LL,1_\LL)$, where $\LL$ is a partial group, there exists a unique morphism of partial groups $\tilde{f} : (X,1)F \fto \LL$ making the following diagram commute.
\begin{center}
\begin{tikzcd}
	(X,1) \ar[r, "j"] \ar[d, "f"] & (X,1)FU \ar[dl, dotted, "(\tilde{f})U"]\\
	(\LL)U 
\end{tikzcd}
\end{center}
$\tilde{f}$ is defined by $(1)\tilde{f} = 1_\LL$, $(x)\tilde{f}=(x)f$ and $(\hat{x})\tilde{f} \mapsto ((x)f)^{-1}$, where $x \in X$ and the exponent $-1$ denotes the inverse in $\LL$. Note that the existence of the morphism $\tilde{f}$ is guaranteed by the fact that the domain $(X,1)F$ is the \emph{smallest} possible satisfying axioms (1) to (4) of Definition \ref{def partial gr} (this will become formally clear in Lemma \ref{free-part-gr-str-set}), therefore universal initial with respect to morphisms of partial groups. As an immediate consequence we have bijections
\begin{equation*}
	\label{adjunction}
	\tag{$\prime$}
	Hom_{Part}((X,1)F, \LL) \cong Hom_{Set^*}((X,1), (\LL)U)
\end{equation*}
yielding the following lemma.

\begin{lemma} \label{adjoint functors free-forgetful-first}
	With the notation above, $F \dashv U: Part \fto Set^*$.
	\begin{proof}
		We explicitly define a natural isomorphism between the bifunctors $Hom_{Part}((-)F,-)$ and $Hom_{Set^*}(-, (-)U)$. Let $(X,1) \in Ob(Set^*)$ and $\LL=(\LL,D,\Pi,i) \in Ob(Part)$ and define
		\begin{center}
		\begin{tikzcd}
		\psi_{((X,1),\LL)}: &[-25pt] Hom_{Set^*}((X,1), (\LL)U) \ar[r] & Hom_{Part}((X,1)F,\LL) \\ [-25pt]
			&			f \ar[r, mapsto] & \tilde{f}
		\end{tikzcd}
		\end{center}
		where $\tilde{f}$ is the map defined above; then (\ref{adjunction}) shows that $\psi_{((X,1),\LL)}$ is a bijection.\\
		We need to show naturality; let $g: (Y,1) \fto (X,1)$ be a morphism of pointed sets, $h: (\LL,D(\LL), \Pi_\LL, i_\LL) \fto (\M, D(\M), \Pi_\M, i_\M)$ be a morphism of partial groups and consider the following diagram 
		\begin{center} 
		\begin{tikzcd}
		Hom_{Set^*}((X,1),(\LL)U) \ar[d] \ar[r,"\psi_{((X,1),\LL)}"] &[+20pt]	Hom_{Part}((X,1)F,\LL) \ar[d] \\
		Hom_{Set^*}((Y,1),(\M)U) \ar[r,"\psi_{((Y,1),\M)}"] &	Hom_{Part}((Y,1)F,\M)
		\end{tikzcd}
		\end{center}
		where the vertical arrows are pre- and post-composition with the morphisms $g$ and $h$ on the left side, $gF$ and $hF$ on the right side.
		Let $f \in Hom_{Set^*}((X,1),(\LL)U)$; following the upper-right path we get
		\[
		f \mapsto \tilde{f} \mapsto gF \cdot \tilde{f} \cdot h,
		\]
		whereas the bottom-left path yields
		\[
		f \mapsto g \cdot f \cdot hU \mapsto \widetilde{g \cdot f \cdot hU}.
		\]
		By (\ref{adjunction}) these morphisms are determined by their image on $1$ and $Y$, and for $y \in Y \cup \{1\}$ we get
		\[
		(y)gF \cdot \tilde{f} \cdot h = (y)g \tilde{f} \cdot h = ((y)gf)h,
		\]
		and
		\[
		(y)\widetilde{g \cdot f \cdot hU} = (y)(g \cdot f \cdot hU) = ((y)gf)h.
		\]
		This completes the proof.
	\end{proof}
\end{lemma}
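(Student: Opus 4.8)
The plan is to promote the family of bijections $(\ref{adjunction})$ to a natural isomorphism between the bifunctors $Hom_{Part}((-)F,-)$ and $Hom_{Set^*}(-,(-)U)$, since exhibiting such an isomorphism is exactly what it means to have $F\dashv U$. Concretely, for each pointed set $(X,1)$ and each partial group $\LL=(\LL,D,\Pi,i)$ I would take $\psi_{((X,1),\LL)}\colon Hom_{Set^*}((X,1),(\LL)U)\fto Hom_{Part}((X,1)F,\LL)$ to send $f$ to the extension $\tilde f$ already written down above, determined by $(1)\tilde f=1_\LL$, $(x)\tilde f=(x)f$ and $(\hat x)\tilde f=((x)f)\inv$; its inverse is restriction along the inclusion $j\colon(X,1)\hookrightarrow (X,1)FU$. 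That $\psi_{((X,1),\LL)}$ is a bijection is precisely $(\ref{adjunction})$, which in turn rests on the universal property of the free construction: every morphism out of $(X,1)F$ is pinned down by its values on the generating set $\{1\}\cup X^*$ (the values on $\hat X^*$ being then forced, since morphisms of partial groups preserve inverses), and conversely any prescription on $\{1\}\cup X^*$ extends because $(X,1)F$ is the smallest partial group carrying that data, in the precise sense of Lemma \ref{free-part-gr-str-set}.

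It remains to verify naturality. Given $g\colon(Y,1)\fto(X,1)$ in $Set^*$ and $h\colon\LL\fto\M$ in $Part$, one has to check that the evident square commutes, i.e. that $gF\cdot\tilde f\cdot h=\widetilde{g\cdot f\cdot hU}$ for every pointed-set map $f\colon(X,1)\fto(\LL)U$. Both sides are morphisms out of the free partial group $(Y,1)F$, so by the uniqueness part of Lemma \ref{free-part-gr-str-set} it is enough to compare their effect on $1$ and on the generators $y\in Y^*$; unwinding the definitions, $(y)\,gF\cdot\tilde f\cdot h=((y)gf)h$ and likewise $(y)\,\widetilde{g\cdot f\cdot hU}=((y)gf)h$, and on $\hat y$ both composites return $(((y)gf)h)\inv$ by inverse-preservation. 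This establishes naturality in each variable separately, hence naturality of $\psi$ as a transformation of bifunctors, and therefore the adjunction.

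The one step carrying real content — and the place I expect the only genuine difficulty to sit — is buried in the assertion that $\tilde f$ is a morphism of partial groups at all, i.e. that $(D)\tilde f^{*}\subseteq D(\LL)$ and that $\tilde f$ intertwines the two products. This is where the explicit combinatorial description of $(X,1)F$ must be used: a word of $D$ is an alternating string $(\dots,x,\hat x,x,\hat x,\dots)$ on a single $x\in X^*$ with copies of $1$ inserted, so $\tilde f$ carries it to a word $(\dots,(x)f,((x)f)\inv,(x)f,\dots)$ over $\LL$ (again with units interspersed); that such a word lies in $D(\LL)$ with the predicted $\Pi_\LL$-value is exactly axiom (4) of Definition \ref{def partial gr} together with the elementary cancellation properties of partial groups. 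If this has already been recorded in the course of proving Lemma \ref{free-part-gr-str-set}, the present lemma reduces to the purely formal manipulations above; otherwise it is the single point where one must actually touch the internal structure of the target $\LL$.
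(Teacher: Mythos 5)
Your proposal is correct and follows essentially the same route as the paper: define $\psi_{((X,1),\LL)}\colon f\mapsto\tilde f$, invoke the bijection (\ref{adjunction}), and verify naturality by evaluating both composites $gF\cdot\tilde f\cdot h$ and $\widetilde{g\cdot f\cdot hU}$ on $1$ and the generators, where each gives $((y)gf)h$. Your extra remark isolating the verification that $\tilde f$ is a morphism of partial groups (via the alternating-word description of $D$ and the cancellation axioms) is sound and matches what the paper disposes of before the lemma by appealing to the minimality of the domain of $(X,1)F$.
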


Note that neither $UF$ nor $FU$ are the identity functor; as a consequence even though $F$ preserves colimits (so we can push colimits in $Set^*$ to colimits in $Part$), we cannot use this fact to prove that $Part$ is cocomplete as there is a change of objects.

\section{Colimits in $Part$}
\label{section:colimits}

\subsection{Colimits in $Set^*$}\label{colim in Set-*}
Before discussing about colimits in $Part$, we shall study in detail colimits in the category $Set^*$. Recall that given a relation $R$ on a set $X$ there exists a unique smallest equivalence relation on $X$ containing $R$, exactly the intersection of all equivalence relations on $X$ containing $R$. In addition, the functions with domain $X$ that factor through the relation $R$ are exactly the functions factoring through the smallest equivalence relation containing $R$. In detail,
\begin{lemma}
	\label{propos:factor on relation}
	Let $f:X \rightarrow Y$ be a function between sets and $\sim'$ a binary relation on $X$, i.e. $\sim' \subseteq X \times X$. Let $\sim$ be the smallest equivalence relation on $X$ containing $\sim'$. Then $f$ factors through $X/ \sim$, as in the commutative diagram below, if and only if $(u)f = (v)f$ for every $u \sim' v$.
	\begin{center}
	\begin{tikzcd}
		f: &[-15pt] X \ar[r,"f"] \ar[d,"\pi"] & Y \\
					& \frac{X}{\sim} \ar[ru,dashed,"\exists ! \hat{f}"']
	\end{tikzcd}
	\end{center}
	
\begin{proof}
	Since $\sim' \subseteq \sim$, the \emph{only if} part is trivial. As for the \emph{if} part,
	sharing the same image by $f$ is an equivalence relation, that is we may define
	\[
	\forall u,v \in X, \; u \sim'' v \iff (u)f = (v)f.
	\]
	Then we have $\sim' \subseteq \sim''$ and, as $\sim$ is the smallest equivalence relation containing $\sim'$, also $\sim \subseteq \sim''$; clearly $f$ factors through $\sim''$, thus it also factors through $\sim$.
\end{proof}
\end{lemma}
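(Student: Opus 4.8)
The plan is to prove the two implications separately, with essentially all the work sitting in the \emph{if} direction. The \emph{only if} direction I would dispatch immediately: if $f$ factors as $f = \pi \cdot \hat{f}$ through the canonical projection $\pi : X \to X/\sim$, then for any $u \sim' v$ we have $u \sim v$ since $\sim' \subseteq \sim$, hence $(u)\pi = (v)\pi$ and therefore $(u)f = (u)\pi\,\hat{f} = (v)\pi\,\hat{f} = (v)f$.

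For the \emph{if} direction, the key idea is to pass through the ``same-$f$-value'' relation. I would define $u \sim'' v$ to mean $(u)f = (v)f$; reflexivity, symmetry and transitivity of $\sim''$ are inherited from equality on $Y$, so $\sim''$ is an equivalence relation on $X$. The hypothesis says precisely that $\sim' \subseteq \sim''$, and since $\sim$ is by definition the smallest equivalence relation containing $\sim'$, we conclude $\sim \subseteq \sim''$. Concretely this uses the description of $\sim$ as the intersection of all equivalence relations containing $\sim'$, recalled just before the statement.

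It then remains to observe that $\sim \subseteq \sim''$ is exactly the condition that $f$ is constant on each $\sim$-class, so the assignment $[x] \mapsto (x)f$ is a well-defined map $\hat{f} : X/\sim \;\to Y$ satisfying $\pi \cdot \hat{f} = f$; uniqueness of $\hat{f}$ follows from surjectivity of $\pi$. I do not anticipate any genuine obstacle here — the lemma is a soft, standard fact — the only point requiring a moment's care is to invoke the minimality of $\sim$ correctly, i.e. to notice that the ``same-value'' relation $\sim''$ is \emph{already} an equivalence relation, so that minimality of $\sim$ applies to it directly without having to take any further closure.
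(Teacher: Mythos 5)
Your proposal is correct and follows essentially the same route as the paper: both arguments introduce the ``same $f$-value'' equivalence relation $\sim''$, use the hypothesis to get $\sim' \subseteq \sim''$, invoke minimality of $\sim$ to conclude $\sim \subseteq \sim''$, and deduce the factorization. Your write-up merely makes explicit the well-definedness of $[x] \mapsto (x)f$ and the uniqueness of $\hat{f}$ via surjectivity of $\pi$, which the paper leaves implicit.
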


Turning our attention to colimits, let $\C$ be a small category and $F:\C \rightarrow Set^*$ a functor; we call such a functor a \emph{diagram} of type $\C$ in $Set^*$. For every $X \in Ob(\C)$ denote by $(X,x)$ its image under $F$; similarly for every $j \in Mor_{\C}(X,Y)$ denote by $jF$ the image of the morphism $j$ via $F$. In order to avoid an excessively elaborate notation, we will write disjoint unions of pointed sets, meaning by it the disjoint union of the sets and forgetting about the base-points, the outcome being a set (not pointed). Then one has
\begin{equation*}
\label{colimit}
\tag{$\star$}
colim(F) = \left( \dfrac{\displaystyle \bigsqcup_{X \in Ob(\C)} (X)F}{\sim}, [x] \right)
\end{equation*}
where $\sim$ is the smallest equivalence relation on $\bigsqcup_{X \in Ob(\C)} (X)F$ containing all the pairs $(x,y)$ and $(z,(z)jF)$ for all images $XF=(X,x), \, YF=(Y,y)$ and all $z \in (X,x)$ and $j \in Mor_{\C}(X,Y)$; here $[x]$ denotes the equivalence class of a (and therefore any) base-point $x$. The maps building a cocone to $colim(F)$ are defined as $i_X: (X,x) \fto colim(F), \, z \mapsto [z]$. Indeed consider a cocone $((R,r), t_X)$ from $F$ to $(R,r)$
\begin{center}
\begin{tikzcd}
	(X,x) \ar[rd,"i_X"] \ar[rrd,bend left=25,"t_X"] \\
		& colim(F) \ar[r,dashed,"\exists ! \psi"] & (R,r) \\
	(Y,y) \ar[ru,"i_Y"] \ar[rru,bend right=25,swap,"t_Y"]
\end{tikzcd}
\end{center}
and define the morphism $\psi$ by $[z] \overset{\psi}{\rightarrow} (z)t_X$ for $z \in (X,x)$.
In order to prove that $\psi$ is well-defined we need to show that it factors through $\sim$; in virtue of Lemma \ref{propos:factor on relation} we only need to prove $(z)t_X = ((z)jF)t_Y$ for any $z \in (X,x)$ and $j \in Mor_\C(X,Y)$, but this is trivially true by the definition of a cocone. As for the uniqueness, any other morphism $\varphi$ such that $i_X \cdot \varphi=t_X$ for every $X \in Ob(\C)$ satisfies $([z])\varphi= (z)i_X \cdot \varphi = (z)t_X$, thus $\varphi = \psi$.

\subsection{Factorization of the multivariable product}
\label{factoriz of multivar product}
We describe a possible attempt to prove cocompleteness of $Part$ through colimits of $Set^*$. If $\C$ is a small category, $a \in Ob(\C)$ and $T: \C \rightarrow Part$ a functor, denote images via $T$ by $aT=(aT, D(aT), \Pi_{a}, i_{a})$ and, for each morphism $x: a \rightarrow b$ in $\C$, its image under $T$ by $\lambda _x: aT \rightarrow bT$. Now consider the forgetful functor $U: Part \rightarrow Set^*$ and set $(\LL,1_\LL):=colim(TU)$ to be the colimit in the category of pointed sets. We wish to define a partial group structure over $\LL$ which makes it the colimit over $T$ in $Part$.\\
Since we will often refer to the following way of defining a domain, we provide a numbering for referencing reasons.
\begin{referencing}
\label{definition:of domain}
A natural way to define the domain $D \subseteq W(\LL)$ consists in taking $D$ as the set of words $[w]=([w_1], \dots, [w_n])$ for which there exist some $a \in Ob(\C)$ such that $w_i \in aT$ for all $i \in \{1, \dots, n\}$ and $(w_1, \dots, w_n) \in D(aT)$, where square brackets $[-]$ are used to denote the equivalence classes of the relation defining $\LL$. Equivalently, if $\rho$ is the projection over the quotient defining $\LL$, as in (\ref{colimit}), we may restrict $\rho^*$ to $\bigsqcup_{a \in Ob(\C)} D(aT)$, that is $(u_1, \dots,u_n)\rho^* = ((u_1)\rho, \dots,(u_n)\rho)$ for any word $(u_1,\dots,u_n) \in D(aT)$ and any $a \in Ob(\C)$: then $\displaystyle D=\left( \bigsqcup_{a \in Ob(\C)} D(aT) \right) \rho^*$.
\end{referencing}
Clearly $D$ satisfies condition (1) of Definition 1. One would now hopefully define a product $\Pi$ by lifting to products $\Pi_a$ of the partial groups $aT$ for suitable objects $a \in Ob(\C)$; for this purpose consider the following diagram
\begin{equation*}
\label{diagram-fact-multprod}
\tag{$\dagger$}
	\begin{tikzcd}
	\displaystyle \bigsqcup_{a \in Ob(\C)} D(aT) \ar[d,"\rho^*"] \ar[r,"\sqcup \Pi_a"] & \displaystyle \bigsqcup_{a \in Ob(\C)} aTU \ar[r,"\rho"] & (\LL,1_\LL) \\
	D \ar[rru, dotted, bend right=15, "\Pi"]
	\end{tikzcd}
\end{equation*}
One might now hope to factor $\rho^*$ via $\sqcup \Pi_a \cdot \rho$.
This argument would at least be complicated to prove and we will actually show it wrong. Indeed suppose we have $u=(u_1,\dots,u_k) \in D(aT)$ and $v=(v_1,\dots,v_h) \in D(bT)$ such that $u \rho^* = v \rho^*$, then we get that $u$ and $v$ have equal length and for all the components we have $[u_i] = [v_i]$. Now we need $[(u)\Pi_a] = [(v)\Pi_b]$, however since there is no guarantee that the components $u_i$ and $v_i$ are identified through a unique morphism of partial groups $aT \rightarrow bT$, we cannot use property (b) of Definition \ref{def morphisms part gr} to prove our factorization. \\
We will provide a counterexample showing how a construction carried as above may, in general, fail; in order to build a not too complicated counterexample we will need to split the formation of colimits according to the following result about categories.

\begin{proposition}
	\label{prop:co complete cat}
	Consider a locally small category $\C$; then every (co)limit is the (co)equalizer of a (co)product. In particular $\C$ is (finitely) (co)complete if and only if $\C$ admits all (finite) (co)products and all (co)equalizers.
	\begin{proof}
		See \cite[Chapter V, section 2]{MacLane:1971a}, where the author proves the statement for limits; the statement for colimits is obtained by duality.
	\end{proof}
\end{proposition}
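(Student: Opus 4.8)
\emph{Proof strategy.} The plan is to invoke the classical construction realizing an arbitrary (co)limit as a (co)equalizer of a pair of parallel maps between two (co)products; once this is in place, completeness of $\C$ reduces to the existence of (co)products and (co)equalizers, and the finite case follows because a finite diagram has only finitely many objects and morphisms.

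Concretely, I would fix a small category $\mathcal{J}$ and a diagram $G:\mathcal{J}\fto\C$, writing $G_j$ for the image of an object $j$ and $G_u:G_j\fto G_k$ for the image of a morphism $u:j\fto k$. Assuming $\C$ has the relevant products, I would form
\[
P:=\prod_{j\in Ob(\mathcal{J})}G_j,\qquad Q:=\prod_{(u:j\to k)\in Mor(\mathcal{J})}G_k,
\]
and let $\varphi,\psi:P\fto Q$ be the morphisms determined, for each $u:j\to k$, by requiring the $u$-th component of $\varphi$ to be the projection $\pi_k:P\fto G_k$ and the $u$-th component of $\psi$ to be $G_u\circ\pi_j$. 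First I would check that a cone over $G$ with apex $X$ is the same thing as a morphism $X\fto P$ equalizing $\varphi$ and $\psi$: such a morphism is a family $(f_j:X\fto G_j)_j$, and the equalizing condition says exactly that $f_k=G_u\circ f_j$ for all $u:j\to k$, which is the naturality condition for a cone. Consequently the equalizer $E$ of $(\varphi,\psi)$, equipped with the composites of $E\hookrightarrow P$ with the projections $\pi_j:P\fto G_j$, has the universal property of $\lim G$. Since $E$ exists as soon as $\C$ admits the two products and the equalizer of $\varphi$ and $\psi$, this shows that closure under products and equalizers forces completeness; the converse is immediate, products and equalizers being themselves limits.

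For the finite version, note that if $\mathcal{J}$ is a finite category then $Ob(\mathcal{J})$ and $Mor(\mathcal{J})$ are finite, so $P$ and $Q$ are finite products, while an equalizer is a limit over a finite category; hence finite completeness follows from the existence of finite products and of equalizers, and again the converse is clear. Dualizing throughout — coproducts for products, coequalizers for equalizers, cocones for cones — yields the statements for colimits and cocompleteness. I do not expect a genuine obstacle here: the only points deserving care are the bookkeeping of the size of the index sets (so that "$\C$ locally small together with all (finite) (co)products and (co)equalizers" really delivers the asserted (co)completeness) and the verification that the equalizing condition translates verbatim into the cone condition; both are routine. This is precisely the content of \cite[Chapter V, Section 2]{MacLane:1971a}.
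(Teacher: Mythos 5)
Your proposal is correct and is exactly the argument the paper relies on: the paper simply cites \cite[Chapter V, Section 2]{MacLane:1971a}, which contains precisely the product-over-objects/product-over-morphisms equalizer construction you spell out, with colimits obtained by duality. Nothing further is needed.
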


We start by showing that $Part$ has coproducts, since the factorization discussed above works in this case and fails when dealing with coequalizers.

\begin{lemma}
	\label{part has coproducts}
	The category $Part$ has all coproducts.
\begin{proof}
	Let $\C$ be a small, discrete category, $T$ a diagram of type $\C$ in $Part$, $U$ the forgetful functor $Part \fto Set^*$. Our aim is to show that there exists a partial group structure over the coproduct taken in $Set^*$, that is $colim(TU)$, which makes it the coproduct in $Part$. We will extensively refer to the construction described in Section \ref{factoriz of multivar product} and adopt the same notation. For every $a \in Ob(\C)$ set $\LL_a = (\LL_a,D(\LL_a),\Pi_a,i_a) := aT$ and write $1_a$ for its unit. From Section \ref{colim in Set-*} we obtain
	\[
	colim(TU) = \displaystyle \frac{\bigsqcup_{a \in Ob(\C)} (\LL_a)U}{\sim},
	\]
	where $\sim$ is the equivalence relation identifying exactly all the units $1_a$. Set also $\LL := colim(TU)$ and define the domain $D(\LL)$ of $\LL$, as well as the maps $\rho$ and $\rho^*$, as in \ref{definition:of domain}.	Let $u \in D(\LL_a)$ and $v \in D(\LL_b)$ be such that $(u)\rho^* = (v)\rho^*$, then either all the components of $u$ and $v$ are the unit element or $\LL_a = \LL_b$ and $u = v$. Hence, in this situation $\bigsqcup \Pi_a \cdot \rho$ factors through $\rho^*$; we take the resulting map $\Pi: D(\LL) \rightarrow \LL$ as the product on $\LL$.
	The inversion map $i$ on $\LL$ is defined by $([z])i = [(z)i_a]$ when $z \in \LL_a$; note that for $[z]=[1_a]$, $([1_a])i = [(1_a)i_a] = [1_a] = [1_b] = ([1_b])i$ for any $a,b \in Ob(\C)$, so $i$ is well-define. \\
	We have already seen in Section \ref{factoriz of multivar product} that axiom (1) of Definition \ref{def partial gr} holds; axiom (2) is, also, trivial. Suppose now that $u \circ v \circ w \in D(\LL)$, then we can lift via $\rho^*$ to a word in $D(\LL_a)$ for some $a \in \C$ and remap via $\sqcup \Pi_a \cdot \rho$ to $\LL$; by the commutativity of (\ref{diagram-fact-multprod}) we obtain $(u \circ v \circ w)\Pi$. Since $\LL_a$ is a partial group, axiom (3) holds for the lifting of $u \circ v \circ w$ and, therefore, holds also in $\LL$. Similarly also axiom (4) holds.\\
	We have natural inclusion morphisms $j_a: \LL_a \rightarrow \LL$, given by $(x)j_a = [x]$, which are morphisms of partial groups and build a cocone $(\LL, j_a)$ from $T$. Then $(\LL, j_a)$ is the coproduct; indeed consider another cocone $(\M, f_a)$ from $T$, with $\M=(\M,D(\M), \Pi_\M, i_\M)$,
	\begin{center}
	\begin{tikzcd}
		\LL_a \ar[rd, "j_a"] \ar[rrd,bend left=20, "f_a"] \\
			&	\LL \ar[r, dashed, "\exists ! \psi"] & \mathcal{M} \\
		\LL_b \ar[ru, "j_b"] \ar[rru,bend right=20, "f_b"]
	\end{tikzcd}
	\end{center}
Then all we need to show is that the pointed-set morphism $\psi$ defined in Section \ref{colim in Set-*}, acting as $([x])\psi = (x)f_a$ for $x \in \LL_a$, is a morphism of partial groups. Note that $D(\LL_a)f_a \subseteq D(\mathcal{M})$, so that also $D(\LL) \subseteq D(\mathcal{M})$; if $u=([u_1], \dots, [u_k]) \in D(\LL)$, then every class $[u_k]$, with $u_k$ not a unit of some $\LL_a$, is a singleton, whereas the units form all together a single class. Thus all the $u_k$ appearing in $u$ which are not a unit must all be elements of the same $\LL_a$ for some $a \in Ob(\C)$. By picking the units to be $1_a$, we can lift $u$ to a word $(u_1, \dots, u_n) \in D(\LL_a)$. Consider now the following diagram
\begin{center}
\begin{tikzcd}
D(\LL) \ar[d,"\psi^*"] \ar[r,"\Pi"] &	\LL \ar[d,"\psi"] \\
D(\M) \ar[r,"\Pi_\M"] & 	\M \\
D(\LL_a) \ar[u,"f_a^*"] \ar[r,"\Pi_a"] &	\LL_a \ar[u,"f_a"]
\end{tikzcd}
\end{center}
where the bottom square is commutative since $f_a$ is a morphism of partial groups.
Now $(u)\Pi \cdot \psi = (u_1,\dots,u_n) \Pi_a \cdot \rho \cdot \psi = ([(u_1,\dots,u_n)\Pi_a])\psi$. On the other hand
\[
(u)\psi^* \cdot \Pi_\M = (([u_1])\psi,\dots,([u_n])\psi)\Pi_\M = ((u_1)f_a, \dots, (u_n)f_a) \Pi_\M = (u_1,\dots,u_n)f_a^* \cdot \Pi_\M.
\]
Since $f_a=j_a \cdot \psi$, we have 
\[
(u)\psi^* \cdot \Pi_\M  = (u_1,\dots,u_n)\Pi_a \cdot f_a = (u_1,\dots,u_n)\Pi_a \cdot j_a \cdot \psi = (u_1,\dots,u_n)j_a^* \cdot \Pi \cdot \psi = ([(u_1,\dots,u_n)\Pi_a])\psi,
\]
so also the upper square is commutative. This completes the proof.
\end{proof}
\end{lemma}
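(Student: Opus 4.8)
The plan is to realise the coproduct concretely: take the coproduct of the underlying pointed sets in $Set^*$, endow it with a partial group structure along the lines of Section~\ref{factoriz of multivar product}, and then check the universal property directly. So let $\C$ be small and discrete and $T\colon \C \fto Part$ a diagram; write $\LL_a=(\LL_a,D(\LL_a),\Pi_a,i_a):=aT$ with unit $1_a$, and let $\LL:=colim(TU)$ be the coproduct in $Set^*$, which by Section~\ref{colim in Set-*} is $\bigsqcup_a \LL_a$ modulo the relation $\sim$ identifying precisely the units $1_a$, pointed at the common class. First I would define the domain $D(\LL)\subseteq W(\LL)$ and the map $\rho^*$ exactly as in~\ref{definition:of domain}, so that axiom~(1) of Definition~\ref{def partial gr} is automatic, and define inversion by $([z])i=[(z)i_a]$ for $z\in\LL_a$, which is well defined since the unit class is sent to itself.

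The key step is to obtain the product $\Pi\colon D(\LL)\fto\LL$ by factoring $\sqcup\Pi_a\cdot\rho$ through $\rho^*$ in diagram~\ref{diagram-fact-multprod}; here discreteness of $\C$ is exactly what saves the construction. Indeed, if $u\in D(\LL_a)$ and $v\in D(\LL_b)$ satisfy $u\rho^*=v\rho^*$, then comparing entries and using that the only non-singleton $\sim$-class is the class of the units, either every entry of both $u$ and $v$ is a unit — whence $(u)\Pi_a$ and $(v)\Pi_b$ are units and so $\sim$-equivalent — or $\LL_a=\LL_b$ and $u=v$; in either case $[(u)\Pi_a]=[(v)\Pi_b]$, so the factorisation exists. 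This is precisely the point that will break for coequalizers, where $\sim$ is far more complicated and the obstruction noted in Section~\ref{factoriz of multivar product} cannot be circumvented. Axiom~(2) is trivial, and axioms~(3) and~(4) follow by lifting a word of $D(\LL)$ along $\rho^*$ to a word of some $D(\LL_a)$, applying the corresponding axiom in $\LL_a$, and pushing forward via $\sqcup\Pi_a\cdot\rho$ using commutativity of~\ref{diagram-fact-multprod}.

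It then remains to check that the canonical maps $j_a\colon\LL_a\fto\LL$, $x\mapsto[x]$, are partial group morphisms — which is immediate from the definitions of $D(\LL)$ and $\Pi$ — so that $(\LL,j_a)$ is a cocone, and that this cocone is universal. Given any cocone $(\M,f_a)$ into a partial group $\M$, the $Set^*$-coproduct already supplies a unique pointed-set map $\psi\colon\LL\fto\M$ with $([x])\psi=(x)f_a$, so the only thing to verify is that $\psi$ is a morphism of partial groups. The crucial observation is that any word $([u_1],\dots,[u_k])\in D(\LL)$ has all its non-unit entries in a single $\LL_a$ (distinct $\LL_a$ overlap only in their units), so, taking the unit entries to be $1_a$, it lifts to a word of $D(\LL_a)$; then $D(\LL_a)f_a^*\subseteq D(\M)$ gives condition~(a) of Definition~\ref{def morphisms part gr}, and condition~(b) follows by a diagram chase through the product squares for $f_a$ and $j_a$ together with $f_a=j_a\cdot\psi$. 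Uniqueness of $\psi$ is inherited from the $Set^*$-coproduct. I expect the well-definedness of $\Pi$ in the second paragraph to be the only genuinely delicate point; the rest is bookkeeping with the explicit description of $\LL$.
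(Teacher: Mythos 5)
Your proposal is correct and follows essentially the same route as the paper: realise the coproduct on the $Set^*$-colimit, define $D(\LL)$ as in \ref{definition:of domain}, factor $\sqcup\Pi_a\cdot\rho$ through $\rho^*$ using the fact that only the units are identified, and verify the universal property by lifting each word of $D(\LL)$ into a single $D(\LL_a)$. The only difference is cosmetic (you spell out slightly more explicitly the all-units case of the factorisation), so nothing further is needed.
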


\begin{remark}
	Note that coproducts of a finite number of finite partial groups are again finite; this will translate, after proving existence of coequalizers, in the fact that the full subcategory of $Part$ of finite partial groups is finitely cocomplete. This is probably among the major differences between the categories $Part$ and $Grp$.
\end{remark}

\subsection{Coequalizers of partial groups} \label{coequaliz partial grous}
We now begin our analysis of coequalizers; in order to keep notation simple, we omit writing the application of the forgetful functor $U: Part \fto Set$ whenever it is clear by the context whether we are considering a partial group or just its underlying set.\\
Consider two partial groups $\LL=(\LL,D(\LL), \Pi_\LL, j)$ and $\LL'=(\LL',D(\LL'),\Pi, i)$ and two parallel morphisms of partial groups $f,g: \LL \rightarrow \LL'$. The setwise coequalizer is given by
\[
C:=coeq_{Set}(f,g) = \frac{\LL'}{\sim}
\]
paired with the quotient map $q: \LL' \rightarrow (\LL' / \sim) $, where $\sim$ is the smallest equivalence relation on $\LL'$ containing the pairs $(xf,xg)$ for $x \in \LL$. This is a simplified realization of the same object constructed in $\eqref{colimit}$; according to this we are supposed to consider $(\LL \sqcup \LL')/\mathcal{R}$, with $\mathcal{R}$ the smallest equivalence relation containing all pairs $(x,xf)$ and $(x,xg)$ for $x \in \LL$. In particular $\mathcal{R}$ identifies all $xf$ with $xg$; it is a simple exercise to prove that there is a bijection
\[
\frac{\LL'}{\sim} \cong \frac{\LL \sqcup \LL'}{\mathcal{R}}
\]
which is the identity over representatives in $\LL'$. \\
We try to define a partial group structure on $C$; for the purpose, let $q: \LL' \longrightarrow C$ be the canonical projection and $D=D(C)$ be defined as in Section \ref{definition:of domain}, so in this case as the set of words $\ov{w}$ which are the pointwise projection via $q$ of a word $w \in D(\LL')$. A word of caution: the element $\ov{w}$ is not an equivalence class with representative $w$, but a word whose single entries are elements of $C$, so equivalence classes with representatives in $\LL'$. We can picture the situation through the following diagram
\begin{center}
\begin{tikzcd}
	D(\LL') \ar[d,"\Pi"] \ar[r,"q^*"]	& D(C) \ar[d,dashed, "\pi"] \\
	\LL' \ar[r,"q"]		& C
\end{tikzcd}
\end{center}
Commutativity of this diagram is a necessary condition since $q$ has to be a morphism of partial groups. Thus the multivariable product $\pi: D(C) \rightarrow C$ of $C$ has to be defined as $[w]\pi=(w)\Pi q$; similarly, the inversion $l$ on $C$ must be given by $[w]l=(w)i q^*$.\\
That $l$ is well-defined is an immediate consequence of the fact that $f$ and $g$ are partial group morphisms, so they send inverses to inverses; the situation with $\pi$ is instead much more complex. The possibility of factorizing $\Pi q$ via the map $q^*$ is equivalent to proving the implication $(u)q^* = (v)q^* \implies u\Pi \sim v\Pi$, where $u,v \in D(\LL')$. The relation $\sim$ is generated by the weaker relation $\sim'$ made exactly of the pairs $(xf,xg)$ for $x \in \LL$; being $fq=gq$ we get
\[
(xf)\Pi q = (x\Pi_\LL)fq  = (x\Pi_\LL)g q = (xg)\Pi q,
\]
that is $(xf)\Pi \sim (xg)\Pi$. Combining with Proposition \ref{propos:factor on relation} we obtain that $\pi$ is well defined on words of length 1.\\
Problems arise when considering words of length greater than 1; if we can write $u=xf^*=(x_1f, \dots, x_nf)$ and $v=xg^*=(x_1g, \dots, x_ng)$ for $x=(x_1,\dots,x_n) \in D(\LL)$, then we clearly get
\[
u \Pi = (x)f^* \Pi= x \Pi_\LL f \sim x \Pi_\LL g = (x)g^* \Pi = v \Pi. 
\]
However if $u=(u_1, \dots, u_n)$ and $v=(v_1, \dots, v_n)$ are words in $D(\LL')$ such that $uq^*=vq^*$, we only know that $u_i \sim v_i$ for each $1 \le i \le n$. So even if we are able to represent $uq^*$ through an element of the form $\tilde{u}=((x_i) h_i)$, where $h_i=f$ for some $i$ and $h_j=g$ for $j \ne i$, there is no guarantee that $u\Pi \sim \tilde{u} \Pi$. To better understand the obstruction to our factorization suppose $n=2$, so $u=(u_1,u_2)$ and $v=(v_1,v_2)$, with $u_1=xf$, $v_1=xg$, $u_2=yf$ and $v_2=yg$. Clearly the pairs $(xf,yf), (xf,yg), (xg,yg)$ and $(xg,yf)$ are all representatives of the same word in $D(C)$; we have already observed that $(xf,yf)\Pi \sim (xg,yg)\Pi$, however there is no evident reason, for example, for $(xf,yf)\Pi \sim (xf,yg)\Pi$. The counterexample that follows is constructed by following precisely the same line of reasoning.\\

\begin{counterex}
\label{counterex to factoriz}
Consider the pointed set $(A,1)$ with $A= \{ 1,a,b \}$ and let $\LL$ be the free partial group on $(A,1)$; recall that $\LL = \{1, a, \hat{a}, b, \hat{b} \}$. Consider the Klein group $\displaystyle M= \frac{\mathbb{Z}}{2 \mathbb{Z}} \times \frac{\mathbb{Z}}{2 \mathbb{Z}}$ ($(M,1)$ as a pointed-set) with generators $(x,1)=x$ and $(1,y)=y$ and define the pointed-set functions
\begin{center}
	\begin{tikzcd}
	\tilde{f} : &[-25pt] (A,1) \ar[r] & M \\ [-15pt]
	& a \ar[r,mapsto]  & x  \\ [-20pt]
	& b \ar[r, mapsto] & y
	\end{tikzcd}
\end{center}
\begin{center}
	\begin{tikzcd}
	\tilde{g} : &[-25pt] (A,1) \ar[r] & M \\ [-15pt]
	& a \ar[r,mapsto]  & xy  \\ [-20pt]
	& b \ar[r, mapsto] & x
	\end{tikzcd}
\end{center}
Then we get induced morphisms of partial groups $f,g: \LL \fto M$ making the diagram below commutative
\begin{center}
	\begin{tikzcd}
	(A,1) \ar[rr,hook] \ar[dr,shift left=.80,"\tilde{g}"] \ar[dr,shift right=.80,"\tilde{f}"'] &  & \LL \ar[dl,shift right=.80,"f"] \ar[dl,shift left=.80,"g"'] \\
	 & M
	\end{tikzcd}
\end{center}
We can therefore consider the coequalizer (taken in $Set^*$, i.e. after application of the forgetful functor) of morphisms $f$ and $g$
\begin{center}
	\begin{tikzcd}
	\LL \ar[r,shift left=.20,"f"] \ar[r,shift right=.80,swap,"g"] & M \ar[r,"q"]	& \frac{M}{\sim} \\
	\end{tikzcd}
\end{center}
with $(M/\sim \;, q)=coeq_{Set^*}(f,g)$. If $\Pi$ is the multivariable product on $M$, we want to show that $\Pi q$ does not factor via $q^*$ to a product of $M/\sim$. First of all note that
\begin{align*}
x=af \sim ag = xy; \\
x=bg \sim bf = y; \\
\end{align*}
Since $af=\hat{a}f$, $bf=\hat{b}f$ and similarly for $g$, we get that $\sim$ identifies exactly the non-identity elements of $M$, leaving us with $\displaystyle \frac{M}{\sim} \cong \frac{\mathbb{Z}}{2 \mathbb{Z}}$, which has a unique structure of partial group coinciding with that of the group. As words $(af,bf)$ and $(af,bg)$ have equal image under $q^*$, they represent the same word on $M /\sim$. However
\[
(af,bf)\Pi=(x,y)\Pi = xy,
\]
\[
(af,bg)\Pi = (x,x)\Pi = x^2=1,
\]
but $1 \not \sim xy$ in $M / \sim$.
\end{counterex}

\subsection{The category $Part$ is cocomplete} \label{cocompleteness}
The situation depicted by the above counterexample is analogous to that happening in the category of groups and group homomorphisms. When building coequalizers in $Grp$, we need to factor out not only those relations afforded by the morphisms in the diagram (namely $f$ and $g$), but the smallest \emph{normal subgroup} containing those relations. Similarly, when considering a coequalizer in $Part$, we need to quotient over a relation which allows the multivariable product to factor through the quotient itself. The difficulties of dealing with partial groups come from the lack of a substructure which controls this factorization process in the same way as normal subgroups control factor groups; therefore we need to identify the correct equivalence relation in a different way.\\
Let's consider any equivalence relation $\sim$ defined on the underlying set of a partial group $\LL'$; let $q: \LL' \fto \frac{\LL'}{\sim}$ be the canonical projection and define $D \left( \frac{\LL'}{\sim} \right)$ as in Section \ref{coequaliz partial grous}. Then the fibers of the map $q^*: D(\LL') \fto D \left( \frac{\LL'}{\sim} \right) $ induce an equivalence relation $\equiv$ on $D(\LL')$ and clearly $q^*$ is surjective being $q$ surjective. For $u=(u_i)_i, \, v=(v_i)_i \in D(\LL')$, the relation $\equiv$ is given by
\begin{equation*}
\label{first eq rel charact}
\tag{1}
u \equiv v \iff u,v \in (u)(q^*)\inv \iff u_i \sim v_i \; \forall i,
\end{equation*}
in particular $u$ and $v$ have equal length. Hence we get
\begin{center}
	\begin{tikzcd}
	D(\LL') \ar[r,"q^*"] \ar[d,"\Pi"] & D \left( \frac{\LL'}{\sim} \right) = \frac{D(\LL')}{\equiv} \ar[d,dashed] \\
	\LL' \ar[r,"q"] & \frac{\LL'}{\sim}
	\end{tikzcd}
\end{center}
The map $\Pi q$ is also surjective, so we can consider the equivalence relation $\approx$ on $D(\LL')$ afforded by fibers of $\Pi q$ and obtain $\frac{\LL'}{\sim} \cong \frac{D(\LL')}{\approx}$. The square diagram above then becomes
\begin{center}
	\begin{tikzcd}
	D(\LL') \ar[r,"q^*"] \ar[dr,"\Pi q"'] & D \left( \frac{\LL'}{\sim} \right) = \frac{D(\LL')}{\equiv} \ar[d,dashed] \\
	 & \frac{\LL'}{\sim} = \frac{D(\LL')}{\approx}
	\end{tikzcd}
\end{center}
so that $\Pi q$ factors through $q^*$ if and only if  $\, \equiv \, \subseteq \, \approx$. \\
We are then led to search for an equivalence relation $\sim$ on $\LL'$ such that the induced relations $\equiv$ and $\approx$ satisfy $\, \equiv \, \subseteq \, \approx$; we would then consider such a smallest equivalence relation as a candidate for our colimit. Let's also describe $\approx$ in terms of $\sim$;
\begin{equation*}
\label{second eq rel charact}
\tag{2}
u \approx v \iff (u)\Pi q = (v) \Pi q \iff u\Pi \sim v\Pi.
\end{equation*}
Combining \eqref{first eq rel charact} and \eqref{second eq rel charact}, we need to find $\sim$ such that, for $u=(u_i)_i, v=(v_i)_i \in D(\LL')$:
\begin{equation*}
\label{condition of sim}
\tag{*}
u_i \sim v_i \, \forall i \implies u\Pi \sim v\Pi.
\end{equation*}
Let now $\sim_0$ be the equivalence relation on $\LL'$ generated by the pairs of the form $(xf, xg)$ for some $x \in \LL$ (this is the equivalence relation affording the coequalizer in $Set$); clearly the set
\[
E :=\{ \Re \subseteq \LL' \times \LL' \mid \Re \text{ is an eq. rel. containing $\sim_0$ and satisfying \eqref{condition of sim}} \}
\]
is not empty, since it contains $\LL' \times \LL'$. Since inclusion of $\sim_0$ and property $\eqref{condition of sim}$ are preserved by intersections, there exists the smallest equivalence relation we are searching for, namely
\[
\R = \bigcap_{\Re \in E} \Re.
\]

\begin{lemma}
	\label{Part has coeq}
	The category $Part$ has all coequalizers.
\begin{proof}
	Consider morphisms between partial groups $f,g: \LL \fto \LL'$, where $\LL=(\LL,D(\LL),\Pi_\LL,i_\LL)$ and $\LL'=(\LL', D(\LL'), \Pi, i)$; build the setwise coequalizer $\frac{\LL'}{\sim}$ and the relation $\R$ (which depends on $\sim$) on $\LL'$ as above. Consider the projection $t: \LL' \fto \frac{\LL'}{\R}$ and the diagram
	\begin{center}
		\begin{tikzcd}
		\LL \ar[r,shift left=1.2,"f"] \ar[r,shift right=.2,swap, "g"] & \LL' \ar[r,"t"] & \frac{\LL'}{\R}
		\end{tikzcd}
	\end{center}
Then we prove that: 
\begin{enumerate}
	\item $\LL'$ induces a structure of partial group on $\frac{\LL'}{\R}$;
	\item $t$ is a morphism of partial groups and $ft=gt$;
	\item $\left( \frac{\LL'}{\R} , t \right) = coeq_{Part}(f,g)$.
\end{enumerate}
\begin{enumerate}
	\item Define $D:=D(\LL' / \R)$ as in \ref{definition:of domain}, that is as the set of images of elements of $D(\LL')$ via the map $t^*: W(\LL') \fto W(\LL' / \R)$. By construction $\R$ is the smallest equivalence relation on $\LL'$ such that $\Pi t$ factors as in the following commutative diagram:
	\begin{equation*}
	\label{diagram-defining-pi} \tag{$\top$}
	\begin{tikzcd}
	D(\LL') \ar[r,"t^*"] \ar[d,"\Pi"] & D \left( \frac{\LL'}{\R} \right) \ar[d,dashed, "\pi"] \\
	\LL' \ar[r,"t"] & \frac{\LL'}{\R}
	\end{tikzcd}
	\end{equation*}
	For any $x \in \LL'$ set $[x] \in \LL'/\R$ to be the class with representative $x$ and let $\pi$ be the (unique) map that realizes such factorization, that is $\Pi t = t^* \pi$. This means that for any word $\ov{u}=([u_1],\dots,[u_n]) \in D$ the representatives $u_1, \dots, u_n$ may be chosen so that $(u_1,\dots,u_n)\in D(\LL)$ and then we have $(\ov{u})\pi = (u_1,\dots,u_n)\Pi t$.\\
	If $i$ is the inversion map on $\LL'$, define an inversion map $j$ on $\LL'/\R$ by $([x] )j := [(x)i]$. Since $\R$ is by definition the smallest equivalence relation containing $\sim$ and such that \eqref{condition of sim} holds, then it is generated by the pairs $(x,y) \in \LL' \times \LL'$ with either $x=zf$ and $y=zg$ for some $z \in \LL$ or $x=u\Pi$ and $y=v\Pi$ for some words $u=(u_k)_k, \, v=(v_k)_k \in D(\LL')$ of equal length with $u_k \sim v_k$ for every $k$. By Lemma \ref{propos:factor on relation} we only need to check that $j$ is constant over such pairs. In the first case, since $f$ and $g$ are morphisms of partial groups we get $(x)i = ((z)i_\LL)f$ and $(y)i=((z)i_\LL)g$, so $((x)i , (y)i) \in \R$; in the other case, since inverting $(u_k)$ and $(v_k)$ consists in taking inverses in opposite order, we have $(u\Pi)i = ((u)i)\Pi \enspace \R \enspace ((v)i)\Pi = (v\Pi)i$. Thus $j$ is well defined. \\
	We need to prove that $(\LL'/\R, D,\pi,j)$ is a partial group. Axiom (1) of Definition \ref{def partial gr} is an immediate consequence of the definition of $D$. For any $x \in \LL'$ we have $([x])\pi = [(x)\Pi] = [x]$, which is axiom (2). Suppose now to have a word $\ov u \circ \ov v \circ \ov w \in D$; then there exists a lift of it, say $u \circ v \circ w \in D(\LL')$, via $t^*$. As axiom (3) holds for $\LL'$, $u \circ v\Pi \circ w \in D(\LL')$ and $(u \circ v\Pi \circ w)\Pi = (u \circ v \circ w)\Pi$; but we also have $(u \circ v\Pi \circ w)t^* = \ov u \circ \ov v \pi \circ \ov w$, so this is an element of $D$ and $(\ov u \circ \ov v \pi \circ \ov w)\pi = (u \circ v\Pi \circ w)t^* \pi = (u \circ v\Pi \circ w)\Pi t = (u \circ v \circ w)\Pi t = (u \circ v \circ w)t^* \pi = (\ov u \circ \ov v \circ \ov w)\pi$. This is axiom (3) for $\LL'/\R$. Let now $\ov u \in D$ and $u \in D(\LL')$ be a lift of $\ov u$ via $t^*$, then $(u)i \circ u \in D(\LL')$ and $((u)i \circ u)\Pi = 1_{\LL'}$, yielding $(\ov u)j \circ \ov u = (u)it^* \circ (u)t^* = ((u)i \circ u)t^* \in D$ and $((\ov u)j \circ \ov u)\pi = (((u)i \circ u)t^*)\pi = ((u)i \circ u)\Pi t = (1_{\LL'})t = 1_{\LL'/\R}$, therefore, showing that also axiom (4) holds.
	\item By definition of $D$ and $t$, we have $(D(\LL'))t^* = D$; moreover the product $\pi$ is defined so that the square (\ref{diagram-defining-pi}) is commutative. This shows that $t$ is a morphism of partial groups. As $\sim \, \subseteq \, \R$, we also have $ft=gt$.
	\item If $\tau: \LL' \fto \M=(\M,D(\M),\Pi_\M,i_\M)$ is a partial group homomorphism such that $f\tau= g\tau$, define $\psi$ as in the diagram below by:
	\[
	\psi: [x] \mapsto (x)\tau
	\]
	\begin{center}
		\begin{tikzcd}
		\LL \ar[r,shift left=1.2,"f"] \ar[r,shift right=.2,swap, "g"] & \LL' \ar[r,"t"] \ar[d,"\tau"] & \frac{\LL'}{\R} \ar[dl, dashed, "\psi"] \\
				& \M
		\end{tikzcd}
	\end{center}
	If $\psi$ is well defined, then it is clearly unique such that the diagram above commutes. Consider now the following diagram, which is commutative since $\tau$ is a morphism of partial groups.
	\begin{equation*}
		\begin{tikzcd}
		D(\LL') \ar[r,"\tau^*"] \ar[d,"\Pi"] & D (\M) \ar[d, "\Pi_\M"] \\
		\LL' \ar[r,"\tau"] & \M
		\end{tikzcd}
		\end{equation*}
	Let $\R_{\tau}$ be the equivalence relation induced by the fibers of $\tau$ on $\LL'$; it induces equivalence relations $\equiv_{\tau}$ and $\approx_{\tau}$ on $D(\LL')$ via fibers of, respectively, $\tau^*$ and $\Pi \tau$. The commutativity of the diagram above yields the fact that $\R_{\tau}$ fulfills \eqref{condition of sim}. Since $f\tau=g\tau$ we also have that $\R_\tau$ contains all the pairs $(xf,xg)$ with $x \in \LL$. $\R$ being the smallest equivalence relation on $\LL'$ satisfying these two properties, we have $\R \subseteq \R_{\tau}$, which means exactly that $\psi$ is well defined.
\end{enumerate}
\end{proof}
\end{lemma}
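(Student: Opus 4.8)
The plan is to realise the coequalizer of a pair $f,g\colon\LL\fto\LL'$ in $Part$ not as the set-theoretic coequalizer $\LL'/{\sim_0}$ (with $\sim_0$ the equivalence relation generated by the pairs $(xf,xg)$, $x\in\LL$) --- which, as Counterexample \ref{counterex to factoriz} shows, need not carry a partial group structure making the quotient map a morphism --- but as a quotient of $\LL'$ by a suitably larger equivalence relation, enlarged exactly enough that the multivariable product descends. The relevant relation is the smallest equivalence relation $\R$ on $\LL'$ that contains $\sim_0$ and, in addition, satisfies condition \eqref{condition of sim}: whenever $u=(u_k)_k,\,v=(v_k)_k\in D(\LL')$ have $u_k\mathrel{\R}v_k$ for all $k$, then $(u)\Pi\mathrel{\R}(v)\Pi$. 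Such a relation exists because the equivalence relations with these two properties form a nonempty family (it contains $\LL'\times\LL'$) closed under arbitrary intersection, so $\R$ is their intersection. Put $C:=\LL'/\R$ with canonical projection $t\colon\LL'\fto C$, and define $D(C):=(D(\LL'))t^*$ as in \ref{definition:of domain}.

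First I would put a partial group structure on $C$. Condition \eqref{condition of sim} for $\R$ is precisely the hypothesis of Lemma \ref{propos:factor on relation} that lets $\Pi t\colon D(\LL')\fto C$ factor through $t^*$, giving a unique product $\pi\colon D(C)\fto C$ with $t^*\pi=\Pi t$; concretely $(\ov{u})\pi=(u)\Pi t$ for any lift $u\in D(\LL')$ of $\ov{u}\in D(C)$. The inversion on $C$ is forced to be $j\colon[x]\mapsto[(x)i]$, which is well defined because $\R$ is compatible with $i$: the relation $\R'$ on $\LL'$ given by $x\mathrel{\R'}y\Leftrightarrow(x)i\mathrel{\R}(y)i$ again contains $\sim_0$ and satisfies \eqref{condition of sim} (using that $f$ and $g$ respect inverses and that the product is compatible with componentwise inversion up to reversal of the word), so $\R\subseteq\R'$ by minimality. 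Then I would verify axioms (1)--(4) of Definition \ref{def partial gr} for $(C,D(C),\pi,j)$: (1) and (2) are immediate from the definitions of $D(C)$ and $\pi$, while (3) and (4) are diagram chases --- lift a word of $D(C)$ along $t^*$ to $D(\LL')$, apply the axiom there, and push the conclusion back down, repeatedly using $t^*\pi=\Pi t$. Finally $(D(\LL'))t^*=D(C)$ and commutativity of the square defining $\pi$ make $t$ a morphism of partial groups, and $\sim_0\subseteq\R$ gives $ft=gt$.

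It remains to check the universal property. Given $\tau\colon\LL'\fto\M$ in $Part$ with $f\tau=g\tau$, the only candidate factorization is $\psi\colon C\fto\M$, $[x]\mapsto(x)\tau$, and this is well defined exactly when $\R$ is contained in the fiber relation $\R_\tau$ of $\tau$ on $\LL'$. But $\R_\tau$ contains $\sim_0$ (because $f\tau=g\tau$), and it satisfies \eqref{condition of sim} because $\tau$ is a morphism of partial groups: if $u_k\mathrel{\R_\tau}v_k$ for all $k$ then $(u)\tau^*=(v)\tau^*$, hence $(u)\Pi\tau=(v)\Pi\tau$, i.e. $(u)\Pi\mathrel{\R_\tau}(v)\Pi$. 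By minimality of $\R$, $\R\subseteq\R_\tau$, so $\psi$ is well defined; it is then a morphism of partial groups by chasing the squares relating $\pi$ and $\Pi_\M$, and is clearly the unique morphism with $t\psi=\tau$.

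The main obstacle, I expect, is not any single computation but the two things that make the construction work at all: first, realising that the set-theoretic quotient is wrong and pinning down condition \eqref{condition of sim} as the exact extra requirement, together with the observation that such relations are closed under intersection so that a smallest one exists; and second, the well-definedness of the product $\pi$ and the inversion $j$ on $C$ --- this is the step where a partial group structure could fail to exist, and it is exactly here that Lemma \ref{propos:factor on relation} and the minimality of $\R$ do the real work. The remaining verifications (the partial group axioms for $C$ and the universal property) are lengthy but routine once $\R$ has been identified.
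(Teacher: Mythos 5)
Your proposal is correct and follows essentially the same route as the paper: the same minimal relation $\R$ (containing $\sim_0$ and closed under condition \eqref{condition of sim}, obtained by intersection), the same factorization of $\Pi t$ through $t^*$ via Lemma \ref{propos:factor on relation}, and the same universal-property argument via the fiber relation $\R_\tau$ and minimality of $\R$. The only cosmetic difference is that you establish well-definedness of the inversion by pulling $\R$ back along $i$ and invoking minimality, where the paper checks $j$ directly on the generating pairs of $\R$; both verifications amount to the same computation.
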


We finally obtain
\begin{theorem}
	\label{cocompl-of-part}
	The category $Part$ is cocomplete.
	\begin{proof}
		Combine Theorems \ref{part has coproducts}, \ref{Part has coeq} and Proposition \ref{prop:co complete cat}.
	\end{proof}
\end{theorem}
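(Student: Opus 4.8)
The plan is to deduce the theorem from the general reduction recorded in Proposition~\ref{prop:co complete cat}: a locally small category is cocomplete exactly when it has all small coproducts and all coequalizers. So the argument splits into three parts, two of which are already available.

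First I would check that $Part$ is locally small. By Definition~\ref{def morphisms part gr} a morphism of partial groups $\LL \fto \LL'$ is in particular a set-map $\LL \fto \LL'$ satisfying conditions (a) and (b); hence $Hom_{Part}(\LL,\LL')$ is a \emph{subset} of the set $(\LL')^{\LL}$ of all set-maps, and therefore a set. Thus Proposition~\ref{prop:co complete cat} applies with $\C = Part$, and cocompleteness of $Part$ is equivalent to the existence in $Part$ of all small coproducts together with all coequalizers.

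The remaining two ingredients are exactly Lemmas~\ref{part has coproducts} and~\ref{Part has coeq}: the former produces coproducts of an arbitrary small family of partial groups (its proof is carried out for an arbitrary small discrete index category), and the latter produces coequalizers of any parallel pair. Concretely, given a small category $\C$ and a diagram $T\colon \C \fto Part$, one builds $colim(T)$ in the usual way as the coequalizer of the two evident morphisms $\coprod_{x \in Mor(\C)} (\mathrm{dom}\,x)T \rightrightarrows \coprod_{a \in Ob(\C)} aT$; since $\C$ is small, $Ob(\C)$ and $Mor(\C)$ are sets, so both coproducts exist by Lemma~\ref{part has coproducts} and the coequalizer exists by Lemma~\ref{Part has coeq}. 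Hence $colim(T)$ exists in $Part$, so $Part$ is cocomplete.

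I do not expect a real obstacle at this stage: the genuine difficulty — namely, for a parallel pair $f,g\colon \LL \fto \LL'$, identifying the smallest equivalence relation $\R$ on $\LL'$ through which the multivariable product descends (Condition~\eqref{condition of sim}), and the analogous subtlety for the underlying pointed sets treated in Section~\ref{colim in Set-*} — has already been overcome in Lemma~\ref{Part has coeq}, and what is left here is pure assembly via Proposition~\ref{prop:co complete cat}. The one point worth stating carefully is that Lemma~\ref{part has coproducts} gives coproducts over \emph{all} small index sets, not merely finite ones, so that the reduction really yields all small colimits; the finite-cocompleteness half of Theorem~\ref{theoremA} for $FinPart$ is then obtained separately from the observation that coproducts of finitely many finite partial groups, and coequalizers of finite partial groups, remain finite.
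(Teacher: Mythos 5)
Your proposal is correct and follows exactly the paper's route: cocompleteness is deduced by combining the existence of coproducts (Lemma \ref{part has coproducts}), coequalizers (Lemma \ref{Part has coeq}) and the reduction of Proposition \ref{prop:co complete cat}. The extra remarks on local smallness and the explicit coequalizer-of-coproducts construction are just the standard details the paper leaves implicit.
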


\begin{example}
\label{example: follow-up of counterex.}
We now compute the coequalizer $coeq(f,g)$ for the partial groups and maps defined in Counterexample \ref{counterex to factoriz}; in this specific case this computation is particularly easy.\\
Recall that $\sim$ is the equivalence relation on $M$ such that $M/ \sim$, paired with the projection map, is the coequalizer of $f$ and $g$ in $Set$; we have already seen that $|M / \sim|=2$ and also that $M  / \sim$ cannot be the coequalizer in $Part$. By Lemma \ref{Part has coeq} there exists an equivalence relation $\R$ on $M$ such that $M/\R$ is the coequalizer of $f$ and $g$ in the category of partial groups; in particular, such $\R$ satisfies $\sim \subseteq \R$. Then we get $\sim \subsetneqq \R$, which implies $|M/ \R|=1$; therefore $coeq_{Part}(f,g) = M / \R = \{1\}$, the trivial (partial) group, paired as usual with the projection over the quotient.
\end{example}

\section{Limits in $Part$}
\label{section:limits}

The situation regarding limits of partial groups is definitely simpler and A. Chermak has already proven completeness of the category $Part$ in \cite[Appendix A]{Chermak:2022}. For the purpose of collecting all results in one single paper, we nonetheless provide a proof here. The reader who is already familiar with Chermak's proof may skip this section.\\
Let's consider for a moment groups; there are a free-construction functor $F: Set \fto Grp$ and a forgetful functor $U: Grp \fto Set$ such that $F \dashv U$, so in particular $U$ preserves limits, as it is a right adjoint, . The situation with partial groups is the same: we have a free-construction functor $F: Set^* \fto Part$ and a forgetful functor $U: Part \fto Set^*$ such that $F \dashv U$. Hence a limit in $Part$ must have as underlying set the pointed setwise limit.\\

We now sketch a proof of existence of limits in $Part$.

\begin{theorem} \label{complete}
	The category Part is complete.
\begin{proof}
	Again we prove that $Part$ has all products and equalizers. \\
	
	\emph{Step 1.} If $\{ \LL _a \}_{a \in I}$ is a family of partial groups, consider the setwise cartesian product $\LL = \bigtimes \LL_a$; the projections $p_a: \LL \rightarrow \LL_a$ extend in the usual way to $p_a^*: W(\LL) \rightarrow W(\LL_a)$. Define $D(\LL) = D$ as
	\[
	D:= \{ w=( (w_a^1), \dots, (w_a^n) )_a \in W(\LL) \mid \forall a \in I, wp_a^*=(w_a^1, \dots, w_a^n) \in D(\LL_a) \}.
	\]
	and a product $\Pi$ on $\LL$ by $(w)\Pi:= ((wp_a^*)\Pi_a)_a$. The inversion map $i$ on $\LL$ is trivially defined componentwise; it is a straightforward exercise to check that $(\LL, D, \Pi, i)$ is a partial group. Considering the diagram below
	\begin{center}
	\begin{tikzcd}
		\LL_a \\
			& \LL \ar[lu, "p_a"] \ar[ld, swap, "p_b"] & \mathcal{M} \ar[l, dotted, "t"] \ar[llu,bend right=15, swap, "t_a"] \ar[lld,bend left=15, "t_b"] \\
		\LL_b
	\end{tikzcd}
	\end{center}
$t$ is defined by $t: m \mapsto (mt_a)_a$. Notice that, for $w=(m_1, \dots, m_n) \in D(\mathcal{M})$, we have $wt_a^*=(m_1t_a, \dots, m_nt_a) \in D(\LL_a)$ for all $a \in I$, so $wt \in D$. Clearly $t$ is a morphism of partial groups, unique satisfying the universal property of products.\\

\emph{Step 2.} Consider morphisms of partial groups $f,g: \LL \rightarrow \LL'$; define 
\[
\mathcal{E}:= \{ x \in \LL \mid xf=xg \}.
\]
We have
\begin{center}
\begin{tikzcd}
	\mathcal{E} \ar[r,hook,"j"] & \LL \ar[r,shift left=.75, "f"] \ar[r, shift right=.75, swap, "g"] & \LL';
\end{tikzcd}
\end{center}
it is a simple exercise to prove that $\mathcal{E}$ is a partial group, $j$ a partial groups morphism and that the pair $(E,j)= eq(f,g)$.\\

As a consequence of Proposition \ref{prop:co complete cat} we obtain that $Part$ is complete. In addition, it is now a trivial observation the fact that the full subcategory of $Part$ whose objects are the finite partial groups is finitely complete.
\end{proof}
\end{theorem}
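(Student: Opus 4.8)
The plan is to invoke Proposition \ref{prop:co complete cat}: it suffices to construct all small products and all equalizers in $Part$, and, for the last assertion, to observe that both constructions preserve finiteness. Since $U \colon Part \fto Set^*$ is a right adjoint (Lemma \ref{adjoint functors free-forgetful-first}), it preserves limits, so the underlying pointed set of a limit is forced to be the corresponding pointed-set limit; the content is to equip it with a partial group structure. In contrast with colimits, this structure is built ``coordinatewise'' and involves no quotient, which is precisely why the obstruction of Counterexample \ref{counterex to factoriz} does not reappear.

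\emph{Products.} Given partial groups $\{\LL_a\}_{a \in I}$, take $\LL = \prod_{a \in I} \LL_a$ with point $(1_{\LL_a})_a$ and projections $p_a \colon \LL \fto \LL_a$, and set
\begin{gather*}
	D(\LL) := \{\, w \in W(\LL) \mid w p_a^* \in D(\LL_a) \text{ for every } a \in I \,\}, \\
	(w)\Pi := \big( (w p_a^*)\Pi_a \big)_a ,
\end{gather*}
with $i$ acting componentwise; the words $w p_a^*$ have a common length, so $\Pi$ is well posed. Axioms (1)--(4) of Definition \ref{def partial gr} for $(\LL, D(\LL), \Pi, i)$ are verified by projecting to each coordinate and using that each $\LL_a$ is a partial group: for instance, for axiom (3), if $u \circ v \circ w \in D(\LL)$ then $(u p_a^*) \circ (v p_a^*) \circ (w p_a^*) \in D(\LL_a)$, axiom (3) in $\LL_a$ applies, and reassembling over $a$ yields the statement in $\LL$. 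For the universal property, a cone $(\M, (t_a)_a)$ over the $\LL_a$ induces $m t := (m t_a)_a$; since $t_a = t p_a$ and each $t_a$ is a morphism, for $w \in D(\M)$ one has $w t^* p_a^* = w t_a^* \in D(\LL_a)$ for all $a$, hence $w t^* \in D(\LL)$, and the product square for $t$ commutes coordinatewise, so $t$ is a morphism of partial groups; it is the unique one compatible with the $p_a$ because its coordinates are forced.

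\emph{Equalizers.} Given morphisms $f, g \colon \LL \fto \LL'$ with products $\Pi$, $\Pi'$, take $\mathcal{E} := \{x \in \LL \mid xf = xg\}$ with inclusion $j \colon \mathcal{E} \hookrightarrow \LL$, and the restricted data $D(\mathcal{E}) := D(\LL) \cap W(\mathcal{E})$, with $\Pi$ and $i$ restricted from $\LL$. The one point requiring attention is that this structure is internally closed: if $w \in D(\mathcal{E})$ then all entries of $w$ lie in $\mathcal{E}$, so $w f^* = w g^*$, whence $(w)\Pi f = (w f^*)\Pi' = (w g^*)\Pi' = (w)\Pi g$, i.e.\ $(w)\Pi \in \mathcal{E}$; likewise $(x)i \in \mathcal{E}$ for $x \in \mathcal{E}$, since morphisms of partial groups preserve inverses, and $1_\LL \in \mathcal{E}$. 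Granting this, axioms (1)--(4) for $\mathcal{E}$ are inherited from $\LL$ (every word involved already belongs to $D(\LL) \cap W(\mathcal{E})$), the inclusion $j$ is a morphism because $D(\mathcal{E}) \subseteq D(\LL)$ and its product square restricts that of $\LL$, and $jf = jg$ by construction. If $h \colon \M \fto \LL$ satisfies $hf = hg$, then $(\M)h \subseteq \mathcal{E}$, so $h$ factors uniquely through $j$ as a set-map $\ov{h}$, and $\ov{h}$ is a morphism of partial groups since $(D(\M))\ov{h}^* = (D(\M))h^* \subseteq D(\LL) \cap W(\mathcal{E}) = D(\mathcal{E})$ and its product square coincides with that of $h$.

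By Proposition \ref{prop:co complete cat}, $Part$ is complete; and since a finite product of finite partial groups is finite and a subset of a finite partial group is finite, the same proposition (in its finite form) shows that $FinPart$ is finitely complete. I do not expect a genuine obstacle here: the only steps needing care are the common-length bookkeeping for the product and the internal-closure verification for the equalizer — which are exactly the points whose colimit counterparts fail, as Counterexample \ref{counterex to factoriz} demonstrates.
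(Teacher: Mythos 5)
Your proposal is correct and follows essentially the same route as the paper: products with componentwise domain, product and inversion, equalizers as the subobject $\mathcal{E}=\{x \mid xf=xg\}$ with the restricted structure, and then Proposition \ref{prop:co complete cat} together with the finiteness observation. You merely spell out the verifications the paper leaves as ``straightforward exercises,'' so there is nothing to correct.
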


\begin{proof}[Proof of Theorem \ref{theoremA}]
This is obtained by combining Theorems \ref{cocompl-of-part} and \ref{complete} and the remark after Lemma \ref{part has coproducts}.
\end{proof}

\subsection{Partial groups as simplicial sets}
It has been initially conjectured by C. Broto and then formalized in joint work with A. Gonz\'{a}lez that partial groups carry a natural structure of simplicial sets. Indeed it turns out that $Part$ is equivalent to a full subcategory of the category $sSet$ of simplicial sets, as described in \cite{Gonzalez:2015}. We wish to show that such subcategory is not closed under formation of colimits in $sSet$, so the well-understood structure of colimits of simplicial sets cannot be used for computing colimits of partial groups. We begin with recalling some definitions and elementary results; more details may be found in \cite{Goerss/Jardine:1999a}.\\
Let $\Delta$ be the category of finite partially ordered sets with monotone, non-decreasing functions as morphisms. It is possible to show that $\Delta$ is equivalent to the category whose objects are the finite ordered sequences $[n]=\{0<1<\dots <n\}$ and whose morphisms are all non-decreasing functions. This is known as the skeletal subcategory; in the following we will restrict ourselves to this subcategory and denote it as well by $\Delta$.
\begin{definition}
	A simplicial set is a functor
	\[
	X: \Delta ^{op} \fto Set;
	\]
	hence we may think at a simplicial set as a sequence of sets $X_n := [n]X$ with morphisms induced by the monotone functions in $\Delta$ taken with opposite direction.\\
	The elements of $X_0$ are called \emph{vertices} of the simplicial set $X$; if $X_0 = \{v\}$ (i.e. $X$ has a single vertex), then $X$ is said to be \emph{reduced}.
\end{definition}
Simplicial sets together with natural transformations form a category, usually denoted by $sSet$. The structure of colimits of simplicial sets is well known; if $T:\C \fto sSet$ is a diagram over the small category $\C$, then
\[
colim(T) : \Delta^{op} \fto Set
\]
is the simplicial set with $colim(T)_n = \underset{c \in \C}{colim} \; [n](cT)$ and suitable morphism induced by the universal property of colimits (we will not need to study in detail the morphisms).\\
In \cite[Theorem 4.8]{Gonzalez:2015} Gonz\'{a}lez proves that partial groups, when regarded as simplicial sets, are characterized by being reduced, among other properties. Therefore, if $\LL$ and $\M$ are partial groups, viewed as simplicial sets they are reduced, i.e. $\LL_0=\{ \star \}$ and $\M_0=\{ * \}$; however the coproduct computed in $sSet$ is not reduced, since $(\LL \sqcup \M)_0 = \LL_0 \sqcup \M_0 = \{ \star, * \}$, thus it cannot be a partial group.

\section{Quotients of partial groups}
\label{section:quotients}

It is common to control algebraic structures by means of a generating set and a set of relations. The generating process is normally related to the existence of free objects, whereas considering relations means looking at colimits. We now want to investigate this situation for partial groups. First of all we need to define quotients of partial groups; such a notion was previously defined only for specific subclasses, namely for groups and for the so-called \emph{localities} (see \cite[Definition 2.7 and Sections 3 and 4]{Chermak:2022}); in order to define quotients for all partial groups we will take advantage of the categorical point of view, generalizing what happens with groups and group homomorphisms. We will then show that our definition coincides with that given by A. Chermak for localities.\\
We start with defining substructures; the names assigned to them follow the notation used by Chermak in \cite{Chermak:2022} and \cite{Chermak:2017:III}.

\begin{definition}
	Let $\LL = (\LL, D, \Pi, i)$ be a partial group and $\H \subseteq \LL$ a non-empty subset. For any $g \in \LL$ write $D(g) :=\{ x \in \LL \mid (g\inv,x,g) \in D)\}$.
	\begin{itemize}
		\item[(a)] The quadruple $(\H, E_\H, \Pi_{E_\H}, i_\H)$ is an \emph{impartial subgroup} of $\LL$ if $E_\H \subseteq D \cap W(\H)$, $\Pi_{E_\H}$ and $i_\H$ are the restrictions of, respectively, $\Pi$ to $E_\H$ and $i$ to $\H$ and if $(\H, E_\H, \Pi_{E_\H}, i_\H)$ is a partial group.
		\item[(b)] An impartial subgroup $\H=(\H, E_\H, \Pi_{E_\H}, i_\H)$ of $\LL$ is a \emph{partial subgroup} if $E_\H = D \cap W(\H)$. In this case $E_\H$ is completely determined by the subset $\H$ and the partial group $\LL$.
		\item[(c)] $\H$ is a \emph{partial normal subgroup} of $\LL$ if it is a partial subgroup of $\LL$ and, for every $x \in \H$ and $g \in \LL$ such that $x \in D(g)$, $(g\inv, x, g) \Pi \in \H$  holds.
	\end{itemize}
\end{definition}

Note that $(\H, E_\H, \Pi_{E_\H}, i_\H)$ being a partial group in (a) implies that $E_\H$ is closed under application of $\Pi_{E_\H}$ and also $\H$ is closed under $i_\H$. The next results will make clear that impartial subgroups are the correct substructure to consider from the categorical point view; however it is often better to work with partial subgroups since they have better properties. According to Chermak's definition in \cite{Chermak:2017:III}
\begin{itemize}
	\item[(a')] $\H$ is an impartial subgroup of $(\LL,D,\Pi,i)$ if $\H$ is the homomorphic image of a partial group homomorphism $f: \M \fto \LL$ for some partial group $(\M, D_\M,\Pi_\M,i_\M)$.
\end{itemize}
With regard to (a'), the homomorphic image $\mathcal{I}:=im(f)$ becomes a partial group $(\mathcal{I}, D_\mathcal{I}, \Pi_\mathcal{I}, i_\mathcal{I})$ by setting $D_\mathcal{I}:=(D_\M)f^*$ and defining $\Pi_\mathcal{I}: = \Pi |_\mathcal{I}$ and $i_\mathcal{I}:= i |_\mathcal{I}$, respectively the restrictions of $\Pi$ to $D_\mathcal{I}$ and of $i$ to $\mathcal{I}$. Details may be found in the proof of the following Lemma.

\begin{lemma}\label{equiv-def-impar-subgr}
The definitions (a) and (a') of an impartial subgroup are equivalent.
\end{lemma}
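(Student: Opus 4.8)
The plan is to prove the two implications separately, using throughout the elementary fact (already invoked in Section~\ref{coequaliz partial grous} and recorded in \cite[Section~1]{Chermak:2022}) that a morphism of partial groups $f\colon(\M,D_\M,\Pi_\M,i_\M)\fto(\LL,D,\Pi,i)$ commutes with inversion, i.e. $((x)i_\M)f=((x)f)i$ for all $x\in\M$ (equivalently $(w)i_\M f^{*}=(wf^{*})i$ for $w\in W(\M)$), and preserves units.

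For the implication (a)$\Rightarrow$(a') I would simply take the inclusion $\iota\colon\H\hookrightarrow\LL$. Since $(\H,E_\H,\Pi_{E_\H},i_\H)$ is a partial group with $E_\H\subseteq D\cap W(\H)$ and $\Pi_{E_\H}=\Pi|_{E_\H}$, conditions (a) and (b) of Definition~\ref{def morphisms part gr} hold for $\iota$, so $\iota$ is a morphism of partial groups whose image is $\H$. Moreover the partial group structure that (a') attaches to $im(\iota)$ is $(E_\H)\iota^{*}=E_\H$ for the domain, $\Pi|_{im(\iota)}=\Pi_{E_\H}$ for the product and $i|_{im(\iota)}=i_\H$ for the inversion --- precisely the structure we started from. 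Hence $\H$ is an impartial subgroup in the sense of (a') carrying the same structure.

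For (a')$\Rightarrow$(a), given $f\colon\M\fto\LL$ I would set $\mathcal{I}:=im(f)$, $D_\mathcal{I}:=(D_\M)f^{*}$, $\Pi_\mathcal{I}:=\Pi|_{D_\mathcal{I}}$ and $i_\mathcal{I}:=i|_\mathcal{I}$, and first dispose of the easy inclusions: $D_\mathcal{I}=(D_\M)f^{*}\subseteq D$ by Definition~\ref{def morphisms part gr}(a), while $D_\mathcal{I}\subseteq W((\M)f)=W(\mathcal{I})$, so $D_\mathcal{I}\subseteq D\cap W(\mathcal{I})$; that $\Pi_\mathcal{I}$ actually lands in $\mathcal{I}$ is because for $w=(v)f^{*}\in D_\mathcal{I}$ one has $(w)\Pi=((v)\Pi_\M)f\in\mathcal{I}$, and that $i_\mathcal{I}$ maps $\mathcal{I}$ into $\mathcal{I}$ follows from $f$ commuting with inversion. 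The substance is then to verify that $(\mathcal{I},D_\mathcal{I},\Pi_\mathcal{I},i_\mathcal{I})$ satisfies axioms (1)--(4) of Definition~\ref{def partial gr}. I would do this by a uniform \emph{lifting} argument: represent any given word of $D_\mathcal{I}$ as $(\tilde w)f^{*}$ with $\tilde w\in D_\M$, invoke the corresponding axiom for the partial group $\M$, and push the conclusion back down along $f^{*}$, using that $f^{*}$ respects concatenation, that the square of Definition~\ref{def morphisms part gr}(b) commutes, that $f$ commutes with $i$, and that $f$ preserves units; for axiom~(1) one additionally uses that a factorization $u\circ v=(\tilde w)f^{*}$ lifts to a factorization $\tilde w=\tilde u\circ\tilde v$ with $\tilde u,\tilde v\in D_\M$, which is axiom~(1) for $\M$.

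The four axiom checks are routine once the lifting recipe is in place, so I do not expect a serious obstacle; the only thing that needs care is the bookkeeping that $\Pi_\mathcal{I}$ is well defined independently of the chosen lift --- which is automatic, since $\Pi$ is already an honest function on $D\supseteq D_\mathcal{I}$ --- together with the observation that in the (a)$\Rightarrow$(a') direction the structure produced by (a') genuinely coincides with the one prescribed in (a), which is what makes the two definitions equivalent rather than merely mutually implied up to isomorphism.
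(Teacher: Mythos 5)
Your proposal is correct and follows essentially the same route as the paper: the inclusion map handles (a)$\Rightarrow$(a'), and for (a')$\Rightarrow$(a) you equip $im(f)$ with $(D_\M)f^*$ and the restrictions of $\Pi$ and $i$, then verify axioms (1)--(4) by lifting words along $f^*$ to $D_\M$ and pushing back down, exactly as in the paper (including the splitting of a lift to prove axiom (1) and the use of unit and inversion preservation for axiom (4)). Your additional remark that in the (a)$\Rightarrow$(a') direction the induced structure coincides with the given one is a harmless refinement the paper leaves implicit.
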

\begin{proof}
Note that, if $\H$ is an impartial subgroup of a partial group $(\LL, D, \Pi, i)$ according to (a), then the inclusion map $i: \H \fto \LL$ is a morphism of partial groups; thus (a) implies (a').\\
Conversely, if $\H = im(f)$ for some partial group $(\M,D_\M,\Pi_\M,i_\M)$ and homomorphism $f:\M \fto \LL$, then we have $D_\H:=(D_\M)f^* \subseteq W(\H) \cap D_\LL$. Taking restrictions $\Pi_\H$ of $\Pi$ to $D_\H$ and $i_\H$ of $i$ to $\H$, we need to show that $(\H, D_\H, \Pi_\H, i_\H)$ is a partial group. Since $\M \subseteq D_\M$, we have $\H=(\M)f \subseteq D_\H$; moreover suppose that $u=(u_1,\dots,u_k), \, v=(v_1,\dots,v_l) \in W(\H)$ are such that $u \circ v \in D_\H$, then there exists $m = (m_1, \dots, m_k, n_1, \dots, n_l) \in D_\M$ such that $u \circ v = (m)f^*$. In particular, $m':=(m_1, \dots, m_k), n':=(n_1,\dots, n_l) \in D_\M$ and $u=(m')f^*, \, v=(n')f^*$, so that $u,v \in D_\H$. This proves axiom (1) of Definition \ref{def partial gr}. Axiom (2) is trivial, since the product is defined by restricting that of $\LL$. Let now $u \circ v \circ w \in D_\H$ with $u=(u_1,\dots,u_k), \, v=(v_1,\dots,v_l)$ and $w=(w_1,\dots,w_r)$; by axiom (1) $u,v,w \in D_\H$ and there exist $m':=(m_1, \dots, m_k), \, n':=(n_1,\dots, n_l), \, t':=(t_1,\dots,t_r) \in D_\M$ so that $m'\circ n' \circ t' \in D_\M$ and $u=(m')f^*, \, v=(n')f^*, \, w=(t')f^*$. Since $m' \circ (n')\Pi_\M \circ t' \in D_\M$, we have $u \circ (v)\Pi_\H \circ w = (m')f^* \circ (n')f^* \Pi \circ (t')f^* = (m' \circ (n')\Pi_\M \circ t')f^* \in D_\H$. The equality $(u \circ (v)\Pi_\H \circ w)\Pi_\H = (u \circ v \circ w)\Pi_\H$ follows at once, recalling that these are, in particular, elements of $D$ and $\Pi_\H$ is the restriction of $\Pi$. Let now $u$ and $m'$ be as above; then $(m')i_\M \circ m' \in D_\M$ and it has product $1_\M$, thus $((m')i_\M \circ m')f^* = (u)i_\H \circ u \in D_\H$ and $((u)i_\H \circ u)\Pi_\H = ((m')i_\M \circ m')f^* \Pi = ((m')i_\M \circ m')\Pi_\M f = (1_\M)f = 1_\LL = 1_\H$. This completes the proof.
\end{proof}
In particular, the impartial subgroups of a partial group $\LL$ are, up to isomorphism, exactly the partial groups embedded in $\LL$ and play the same role of subgroups of a group; for example, we will show that they provide a factorization of a morphism in an epimorphism followed by a monomorphism (the inclusion). \\
With respect to quotients, we look at what happens with groups from the categorical point of view. If $G$ is a group and $H$ a subgroup of $G$, it is well known that $H$ affords a quotient structure if and only if it is normal. Nevertheless, it is possible to \emph{kill} any subgroup $H$ through a quotient at the price of replacing $H$ with its the normal closure, that is $N:= \langle H^G \rangle$. Since the category $Grp$ is cocomplete and has a $0$-object, namely the trivial group, one can build the trivial map $\hat{1}$ between any two groups $G$ and $H$ by passing through the trivial group,
\begin{center}
	\begin{tikzcd}
	H \ar[rr,"\hat{1}"] \ar[rd, "\hat{1}"] & & G \\
	& 1 \ar[ru, "\hat{1}"]
	\end{tikzcd}
\end{center}
In particular, if $H \le G$ and $i: H \fto G$ is the inclusion map, then it is a straightforward computation checking that $coker(i) = coeq(i,\hat{1}) = G/N$, where $N:= \langle H^G \rangle$, so the following is a coequalizer diagram
\begin{center}
	\begin{tikzcd}
	H \ar[r, hook, shift left=1.2,"i"] \ar[r,shift right=.2,swap, "\hat{1}"] & G \ar[r,"q"] & \frac{G}{N}
	\end{tikzcd}
\end{center}
Thus it is reasonable to generalize the notion of a quotient to any subgroup $H$ by setting  $G/ H := coeq_{Grp}(i,\hat{1}) = coker(i) = G/ \langle H^G \rangle$, $i : H \fto G$ being the inclusion map and $\hat{1}$ the trivial map. This line of reasoning extends directly to partial groups since also $Part$ has all limits and colimits and is pointed, the trivial group $1$ being the $0$-object. Denoting the trivial map always by $\hat{1}: \LL \fto \M$, for every morphism $f: \LL \fto \M$ one then has $ker(f) = eq(f,\hat{1})$ and $coker(f) = coeq(f,\hat{1})$.
It is immediate to verify that our notion of kernel of a morphism coincides with the one given by Chermak in \cite[Definition 1.11]{Chermak:2022}; we are then led to the following definition.
\begin{definition}\label{defin-quotient-local}
	Let $\LL$ be a partial group and $\H \subseteq \LL$ be an impartial subgroup. Let $j : \H \fto \LL$ be the inclusion morphism. Define the quotient partial group $\LL / \H$ by
	\[
	\LL / \H := coker(j)=coeq_{Part}(j,\hat{1}),
	\]
	where $\hat{1}$ is the trivial map from $\H$ to $\LL$.
\end{definition}
In particular we can factor any morphism of partial groups $f: \LL \fto \M$ as an epi followed by a mono, that is
\begin{center}
\begin{tikzcd}
f: &[-25pt] \LL \ar[rr]  \ar[rd,two heads] & & \M \\ [-20pt]
	& 	& im(f) \ar[ru,hook]
\end{tikzcd}
\end{center}
and it is a simple exercise to see that $coker(f) = \M / im(f)$.\\

It is clear that, when $\H$ and $\LL$ are groups, the definition above coincides with the known quotient in $Grp$. We now show that it also coincides with the definition of a \emph{quotient locality} given by Chermak in \cite{Chermak:2022}. Chermak's construction applies over a finite locality $\LL$ (see \cite[Definition 2.7]{Chermak:2022}) and a partial normal subgroup $\N \trianglelefteq \LL$; in \cite[Section 3]{Chermak:2022} the author defines an equivalence relation on $\LL$ by identifying the maximal classes (named the \emph{cosets} of $\N$) of a reflexive and transitive relation. In particular, \cite[Proposition 3.14]{Chermak:2022} shows that cosets are all of the form $\N f$ for some element $f \in \LL$.

\begin{notation}
We follow the notation adopted in \cite[Definition 3.6 and Corollary 4.5]{Chermak:2022}. Thus $\LL = (\LL, \Delta, S)$ is a finite locality and $\N \le \LL$ a partial normal subgroup, $\overline{\LL} := \LL / \N$ is the set of $\N$-cosets of $\LL$ and $\rho: \LL \fto \overline{\LL}$ the projection map; just for the following lemma the notation $\LL / \N$ is used exclusively for Chermak's construction. Setting $\overline{\Delta} := \{(P)\rho =: \overline{P} \mid P \in \Delta \}$, \cite[Lemma 3.16 and Theorem 4.3]{Chermak:2022} show that $(\overline{\LL}, \overline{\Delta}, \overline{S})$ is a locality.
\end{notation}

\begin{lemma}
Following the notation above, if $\LL = (\LL, \Delta, S)$ is a finite locality, $\N \le \LL$ a partial normal subgroup of $\LL$ and $\rho : \LL \fto \LL / \N$ the projection sending each element $g \in \LL$ to the unique maximal coset of $\N$ containing $g$, then the locality $(\ov{\LL} = \LL / \N, \ov{\Delta}, \ov{S})$ is the quotient $coker(\N \hookrightarrow \LL)$ as in Definition \ref{defin-quotient-local}.
\end{lemma}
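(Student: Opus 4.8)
The plan is to verify that Chermak's quotient locality $(\ov{\LL}, \ov{\Delta}, \ov{S})$, together with the projection $\rho$, satisfies the universal property of $coeq_{Part}(j, \hat{1})$, where $j : \N \hookrightarrow \LL$ is the inclusion and $\hat{1}$ the trivial map. First I would check that $\rho$ is a morphism of partial groups: since $\rho$ is a locality homomorphism in Chermak's sense, compatibility with domains and with the multivariable product is part of \cite[Theorem 4.3]{Chermak:2022}, so this should be a matter of translating between the two vocabularies. Next I would check that $j \cdot \rho = \hat{1} \cdot \rho$, i.e. that every element of $\N$ is sent by $\rho$ to the unit of $\ov{\LL}$; this is exactly the statement that $\N$ itself is the maximal coset of $\N$ containing $1_\LL$, which follows from \cite[Proposition 3.14]{Chermak:2022}. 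So $(\ov{\LL}, \rho)$ is a cocone over the parallel pair $(j, \hat{1})$.

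The substance is the universal property. Given a partial group $\M$ and a morphism $\tau : \LL \fto \M$ with $j \cdot \tau = \hat{1} \cdot \tau$ — equivalently, $\N$ lies in the kernel of $\tau$ in the sense of \cite[Definition 1.11]{Chermak:2022} — I must produce a unique $\psi : \ov{\LL} \fto \M$ with $\rho \cdot \psi = \tau$. Set-theoretically $\psi$ is forced: since $\rho$ is surjective, $\psi$ must send a coset $\ov{g} = \N g$ to $(g)\tau$, so uniqueness is automatic and the only issue is well-definedness, namely that $\N g$ and $\N g'$ being the same coset forces $(g)\tau = (g')\tau$. Here I would invoke the description of cosets from \cite{Chermak:2022}: two elements lie in the same $\N$-coset precisely when they are linked by the reflexive–transitive relation generated by left-multiplication by elements of $\N$ (in the partial sense, using admissible triples), and since $\N \subseteq \ker(\tau)$ and $\tau$ respects all products that are defined, $\tau$ is constant on each such coset. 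Equivalently, and more in the spirit of Lemma \ref{Part has coeq}, one observes that the fiber relation $\R_\tau$ of $\tau$ on $\LL$ contains all pairs $(n, 1_\LL)$ for $n \in \N$ and satisfies condition \eqref{condition of sim}, hence contains the relation $\R$ defining $coeq_{Part}(j,\hat{1})$; combined with the identification of $\R$-classes with $\N$-cosets this gives well-definedness of $\psi$. Finally I would check $\psi$ is a morphism of partial groups: $(D(\ov{\LL}))\psi^* \subseteq D(\M)$ and commutativity with the products both follow by lifting a word in $D(\ov{\LL})$ along $\rho^*$ to a word in $D(\LL)$ and using that both $\rho$ and $\tau$ are morphisms, exactly as in the proof of Lemma \ref{Part has coeq}.

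The main obstacle I anticipate is reconciling the two a priori different quotients: Chermak's $\ov{\LL}$ is built as a set of maximal cosets of a transitive relation tailored to localities, whereas $coeq_{Part}(j,\hat{1})$ is built as $\LL/\R$ for the abstract relation $\R$ of the cocompleteness proof. The crux is therefore to show that the equivalence relation "same $\N$-coset" coincides with $\R$ — or at least that each is contained in the other. One inclusion is the well-definedness argument above ($\R \subseteq$ coset-relation would fail unless we argue carefully; in fact what we need is coset-relation $\subseteq \R_\tau$ for every valid $\tau$, so that $\psi$ descends). For the reverse, that $\R$ does not collapse more than the cosets, I would use minimality of $\R$ together with the fact that $\rho$ itself is an admissible $\tau$, i.e. $\rho : \LL \fto \ov{\LL}$ is a partial group morphism killing $\N$, so $\R \subseteq \R_\rho = $ coset-relation. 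Putting the two inclusions together identifies the underlying sets, and since the partial group structure on each side is in both cases the unique one making the projection a morphism (the domain is the $\rho$-image of $D(\LL)$ and the product is forced by commutativity), the structures agree as well, completing the proof.
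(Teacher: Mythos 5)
Your proposal is correct, and at its core it runs on the same two facts as the paper's proof: (i) Chermak's quotient structure on $\ov{\LL}$ is induced from $\LL$ and makes $\rho$ a morphism of partial groups killing $\N$, and (ii) by \cite[Proposition 3.14]{Chermak:2022} every coset has the form $\N f$, so that any morphism coequalizing $\N \hookrightarrow \LL$ and $\hat{1}$ is constant on cosets via $g=(n,f)\Pi \mapsto (f)\tau$. The packaging differs: the paper identifies $\ov{\LL}$ with the already-constructed coequalizer $\LL/\R$ of Lemma \ref{Part has coeq} by proving the two equivalence relations are equal — $\R \subseteq\, \approx$ by minimality of $\R$ (since the product factors over $\LL/\N$), and $\approx\, \subseteq \R$ by applying condition \eqref{condition of sim} to the componentwise $\R$-related words $(n,f)$ and $(1,f)$ — and then observes that domains, products and inversions agree because both are induced from $\LL$. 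You instead verify the universal property of $coeq_{Part}(j,\hat{1})$ directly for $(\ov{\LL},\rho)$; this buys independence from the explicit model $\LL/\R$ (uniqueness of colimits then makes your final relation-comparison paragraph, while valid, strictly redundant), at the cost of having to check separately that your $\psi$ is a morphism of partial groups by lifting words along $\rho^*$. Note that your well-definedness step, specialized to $\tau$ the coequalizer projection, is exactly the paper's inclusion $\approx\, \subseteq \R$, and your use of minimality of $\R$ against $\R_\rho$ is exactly the paper's $\R \subseteq\, \approx$; so the mathematical content coincides, only the order of quantification over cocones differs.
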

\begin{proof}
	Recall that also $coker(\N \hookrightarrow \LL)$ is defined by setting an equivalence relation on $\LL$. Since the domains, product maps and inversion maps of $\LL / \N$ and $coker(\N \hookrightarrow \LL)$ are in both cases induced by those of $\LL$ (\cite[Lemma 3.16]{Chermak:2022}), all we need to show is that $\LL / \N = coker(\N \hookrightarrow \LL)$ as sets (that is, the equivalence relations are the same) and $\rho$ is the projection map also in the coequalizer diagram defining $coker(\N \hookrightarrow \LL)$. Let $\approx$ be the equivalence relation on $\LL$ with classes $\LL / \N$ and $\R$ be the one defining $coker(\N \hookrightarrow \LL)$; since the product of $\LL$ factors over $\LL / \N$, by definition of $\R$ we have $\R \subseteq \approx$.\\
	Suppose now that $\mathcal{E}$ is an equivalence class of $\approx$; by \cite[Proposition 3.14]{Chermak:2022} there exists an element $f \in \LL$ such that $\mathcal{E} = \N f$. Let $g \in \N f$, so $g=nf=(n,f)\Pi$ for some $n \in \N$; since the words $(n,f)$ and $(1,f)$ are clearly componentwise $\R$-related, by the definition of $\R$ we also have $g =(n,f) \Pi \; \R \; (1,f)\Pi = f$. This shows that $ \mathcal{E}=\N f$ is contained in an equivalence class of $\R$. Thus $ \approx \subseteq \R$, so equality holds.\\
	That $\rho$ is also the quotient map over $coker(\N \hookrightarrow \LL)$ is now trivial.
\end{proof}

We end this section showing that, even though impartial subgroups are the natural categorical subojects in $Part$, they are after all redundant for the purpose of computing quotients.

\begin{lemma}
\label{lemma:quotients-part-impart-sbgr}
Let $\LL=(\LL,D,\Pi,i)$ be a partial group, $\N=(\N,E,\Pi_\N, i_\N)$ be an impartial subgroup of $\LL$ and $\M=(\M,D_\M, \Pi_\M, i_\M)$ be the partial subgroup of $\LL$ generated by the subset $\N$ in the sense of \cite[Lemma 1.8(e) and 1.9]{Chermak:2022}. Then $\LL/\N \cong \LL/\M$.
\begin{proof}
We have an obvious inclusion morphism $i:\N \hookrightarrow \M$, so we may consider the following diagram
\begin{center}
\begin{tikzcd}
\N \ar[r,hook,"i"]	& \M \ar[r,hook,"i_\M"] 	& \LL \ar[d,two heads, "q_\N"] \ar[r,two heads, "q_\M"]	& \LL/\M  \ar[dl,shift right=1, "\psi"']\\
				&					& \LL/\N \ar[ur,shift right = 1, "\varphi"']
\end{tikzcd}
\end{center}
with $i i_\M = i_\N$, the inclusion morphism of $\N$ in $\LL$. Since $i_\N q_\M = i i_\M q_\M = \hat{1}$, by the universal property of the cokernel $(\LL/\N,q_\N)$ there exists a unique map $\varphi: \LL/\N \fto \LL/\M$ such that $q_\M = q_\N \varphi$. Now \cite[Lemma 1.9]{Chermak:2022} shows that $\M = \bigcup_{i \ge 0} \N_i$, with $\N_0:=\N$ and $\N_i := \{ (n)\Pi \mid n \in W(\N_{i-1}) \cap D \}$; if $m \in \N_0$, then $(m)q_\N = 1_{\LL/\N}$. By induction, let $i \ge 1$ and $m \in \N_{i}$, then $m=(m')\Pi$ for some $m' \in W(\N_{i-1}) \cap D$ and $(m)q_\N = ((m')\Pi)q_\N = ((m')q_\N^*)\Pi_{\LL / \N} = (1_{\LL / \N}, \dots, 1_{\LL / \N}) \Pi_{\LL / \N} = 1_{\LL / \N}$. This shows that $i_\M q_\N = \hat{1}$, so the universal property of $\LL/\M$ yields the existence of a unique morphism $\psi: \LL / \M \fto \LL / \N$ such that $q_\M \psi = q_\N$.\\
Now $q_\N \varphi \psi = q_\M \psi = q_\N$ and $q_\M \psi \varphi = q_\N \varphi = q_\M$, so the uniqueness property yields $\varphi \psi = id_{\LL/\N}$ and $\psi \varphi = id_{\LL/\M}$, hence $\LL/\N \cong \LL/\M$.
\end{proof}
\end{lemma}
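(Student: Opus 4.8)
The plan is to play the universal properties of the two cokernels against each other, using Chermak's inductive description of the partial subgroup generated by a subset. Write $i_\N\colon\N\hookrightarrow\LL$, $i_\M\colon\M\hookrightarrow\LL$ and $i\colon\N\hookrightarrow\M$ for the inclusions, and $q_\N\colon\LL\twoheadrightarrow\LL/\N$, $q_\M\colon\LL\twoheadrightarrow\LL/\M$ for the cokernel projections. Note that $i$ is a morphism of partial groups, since its domain $E$ satisfies $E\subseteq D\cap W(\N)\subseteq D\cap W(\M)=D_\M$ while product and inversion are restrictions, and that $i\cdot i_\M=i_\N$.

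First I would build $\varphi\colon\LL/\N\to\LL/\M$. Since $i_\M q_\M=\hat 1$ by the definition of $coker(i_\M)$ as $coeq(i_\M,\hat 1)$, composing with $i$ gives $i_\N q_\M=i\cdot i_\M q_\M=\hat 1$; because $\LL/\N=coeq(i_\N,\hat 1)$, its universal property produces a unique $\varphi$ with $q_\N\varphi=q_\M$.

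The step with actual content is building $\psi\colon\LL/\M\to\LL/\N$, which amounts to showing $i_\M q_\N=\hat 1$, i.e. that $q_\N$ kills every element of $\M$. Here I would invoke \cite[Lemma 1.8(e) and 1.9]{Chermak:2022}: $\M=\bigcup_{k\ge 0}\N_k$ with $\N_0=\N$ and $\N_k=\{(w)\Pi\mid w\in W(\N_{k-1})\cap D\}$, and then argue by induction on the least $k$ with $m\in\N_k$ that $(m)q_\N=1_{\LL/\N}$. For $k=0$ this holds by the construction of $coker(i_\N)$. For $k\ge 1$ write $m=(w)\Pi$ with $w\in W(\N_{k-1})\cap D$; since $q_\N$ is a morphism of partial groups, $(m)q_\N=\bigl((w)q_\N^*\bigr)\Pi_{\LL/\N}$, and by the induction hypothesis every entry of $(w)q_\N^*$ equals $1_{\LL/\N}$, so this is a word built from copies of the unit, which lies in the domain of $\LL/\N$ and has product $1_{\LL/\N}$ (an elementary consequence of the axioms of Definition \ref{def partial gr}). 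Thus $i_\M q_\N=\hat 1$ and, since $\LL/\M=coeq(i_\M,\hat 1)$, there is a unique $\psi$ with $q_\M\psi=q_\N$.

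Finally I would close the loop: $q_\N\varphi\psi=q_\M\psi=q_\N$ and $q_\M\psi\varphi=q_\N\varphi=q_\M$, so by the fact that coequalizer projections are epimorphisms, $\varphi\psi=id_{\LL/\N}$ and $\psi\varphi=id_{\LL/\M}$, whence $\varphi$ is the desired isomorphism $\LL/\N\cong\LL/\M$. I expect the induction in the third paragraph to be the only real obstacle: partial groups carry no substructure playing the role of a normal closure, so the coincidence of the two quotients cannot be obtained by an abstract normal-closure argument, and one is forced through the explicit generation filtration of $\M$, together with the easily-overlooked fact that a word made of copies of the unit has product the unit.
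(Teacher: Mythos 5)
Your proposal is correct and follows essentially the same route as the paper: obtain $\varphi$ from $i_\N q_\M=\hat 1$ via the universal property of $\LL/\N$, prove $i_\M q_\N=\hat 1$ by induction along Chermak's filtration $\M=\bigcup_k\N_k$, obtain $\psi$, and conclude by the uniqueness/epimorphism property that $\varphi$ and $\psi$ are mutually inverse. Your treatment is if anything slightly more explicit than the paper's (checking that $\N\hookrightarrow\M$ is a morphism and that a word of units has product the unit), but there is no substantive difference.
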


\section{Generators and relations for partial groups}
\label{section:gen and relations}

This section is mostly dedicated to the proof of Proposition \ref{prop:every-pg-is-quot-free}. First of all, we shall address the fact that the behavior of partial groups can be counterintuitive; for example, in \cite[Section 5]{Chermak:2013} and \cite[Chapter 3]{Chermak:2016:II} Chermak describes a way of expanding a given locality $\LL$, the result being a new locality $\LL^+$ having the following properties: $\LL$ is an impartial subgroup of $\LL^+$ and $\LL^+$ is generated by the set $\LL$. In particular, whenever $\LL \subset \LL^+$ we get a proper embedding between two partial groups sharing an equal set of generators, namely $\LL$. This happens since the set of generated elements depends not only on the generators, but also on the product or, better, on the domains $D(\LL)$ and $ D(\LL^+)$. Another important aspect to keep in mind is the following: the free partial group over a set $X$ is very small if compared to the free group over $X$; for example, if $X$ is finite, the free partial group over $X$ is also finite, however the only finite free group is the trivial group, free over the empty set. This is again a consequence of the structure of the domain of a free partial group; being such domain very small, the product is defined only over few words.\\
The idea now consists in providing additional information controlling the size of the domain; this information will correspond to defining a forgetful functor $U$ from $Part$ to a suitable category which remembers the domain $D(\LL)$ of a given partial group $(\LL, D(\LL), \Pi, i)$. It turns out that such a functor must remember also the inversion map $i$ in order to obtain good categorical properties. We start with the following definition.\\

\begin{definition}
	For any set $X$, denote as usual by $W(X)$ the free monoid over $X$. Define the category $Set^s$ in the following way.
	\begin{itemize}
		\item The objects of $Set^s$ are triples $(X,S_X,i_X)$, where $X$ is a set, $S_X \subseteq W(X)$ and $i_X : X \fto X$ is an involutory bijection.
		\item A morphism $h: (X,S_X,i_X) \fto (Y,S_Y,i_Y)$ between two objects is given by a set-wise map $h: X \fto Y$ that satisfies the property $(S_X)h^* \subseteq S_Y$ and makes the following diagram commutative.
		\begin{equation*}
		\label{commutativity-morpphism-inversion maps}
		\tag{$\ddagger$}
		\begin{tikzcd}
		X \ar[r,"h"] \ar[d,"i_X"] &	Y \ar[d,"i_Y"] \\
		X \ar[r,"h"] &	Y
		\end{tikzcd}
		\end{equation*}
		\item Composition of morphisms is the usual composition of maps.
	\end{itemize}
\end{definition}
The set $S_X$ should be thought of as the set of words over which we wish to have a product defined; we then define a forgetful functor with the following shape:
\[
U: Part \fto Set^s, \qquad (\LL,D,\Pi,i)U=(\LL,D,i)
\]
Indeed, whenever we have a morphism of partial groups $f: \LL \fto \LL'$, by definition we have $(D)f^* \subseteq D'$ where $D$ and $D'$ are the domains of $\LL$, respectively $\LL'$, so that we can define $fU = f$; since $f$ respects the inversion map, the diagram \ref{commutativity-morpphism-inversion maps} is commutative. The first question to answer is whether this newly defined forgetful functor $U$ is a right adjoint, that is to say, whether it is possible to build free partial groups over objects of $Set^s$.

\subsection{Construction of free partial groups over $Set^s$}
\label{section:constr-free-part-gr}
Let $(X,S_X,i_X)$ be an object of the category $Set^s$, then we identify the free partial group over $(X,S_X,i_X)$ inside a suitable quotient of the free group with set of generators $X$. For any object $(X,S_X,i_X) \in Ob(Set^s)$ let $W :=W(X)$ be the free monoid on $X$ and $G(X)$ the free group on $X$. Set $G_X := G(X)/R(X)$, where $R(X)$ is the normal closure of the subgroup of $G(X)$ generated by the relations $(x)i_X \circ x = \emptyset = x \circ (x)i_X$ for all $x \in X$; we will often denote an element $g \in G_X$ simply by a representative $(x_1, x_2, \dots, x_n) \in G(X)$. If $x \in X$, denote by $\hat{x}$ its inverse in $G(X)$ (this notation will be used throughout the rest of the paper) and note that we get $R(X)\hat{x} = R(X)(x)i_X$ in $G_X$; then extend $i_X$ to an involutory bijection of $G(X)$ by $(y_1,\dots,y_n)i_X := ((y_n)i_X,\dots,(y_1)i_X)$ where $y_i \in \{x_i, \hat{x}_i\}$ for $x_i \in X$.

\begin{lemma}\label{lemma:description of G_X}
Let $X$ be a set, $i_X$ an involutory bijection on $X$ and $G(X)$ and $G_X$ be as above. Define the monoid homomorphism $t: W(X) \fto G(X)$ by $t: x \mapsto x$ for all $x \in X$. Then $i_X$ factors modulo $R(X)$ and affords the inversion map of the group $G_X$. Moreover
\begin{itemize}
\item[(a)] if $q: G(X) \fto G_X$ is the quotient map, then the composition $\mu:=tq: W(X) \fto G_X$ is a homomorphism of monoids;
\item[(b)] $\mu$ is surjective and $(1_{G_X})\mu \inv = R(X) \cap W(X)$; in particular, if $u,v \in W(X)$ are such that $(u)\mu = (v)\mu$, then $u \circ (v)i_X \in R(X) \cap W(X)$.
\end{itemize}
\end{lemma}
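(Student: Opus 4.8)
The plan is to carry out the whole argument inside the free group $G(X)$ and then push it down to $G_X=G(X)/R(X)$ along the quotient map $q$.

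\emph{The inversion map.} A direct check on letters shows that the extension of $i_X$ to $G(X)$ --- reverse the word, apply $i_X$ to each positive letter and its group inverse $\widehat{(x)i_X}$ to each negative letter --- is an involutory anti-automorphism of $G(X)$ that commutes with group inversion; one only uses that $i_X$ is an involutory bijection of $X$ and that $(x)i_X\in X$. The key point is that $i_X$ fixes each of the normal generators $(x)i_X\circ x$ and $x\circ(x)i_X$ of $R(X)$: reversing such a two-letter word and applying $i_X$ componentwise returns the same word, since $((x)i_X)i_X=x$. As $i_X$ is an anti-automorphism commuting with inversion, it sends a conjugate $grg\inv$ of a generator $r$ to the conjugate $i_X(g)\inv\, r\, i_X(g)$ and reverses products, whence $i_X(R(X))=R(X)$. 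Therefore $i_X$ maps the coset $gR(X)$ onto $R(X)\,i_X(g)=i_X(g)\,R(X)$ (using normality of $R(X)$), so $q\circ i_X$ factors through $q$ and induces an anti-automorphism $\overline{i_X}$ of $G_X$. On the generating set $\{\,xR(X)\mid x\in X\,\}$ the map $\overline{i_X}$ agrees with group inversion, since the relation $(x)i_X\circ x=\emptyset$ gives $R(X)(x)i_X=R(X)\hat x$; two anti-automorphisms agreeing on generators are equal, so $\overline{i_X}$ is precisely the inversion of $G_X$.

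\emph{The homomorphism $\mu$ and its fibres.} Part (a) is immediate: $t$ is the canonical map of the free monoid into the free group, hence a monoid homomorphism, and $q$ is a group homomorphism, so $\mu=tq$ is a homomorphism of monoids. For surjectivity in (b), $G_X$ is generated as a group by $\{\,xR(X)\mid x\in X\,\}$, and this set is closed under inversion, because $(xR(X))\inv=\hat xR(X)=(x)i_X\,R(X)$ and $x\mapsto(x)i_X$ is a bijection of $X$; hence it already generates $G_X$ as a monoid, so $\mu(W(X))$ --- the submonoid generated by the $\mu(x)=xR(X)$ --- equals $G_X$. Since $t$ is injective (a word on positive letters is already reduced in $G(X)$), identifying $W(X)$ with its image gives $(1_{G_X})\mu\inv=\{\,w\in W(X)\mid t(w)\in\ker q\,\}=R(X)\cap W(X)$. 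Finally, if $u,v\in W(X)$ with $(u)\mu=(v)\mu$, then $(v)i_X\in W(X)$ and, since $t(v)$ is a reduced positive word, $t((v)i_X)=i_X(t(v))$, so $\mu((v)i_X)=\overline{i_X}((v)\mu)=((v)\mu)\inv$; consequently $\mu(u\circ(v)i_X)=(u)\mu\cdot((v)\mu)\inv=1_{G_X}$, so $u\circ(v)i_X\in R(X)\cap W(X)$.

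\emph{Main obstacle.} The one genuinely delicate feature is that $i_X$ reverses the order of the letters, so it must be handled as an anti-automorphism rather than a homomorphism; this is what makes the $i_X$-invariance of the normal subgroup $R(X)$ and the identification of $\overline{i_X}$ with inversion require a little care. Everything else is routine bookkeeping with the universal properties of the free monoid and the free group.
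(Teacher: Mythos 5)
Your proposal is correct and follows essentially the same route as the paper: descend $i_X$ to $G_X$ and identify it with inversion, check $\mu=tq$ is a monoid homomorphism, obtain surjectivity by replacing each negative letter $\hat{x}$ by the positive letter $(x)i_X$, and use injectivity of $t$ to get $(1_{G_X})\mu\inv=R(X)\cap W(X)$, from which the final claim follows by multiplying $(u)\mu$ with $((v)\mu)\inv$. You merely spell out in more detail what the paper leaves implicit (the $i_X$-invariance of $R(X)$ via the anti-automorphism property and the concluding computation), so there is nothing to correct.
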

\begin{proof}
The subgroup $R(X)$ is $i_X$-invariant by definition; in $G_X$ we have $(R(X)(x_1, \dots, x_n))i_X = R(X)((x_1)i_X, \dots, (x_n)i_X) = R(X)(\hat{x_1}, \dots, \hat{x_n})$, showing that $i_X$ is the inversion map. Now (a) is trivial, as $q$ is a group homomorphism. Let $w=(w_1, \dots, w_n) \in G(X)$ be a representative of $R(X)w \in G_X$; then each $w_i$ is either of the form $x_i$ or of the form $\hat{x_i}$ for some $x_i \in X$. Define
\[
w'_i := \begin{cases*}
		w_i & \text{if $w_i \in X$}\\
		(w_i)i_X & \text{if $w_i \not \in X$}
		\end{cases*}
\]
and set $w':=(w'_1,\dots,w'_n)$; in particular $R(X)w_i = R(X) w'_i$ for every $i \in \{1,\dots,n\}$. Then $w' \in W(X)$ and $R(X)w=R(X)w'$, showing that $\mu$ is surjective.\\
Note that $t$ is injective, as $W(X)$ contains no non-trivial inverses of elements of $G(X)$, so $(1_{G_X})\mu \inv = R(X) \cap W(X)$ holds. Take now $u=(u_1,\dots,u_n), \, v=(v_1,\dots,v_m) \in W(X)$ with $(u)\mu = (v)\mu$, then we get
\[
(u_1,\dots,u_n)\circ (v_1,\dots,v_m)i_X \in R(X) \cap W(X)
\]
which is (b).
\end{proof}
The morphism $\mu$ induces also a map
\begin{center}
\begin{tikzcd}
\nu: S_X \ar[r] & W(G_X) \\[-25pt]
(x_1,\dots,x_n) \ar[r,mapsto] & ((x_1)\mu, \dots, (x_n)\mu) =(R(X)x_1, \dots, R(X)x_n),
\end{tikzcd}
\end{center}
which satisfies $\nu \Pi_{G_X} = \mu$; we will often write simply $S_X \subseteq W(G_X)$ in place of $(S_X)\nu \subseteq W(G_X)$.\\
We need the following lemma.
\begin{lemma}
\label{lemma:intersection of impartial subgr}
Let $(\LL,D, \Pi, i)$ be a partial group and $(\LL_k, D_k, \Pi_k, i_k)_{k \in K}$ a family of impartial subgroups of $\LL$ indexed over a set $K$. Set
\[
\M := \bigcap_{k \in K} \LL_k,
\]
then there is a unique structure of partial group over $\M$, namely $(\M, D_\M, \Pi_\M, i_\M)$, such that the following are satisfied:
\begin{itemize}
\item[(i)] $(\M, D_\M, \Pi_\M, i_\M)$ is an impartial subgroup of $\LL$,
\item[(ii)] if $\N=(\N,D_\N, \Pi_\N, i_\N)$ is an impartial subgroup of each of the $\LL_k$, then it is an impartial subgroup of $\M$.
\end{itemize}
We define $\M$ as the \emph{intersection} of the impartial subgroups $\LL_k$ in $\LL$.

\begin{proof}
Let $D_\M := \bigcap_{k \in K} D_k$, $\Pi_\M := \Pi \vert _{D_\M}$ and $i_\M := i \vert_{D_\M}$; 
we prove that $(\M, D_\M, \Pi_\M, i_\M)$ has the desired properties. Property (i) clearly follows once we show that $(\M, D_\M, \Pi_\M, i_\M)$ is a partial group. According to our definitions, $\M \subseteq D_k$ for every $k \in K$, so $\M \subseteq D_\M$. Let $u \circ v \in D_\M$, then $u \circ v \in D_k$ for every $k \in K$, so $u,v \in D_k$ for all $k$; this gives $u,v \in D_\M$ and axiom (1) of the definition of a partial group holds. Axiom (2) is trivial, since $\Pi_\M$ is defined as the restriction of $\Pi$. Suppose now that $u \circ v \circ w \in D_\M$, then $u \circ v  \circ w \in D_k$ for all $k \in K$, so 
\[
u \circ (v)\Pi_k \circ w \in D_k \quad \text{and} \quad (u \circ (v)\Pi_k \circ w) \Pi_k = (u \circ v \circ w) \Pi_k \qquad \forall k \in K
\]
Since $\Pi_\M = \Pi \vert _{D_\M} = \Pi_k \vert_{D_\M}$ for every $k \in K$, we have
\[
u \circ (v)\Pi_\M \circ w \in D_k \quad \text{and} \quad (u \circ (v)\Pi_\M \circ w) \Pi_\M = (u \circ v \circ w) \Pi_\M
\]
which is axiom (3). Let now $u \in D_\M$, then again $u \in D_k$ for every $k \in K$ and therefore
\[
(u)i_k \circ u \in D_k \quad \text{and} \quad ((u)i_k \circ u) \Pi_k = 1_\LL \qquad \forall k \in K.
\]
Thus, reasoning as for axiom (3) we get again
\[
(u)i_\M \circ u \in D_\M \quad \text{and} \quad ((u)i_\M \circ u) \Pi_\M = 1_\LL=1_\M
\]
which is axiom (4). This proves (i).\\
Suppose now that $\N$ is as in (ii), then for every $k \in K$
\[
D_\N \subseteq D_k, \quad \Pi_\N = \Pi_k \vert_\N = \Pi_\M \vert_{D_\N} \quad \text{and} \quad i_\N = i_k \vert_{D_\N} = i_\M \vert_{D_\N},
\]
which implies that the inclusion map $\N \hookrightarrow \M$ is a morphism of partial groups. This proves (ii).
\end{proof}
\end{lemma}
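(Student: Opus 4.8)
The plan is to take the obvious candidate, $D_\M := \bigcap_{k \in K} D_k$ together with $\Pi_\M := \Pi|_{D_\M}$ and $i_\M := i|_\M$, and to check it has the stated properties. Since all three structure maps are restrictions of those of $\LL$, property (i) will be automatic once $(\M, D_\M, \Pi_\M, i_\M)$ is seen to be a partial group, and property (ii) will reduce to recognising a set inclusion as a morphism of partial groups; uniqueness will then follow from the fact that (i) pins down the product and inversion and (ii) pins down the domain.

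First I would check the four axioms of Definition \ref{def partial gr} for $(\M, D_\M, \Pi_\M, i_\M)$, in each case using that the corresponding axiom holds in every $\LL_k$ and then intersecting over $k$. Axioms (1) and (2) are immediate: $\M \subseteq \LL_k \subseteq D_k$ for all $k$ gives $\M \subseteq D_\M$, the divisibility clause $u \circ v \in D_\M \Rightarrow u,v \in D_\M$ is obtained by running the same implication in each $D_k$, and $\Pi_\M|_\M = \mathrm{id}$ because $\Pi|_\M = \mathrm{id}$. For axiom (3), if $u \circ v \circ w \in D_\M$ then $u \circ v \circ w \in D_k$ for all $k$, so $u \circ (v)\Pi_k \circ w \in D_k$ with equal product; as $\Pi_k = \Pi|_{D_k}$ we have $(v)\Pi_k = (v)\Pi$, and since $(v)\Pi$ occurs as an entry of a word in $W(\LL_k)$ for every $k$ it lies in $\bigcap_k \LL_k = \M$, whence $u \circ (v)\Pi \circ w \in D_\M$, and the product identity follows from axiom (3) in $\LL$. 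Axiom (4) is handled the same way, using that $\M$ is closed under $i$ and that $1_\M = (\emptyset)\Pi_\M = 1_\LL$. This shows $(\M, D_\M, \Pi_\M, i_\M)$ is a partial group with domain inside $D \cap W(\M)$ and structure maps restricted from $\LL$, i.e.\ an impartial subgroup of $\LL$, which is (i).

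For (ii), if $\N = (\N, D_\N, \Pi_\N, i_\N)$ is an impartial subgroup of each $\LL_k$, then $\N \subseteq \M$, $D_\N \subseteq D_k$ for all $k$ so $D_\N \subseteq D_\M$, and $\Pi_\N$, $i_\N$ are restrictions of $\Pi_\M$, $i_\M$; hence the set inclusion $\N \hookrightarrow \M$ is a morphism of partial groups and $\N$ is an impartial subgroup of $\M$, for instance by Lemma \ref{equiv-def-impar-subgr}. For uniqueness, property (i) forces the inversion and product of any admissible structure on $\M$ to be the restrictions $i|_\M$ and $\Pi$, so only the domain can differ; feeding the structure constructed above (which is an impartial subgroup of every $\LL_k$) into property (ii) of a competing admissible structure shows the latter's domain contains $\bigcap_k D_k$, while the reverse inclusion comes from observing that an admissible structure is itself an impartial subgroup of each $\LL_k$, so its domain must sit inside $\bigcap_k D_k$.

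I do not expect a serious obstacle: the verification is routine, and the only spot demanding care is axiom (3), where one must check that $(v)\Pi$ returns to $\M$ before concluding $u \circ (v)\Pi \circ w \in D_\M$. If anything, the uniqueness clause is the subtler point, since it quietly relies on the observation that an admissible structure --- a priori only an impartial subgroup of $\LL$ --- is in fact an impartial subgroup of each $\LL_k$, which is precisely what licenses the reverse inclusion of domains.
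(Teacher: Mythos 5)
Your construction and the verification of axioms (1)--(4), of property (i), and of property (ii) coincide with the paper's own proof, including the one point that genuinely needs care in axiom (3), namely that $(v)\Pi$ lies back in $\M$ before one can conclude $u\circ(v)\Pi\circ w\in D_\M$; that part of your proposal is correct and follows the same route as the paper.

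The gap is in your uniqueness argument, at exactly the step you flag but do not justify: the claim that a structure $(\M,D',\Pi',i')$ satisfying (i) and (ii) is automatically an impartial subgroup of each $\LL_k$. Nothing in (i) or (ii) yields $D'\subseteq D_k$. Property (i) only gives $D'\subseteq D\cap W(\M)$ with restricted operations, and since the $\LL_k$ are merely \emph{impartial} subgroups, their domains $D_k$ may be much smaller than $D\cap W(\LL_k)$; property (ii) produces only lower bounds on $D'$ (it forces $D_\N\subseteq D'$ for every common impartial subgroup $\N$, in particular $D_\M\subseteq D'$ once the constructed structure is fed in), never upper bounds. In fact the reverse inclusion cannot be rescued from (i) and (ii) alone: take $\LL=\mathbb{Z}/3\mathbb{Z}$ and a single $\LL_1$ given by the image of the free partial group on one generator under $a\mapsto x$, so that $\LL_1=\LL$ as a set but $D_1\subsetneq W(\LL)$ (for instance $(x,x)\notin D_1$); then $\M=\LL$, and both $(\M,D_1,\ldots)$ and the full group structure $(\M,W(\M),\Pi,i)$ satisfy (i) and (ii). So pinning down the domain from above requires an extra hypothesis --- e.g.\ that the structure is itself an impartial subgroup of each $\LL_k$, or that it is the smallest structure satisfying (i) and (ii) --- and cannot be deduced as you assert. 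It is worth noting that the paper's own proof only establishes existence and never addresses the uniqueness clause at all, and the subsequent application in Lemma \ref{free-part-gr-str-set} uses only the explicit intersection construction (and its minimality obtained there by intersecting over all admissible subgroups), not uniqueness in the form stated here.
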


We follow the notation introduced in Section \ref{section:constr-free-part-gr}
\begin{lemma}\label{free-part-gr-str-set}
	Suppose that $(X,S_X,i_X) \in Set^s$, then there exist unique sets $\LL \subseteq G_X$ and $D \subseteq W(\LL) \subseteq W(G_X)$ such that:
	\begin{enumerate}
		\item $X \subseteq \LL$, $S_X \subseteq D$ and $(\LL)i_{G_X} \subseteq \LL$;
		\item denoting $\Pi:= \Pi_{G_X} \vert _{D}$ and $j=i_{G_X} \vert _{\LL}$, then $(D)\Pi \subseteq \LL$ and $(\LL, D, \Pi, j)$ is a partial group; in particular, for any word $s = (x_1,\dots,x_n) \in S_X$ we have $(s)\Pi \in \LL$;
		\item any other pair of sets $\LL' \subseteq G_X$, $D' \subseteq W(\LL')$ satisfying (1) and (2) is such that $(\LL,D,\Pi,j)$ is an impartial subgroup of $(\LL',D',\Pi',j')$, where $\Pi'$ and $j'$ are the restrictions of, respectively, $\Pi_{G_X}$ to $D'$ and $i_{G_X}$ to $\LL'$.
	\end{enumerate}
\end{lemma}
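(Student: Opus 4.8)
The plan is to obtain $(\LL,D)$ as the intersection of all substructures of $G_X$ that already satisfy conditions (1) and (2), invoking Lemma \ref{lemma:intersection of impartial subgr}. Regard the group $G_X$ as the partial group $(G_X, W(G_X), \Pi_{G_X}, i_{G_X})$. Since the domain of this ambient partial group is all of $W(G_X)$, a pair $(\LL', D')$ with $\LL' \subseteq G_X$, $D' \subseteq W(\LL')$ and structure maps $\Pi' := \Pi_{G_X}\vert_{D'}$, $j' := i_{G_X}\vert_{\LL'}$ is a partial group if and only if it is an impartial subgroup of $G_X$; consequently the pairs satisfying (1) and (2) are exactly the impartial subgroups $(\LL_k, D_k, \Pi_k, i_k)$ of $G_X$ with $X \subseteq \LL_k$, $S_X \subseteq D_k$ and $(\LL_k)i_{G_X} \subseteq \LL_k$. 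Let $\mathcal{E}$ denote the family of all such pairs; it is a subfamily of $\mathcal{P}(G_X) \times \mathcal{P}(W(G_X))$, hence a set, and it is nonempty because $(G_X, W(G_X))$ itself belongs to it.

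Next I would apply Lemma \ref{lemma:intersection of impartial subgr} to the family $(\LL_k)_{k \in \mathcal{E}}$ and set $\LL := \bigcap_{k} \LL_k$, $D := \bigcap_{k} D_k$, $\Pi := \Pi_{G_X}\vert_D$, $j := i_{G_X}\vert_\LL$: the lemma guarantees that $(\LL, D, \Pi, j)$ is an impartial subgroup of $G_X$. Checking (1) is then immediate, since $X \subseteq \LL_k$, $S_X \subseteq D_k$ and $(\LL_k)i_{G_X} \subseteq \LL_k$ for every $k$ pass to the intersection. For (2), that $(\LL, D, \Pi, j)$ is a partial group is part of the conclusion of the intersection lemma, and $(D)\Pi \subseteq \LL$ holds because any $w \in D$ lies in $D_k$ for all $k$, whence $(w)\Pi = (w)\Pi_k \in \LL_k$ for all $k$; the clause $(s)\Pi \in \LL$ for $s \in S_X$ is then a special case of $S_X \subseteq D$.

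For (3), let $(\LL', D')$ be any pair satisfying (1) and (2). By the identification made above it is an impartial subgroup of $G_X$ lying in $\mathcal{E}$, so $\LL \subseteq \LL'$ and $D \subseteq D'$; since $\Pi$ and $j$ are by construction the restrictions to $D$ and $\LL$ of $\Pi_{G_X}$ and $i_{G_X}$, they coincide with $\Pi'\vert_D$ and $j'\vert_\LL$, and together with $D \subseteq D' \cap W(\LL)$ and the fact that $(\LL,D,\Pi,j)$ is a partial group this is exactly the assertion that $(\LL,D,\Pi,j)$ is an impartial subgroup of $(\LL', D', \Pi', j')$. Uniqueness follows formally: if two pairs both satisfy (1)--(3), then applying (3) in each direction makes each an impartial subgroup of the other, forcing equality of the underlying sets and of the domains, and the structure maps agree since in both cases they are restrictions of those of $G_X$.

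I do not expect a genuinely hard step here: this is the standard ``smallest substructure $=$ intersection of all substructures'' argument, and the technical engine (closure of the impartial-subgroup notion under arbitrary intersections) has already been isolated in Lemma \ref{lemma:intersection of impartial subgr}. The only points that need a little care are the bookkeeping identification between ``pair satisfying (1) and (2)'' and ``impartial subgroup of $G_X$ of the prescribed shape'', which rests on the fact that $G_X$, being a group, has domain all of $W(G_X)$, and the verification that the intersected domain $D = \bigcap_k D_k$ still contains $S_X$ and still sits inside $W(\LL)$; both are routine.
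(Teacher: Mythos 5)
Your proposal is correct and follows essentially the same route as the paper: note that $(G_X, W(G_X))$ satisfies (1) and (2), identify the pairs satisfying (1)--(2) with impartial subgroups of $G_X$ of the prescribed form, and take the intersection of all of them via Lemma \ref{lemma:intersection of impartial subgr}, with minimality giving (3). Your explicit treatment of uniqueness (mutual containment via (3)) is slightly more detailed than the paper's, but the argument is the same.
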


\begin{proof}
	We show that the group $G_X$ (so with $W(G_X)$ as domain) satisfies (1) and (2) and, afterwards, that properties (1) and (2) are stable under taking intersections of impartial subgroups.\\
	For $G_X$ we have $\Pi = \Pi_{G_X}$, $j=i_{G_X}$ and $(G_X,W(G_X),\Pi, j)$ clearly satisfies (1) and (2). Suppose now to have a family $\{\LL_k\}_{k \in K}$ of subsets $\LL_k \subseteq G_X$ with associated subsets $D_k \subseteq W(\LL_k)$, indexed over an index-set $K$, such that all pairs $(\LL_k, D_k)_{k \in K}$ satisfy (1) and (2) and set $\Pi_k := \Pi_{G_X} \vert_{D_k}$ and $j_k := i_{G_X} \vert_{\LL_k}$. Then by (2) each $(\LL_k, D_k, \Pi_k, j_k)$ is an impartial subgroup of $G_X$. Let $\M = (\M, D_\M, \Pi_\M, i_\M)$ be the intersection impartial subgroup of $G_X$, taken over all the $\{\LL_k\}_{k \in K}$, as defined in Lemma \ref{lemma:intersection of impartial subgr}. Clearly $X \subseteq \M$, $S_X \subseteq D_\M$ and $(\M)i_\M \subseteq \M$; as $(D_k)\Pi_k \subseteq \LL_k$ for every $k \in K$, we also have $(D_\M)\Pi_\M \subseteq \M$. Thus $\M$ satisfies properties (1) and (2).\\
	Let now $\{\LL_k\}_{k \in K}$ be the set of all impartial subgroups $(\LL_k, D_k, \Pi_k, j_k)$ of $G_X$ satisfying (1) and (2) and $(\LL, D, \Pi, j)$ be the intersection impartial subgroup of $G_X$ taken over the family $\{\LL_k\}$. Then it is a partial group satisfying (1) and (2) and, by definition, it satisfies also property (3).
\end{proof}
What we have proven is, actually, that there exists a unique smallest impartial subgroup of $G_X$ satisfying the properties (1) and (2).\\
Note that a morphism $f:(X,S_X,i_X) \fto (Y,S_Y,i_Y)$ in $Set^s$ induces a map $\tilde{f}: G_X \fto G_Y$ in the following way. First we build $f^+: G(X) \fto G(Y)$ by the universal property of the free group $G(X)$, as in the following diagram
\begin{center}
\begin{tikzcd}
X \ar[r,hook] \ar[d,"f"] & G(X) \ar[dl,dashed,"f^+"] \\
G(Y)
\end{tikzcd}
\end{center}
and note that $f^+$ restricts to the map $f^*: W(X) \fto W(Y)$. Then $f^+$ is a group homomorphism and the properties of morphisms in $Set^s$ yield the commutativity of the diagram (\ref{commutativity-morpphism-inversion maps}), so that $(R(X))f^+ \subseteq R(Y)$ holds; thus we get a well-defined group homomorphism $\tilde{f}$ induced by $f^+$.
\begin{definition}
Let $(X,S_X,i_X), (Y,S_Y,i_Y) $ be objects of $Set^s$ and $f: (X,S_X,i_X) \fto (Y,S_Y,i_Y)$ be a morphism; let also $\LL=(\LL, D, \Pi, j)$ be the unique smallest impartial subgroup of $G_X$ satisfying (1) and (2) of Lemma \ref{free-part-gr-str-set} with respect to $(X,S_X,i_X)$. Define the functor $F: Set^s \fto Part$ by
\[
F: (X,S_X,i_X) \mapsto (\LL, D, \Pi, j) \quad \text{and} \quad  fF:= \tilde{f} \vert_\LL,
\]
where $\tilde{f}: G_X \fto G_Y$ is the map defined above.
\end{definition}
It is a straightforward computation to verify that $F$ is a functor; as a consequence of the next lemma, we consider $F$ the free-construction functor. In order to simplify notation we simply write $X$ for an object $(X,S_X,i_X) \in Set^s$ whenever the context leaves no ambiguity.

\begin{lemma}\label{lemma-it is free}
	The following hold.
	\begin{itemize}
		\item[(a)] Let $(X,S_X,i_X)$ be an object in $Set^s$, then the partial group $(X)F =: (\LL, D, \Pi, j)$ defined in Lemma \ref{free-part-gr-str-set} is the free partial group over the object $(X,S_X,i_X)$.
		\item[(b)] The forgetful functor $U: Part \fto Set^s, \, (\M,D_\M,\Pi_\M,i_\M)U=(\M,D_\M,i_\M)$ is a right adjoint to the functor $F: Set^s \fto Part$.
	\end{itemize}
\end{lemma}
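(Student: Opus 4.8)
The plan is to prove part (a) by exhibiting, for each object $(X,S_X,i_X)$ of $Set^s$, a unit morphism making $(X)F$ the free partial group over it, and then to obtain part (b) formally from (a) together with naturality of the unit.

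\textbf{The unit and uniqueness.} I take $\eta = \eta_{(X,S_X,i_X)} \colon (X,S_X,i_X) \fto (X)FU = (\LL,D,j)$ to be the map $x \mapsto R(X)x$, which lands in $X \subseteq \LL$ by Lemma \ref{free-part-gr-str-set}(1). It is a morphism in $Set^s$: $(S_X)\eta^* = (S_X)\nu \subseteq D$ again by Lemma \ref{free-part-gr-str-set}(1), and $\eta$ intertwines $i_X$ with $j = i_{G_X}\vert_\LL$ since $R(X)\hat x = R(X)(x)i_X$ in $G_X$. Given a partial group $\M = (\M,D_\M,\Pi_\M,i_\M)$ and a $Set^s$-morphism $h \colon (X,S_X,i_X) \fto (\M,D_\M,i_\M)$, I must produce a unique morphism of partial groups $\tilde h \colon \LL \fto \M$ with $\tilde h\vert_X = h$. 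Uniqueness is immediate from Lemma \ref{free-part-gr-str-set}(3): since $(\LL,D)$ is the smallest impartial subgroup of $G_X$ with the stated closure properties, every element of $\LL$ arises from $X \cup (X)j \cup \{1\}$ by finitely many applications of $\Pi$ to words of $D$, and a partial-group morphism is forced on $1$, on inverses, and on products of domain words.

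\textbf{Existence.} The key point is a lifting lemma: for every $w = (\ell_1,\dots,\ell_m) \in D$ there exist $w_1,\dots,w_m \in D \cap W(X)$ with $(w_k)\mu = \ell_k$ for all $k$ and $w_1 \circ \cdots \circ w_m \in D$. I would prove this by induction along the construction of $(\LL,D)$ used in the proof of Lemma \ref{free-part-gr-str-set}: the cases where $w$ lies in $S_X$, is a singleton, is a subword, or arises by axiom (3) or axiom (4) of Definition \ref{def partial gr} are each dispatched by concatenating, restricting, or $i_X$-reflecting the lifts given by induction, using that subwords and $i_X$-reflections of words in $D \cap W(X)$ are again in $D \cap W(X)$. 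In particular every $\ell \in \LL$ has a representative $w_\ell \in D \cap W(X)$ with $(w_\ell)\mu = \ell$, and I define $(\ell)\tilde h := \big((w_\ell)h^*\big)\Pi_\M$. A parallel induction shows $(w)h^* \in D_\M$ for every $w \in D \cap W(X)$, using $(S_X)h^* \subseteq D_\M$, closure of $D_\M$ under subwords (axiom (1)), axiom (4) in $\M$, and the fact that $h$ intertwines $i_X$ with $i_\M$; so $(\ell)\tilde h$ is at least defined.

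\textbf{The main obstacle: well-definedness.} One must check that $(\ell)\tilde h$ does not depend on the choice of $w_\ell$, i.e. that $u,v \in D \cap W(X)$ with $(u)\mu = (v)\mu$ force $\big((u)h^*\big)\Pi_\M = \big((v)h^*\big)\Pi_\M$. By Lemma \ref{lemma:description of G_X}(b) this hypothesis gives $u \circ (v)i_X \in R(X) \cap W(X)$; equivalently $u$ and $v$ reduce to one and the same word by repeatedly deleting an adjacent pair $(x,(x)i_X)$, the reduced form being unique because $G_X$ is a free product of copies of $\mathbb{Z}$ and of $\mathbb{Z}/2\mathbb{Z}$. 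Performing one such deletion inside $\M$, after applying $h^*$, removes a subword $\big((x)h, ((x)h)i_\M\big)$ which by axiom (4) lies in $D_\M$ with product $1_\M$: this is an instance of axiom (3) followed by the elementary ``cancellation of units'' lemma for partial groups (\cite[Section 1]{Chermak:2022}), and it preserves both membership in $D_\M$ and the value of $\Pi_\M$. Hence both $u$ and $v$ give the common value $\big((r)h^*\big)\Pi_\M$ for the reduced word $r$, so $\tilde h$ is well defined; that it is a morphism of partial groups — $(D)\tilde h^* \subseteq D_\M$ and $\Pi \cdot \tilde h = \tilde h^* \cdot \Pi_\M$ — then follows from the lifting lemma and repeated application of axiom (3) in $\M$. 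Finally, (b) follows: by (a), $\beta \mapsto \eta \cdot (\beta)U$ is a bijection $Hom_{Part}((X)F,\M) \to Hom_{Set^s}((X,S_X,i_X),\M U)$, natural in $\M$, and natural in $(X,S_X,i_X)$ by the routine identity $\eta \cdot (fF)U = f \cdot \eta$, which holds because $fF$ is the restriction to $\LL$ of the homomorphism $G_X \fto G_Y$ induced by the free-group map $f^+$ extending $f^*$; thus $F \dashv U$.
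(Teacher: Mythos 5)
Your overall architecture (unit $=$ the inclusion of $X$ into $(X)F$, reduce everything to constructing a unique extension $\tilde h$, then deduce (b) formally) is the same as the paper's, and your free-reduction/unit-cancellation computation in $\M$ is sound (it mirrors the paper's use of Lemma \ref{lemma:description of G_X}(b) and the fact that words in $R(X)\cap(D_\M)(h^*)\inv$ map to $1_\M$). The genuine gap is in how you justify the two pillars of existence: the lifting lemma (every word of $D$ has componentwise representatives in $D\cap W(X)$ whose concatenation lies in $D$) and the claim $(D\cap W(X))h^*\subseteq D_\M$. You prove both ``by induction along the construction of $(\LL,D)$ used in the proof of Lemma \ref{free-part-gr-str-set}'' --- but that lemma does not construct $(\LL,D)$ generatively: it defines it as the \emph{intersection} of all impartial subgroups of $G_X$ satisfying (1) and (2). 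There is no well-founded generation to induct along, and establishing one (i.e.\ showing that the inductive closure of $X$ and $S_X$ under the partial-group axioms is itself a partial group satisfying (1),(2)) is essentially as much work as the whole existence proof. Moreover, even granting a generative description, the induction as sketched does not close: an axiom-(3) substitution replaces a subword $v$ by $(v)\Pi$, which in general lies in $\LL\setminus X$, so the intermediate words leave $W(X)$ and your invariant (``lies in $D\cap W(X)$ and maps into $D_\M$'') is not preserved step by step; one is forced into a simultaneous induction over all of $D$, defining $\tilde h$ on all of $\LL$ at the same time, which you do not carry out. The same unproved generation claim underlies your uniqueness argument (``every element of $\LL$ arises from $X$ by finitely many applications of $\Pi$''), though uniqueness is easily patched: the set where two extensions agree, with domain cut from $D$, is an impartial subgroup of $G_X$ satisfying (1) and (2), so minimality forces it to contain $\LL$.

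The standard repair is exactly the paper's device: instead of inducting, pull $D_\M$ back along $h^*$ and set $\LL_0:=((D_\M)(h^*)\inv)\mu$ with $D_0$ the words in $W(\LL_0)$ admitting componentwise $\mu$-preimages whose concatenation lies in $(D_\M)(h^*)\inv$; one checks directly that $(\LL_0,D_0)$ is an impartial subgroup of $G_X$ containing $X$ and $S_X$, so Lemma \ref{free-part-gr-str-set}(3) gives $\LL\subseteq\LL_0$ and $D\subseteq D_0$ in one stroke, which is precisely the existence of suitable lifts, and the factorization of $h^*\Pi_\M$ through $\mu$ (your well-definedness step) then runs as you wrote. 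Your part (b), including the naturality identity $\eta\cdot(fF)U=f\cdot\eta$, is fine and matches the paper's sketch, but as it stands part (a) of your argument rests on an induction that has no foundation in the construction actually provided.
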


\begin{proof}
	\begin{itemize}
		\item[(a)] We follow the notation of Lemma \ref{free-part-gr-str-set}. Consider the inclusion map $\iota: (X,S_X,i_X) \hookrightarrow (X)FU = (\LL, D,j)$ naturally induced by the inclusion $X \subseteq \LL$; note here that $\iota^*$ over $S_X$ is the map $\nu$ defined after Lemma \ref{lemma:description of G_X}. Being the free partial group over the object $(X,S_X,i_X)$ means that for any other partial group $\M=(\M, D_\M, \Pi_\M, i_\M)$ and any morphism $f : (X,S_X,i_X) \fto (\M)U = (\M, D_\M,i_\M)$ in the category $Set^s$ there exists a unique morphism of partial groups $\tilde{f}: \LL \fto \M$ such that $f= \iota \cdot \tilde{f}U$, that is that makes the following diagram commutative.
		\begin{equation*}\label{diagram of lemma on free partial gr}
		\tag{$\Theta$}
			\begin{tikzcd}
			(X,S_X,i_X) \ar[rr,hook,"\iota"] \ar[rd,"\forall f"] &[-10pt] &[-10pt] (\LL,D, j) \ar[dl, dashed, "\tilde{f}U"] \\
				& (\M, D_\M, i_\M)
			\end{tikzcd}
		\end{equation*}
		
		Let $\mu: W(X) \fto G(X)$ be the monoid homomorphism defined in \ref{lemma:description of G_X}, consider the following diagram
		\begin{center}
				\begin{tikzcd}
				W(X) \ar[r,"f^*"] \ar[d,"\mu"] & W(\M) \supseteq D_\M \\
				G_X
				\end{tikzcd}
				\end{center}
		and define $\LL_0 := ((D_\M)(f^*)\inv)\mu \subseteq  G_X$. Since $\mu$ commutes with $i_X$ and $f$ is a morphism of $Set^s$, the inversion map on $G_X$ restricts to an involutory bijection on $\LL_0$. Denote $G_X=(G_X,W(G_X),\Pi_{G_X}, i_{G_X})$ and set
		\[
		D_0 := \{ w=(w_1,\dots,w_n) \in W(\LL_0) \mid \exists w'_i \in (w_i)\mu\inv \text{ s.t. } w'_1 \circ \dots \circ w'_n \in (D_\M)(f^*)\inv \};
		\]
		we show that $(\LL_0, D_0, \Pi_0, i_0)$ is a partial group, where $\Pi_0$ and $i_0$ are the restrictions to $D_0$, respectively $\LL_0$, of $\Pi_{G_X}$ and $i_{G_X}$. Note that, $\mu$ being a monoid homomorphism, for any $w \in D_0$ there exists $w' \in (D_\M)(f^*)\inv$ such that $(w)\Pi_0 = (w)\Pi_{G_X} = (w')\mu$; thus $(D_0)\Pi_0 \subseteq \LL_0$. Let now $u=(u_1,\dots,u_k), \, v=(v_1,\dots,v_r), \, w=(w_1,\dots,w_s) \in W(\LL_0)$; suppose that $u \circ v \in D_0$ and take preimages $u'_i, v'_j \in W(X)$ along $\mu$ with $u'_1 \circ \dots \circ u'_k \circ v'_1 \circ \dots \circ v'_r \in (D_\M)(f^*)\inv$, then $D_\M \ni (u'_1 \circ \dots \circ u'_k \circ v'_1 \circ \dots \circ v'_r)f^* = (u'_1 \circ \dots \circ u'_k)f^* \circ (v'_1 \circ \dots \circ v'_r)f^*$. Hence $(u'_1 \circ \dots \circ u'_k)f^* \in D_\M$ as well as $(v'_1 \circ \dots \circ v'_r)f^* \in D_\M$, so $u'_1 \circ \dots \circ u'_k, \, v'_1 \circ \dots \circ v'_r \in (D_\M)(f^*)\inv$, which implies $u,v \in D_0$. Axioms (1) and (2) of Definition \ref{def partial gr} now easily follow. In the same way one proves axioms (3) and (4).\\
		By restricting $f^*$ to the preimage $(D_\M)(f^*)\inv$ we may define the composition $f^* \Pi_\M$ as in the diagram below
		\begin{center}
						\begin{tikzcd}
						(D_\M)(f^*)\inv \ar[r,"f^*"] \ar[d,"\mu"] &  D_\M \ar[r,"\Pi_\M"] & \M \\
						\LL_0
						\end{tikzcd}
		\end{center}		
		Suppose there exist words $(x_1, \dots, x_n), \, (y_1,\dots, y_m) \in (D_\M)(f^*)\inv$ with $ (x_1, \dots, x_n)\mu = (y_1,\dots, y_m)\mu$, then by Lemma \ref{lemma:description of G_X}(b) $(x_1, \dots, x_n) \circ ( (y_1,\dots, y_m)i_X) \in R(X) \cap W(X)$. As a consequence $(x_1, \dots, x_n)f^*$ and $(y_1,\dots, y_m)f^*$ differ by an element of $(R(X))f^* \cap D_\M$. Since $(R(X) \cap (D_\M)(f^*)\inv)f^* \Pi_\M =\{ 1_\M \}$, we get $(x_1, \dots, x_n)f^*\Pi_\M =(y_1,\dots, y_m)f^*\Pi_\M$, so $f^* \Pi_\M$ (restricted to $(D_\M)(f^*)\inv$) factors via $\mu$ to a map $\tilde{f}_0 : \LL_0 \fto \M$.
		An immediate consequence of the definition of $D_0$ is that $(D_0)\tilde{f}_0^* \subseteq D_\M$, whereas  the commutativity of the diagram
		\begin{center}
		\begin{tikzcd}
		D_0 \ar[d,"\tilde{f}_0^*"]   \ar[r,"\Pi_0"] & \LL_0 \ar[d,"\tilde{f}_0"]   \\ 
		D_\M \ar[r,"\Pi_\M"]		 \ar[r,mapsto]  & \M
		\end{tikzcd}
		\end{center}
		follows from the definition of $\tilde{f}_0$, which is therefore a morphism of partial groups.\\
		Clearly $X \subseteq \LL_0$ and $S_X \subseteq D_0$; then Lemma \ref{free-part-gr-str-set} shows that also $\LL$ is embedded in $\LL_0$ and $D \subseteq D_0$, so we may restrict $\tilde{f}_0$ to a morphism
		\[
		\tilde{f} : \LL \fto \M,
		\]
		which makes (\ref{diagram of lemma on free partial gr}) commutative by construction. Uniqueness of $\tilde{f}$ follows since any other morphism of partial groups $g$ making (\ref{diagram of lemma on free partial gr}) commutative has to agree with $\tilde{f}$ over every element of $X$ in $\LL$ and therefore induces a map $g^*: W(X) \fto W(\M)$ equal to $f^*$.
		\item[(b)] The way of proceeding is exactly as in Lemma \ref{adjoint functors free-forgetful-first}, so we only sketch the proof. Let $X=(X,S_X,i_X), \, Y=(Y,S_Y,i_Y) \in Ob(Set^s)$ and $\LL=(\LL,D_\LL, \Pi_\LL,i_\LL), \M=(\M,D_\M,\Pi_\M,i_\M)$ be partial groups. The bijections 
		\[
		\psi_{X,\LL} : Hom_{Set^s}(X,(\LL)U) \fto Hom_{Part}((X)F,\LL)
		\]
		are a consequence of (a). Let $h:Y \fto X$ be a morphism in $Set^S$ and $g: \LL \fto \M$ a morphism in $Part$, then we must show that the following diagram is commutative
		\begin{center} 
				\begin{tikzcd}
				Hom_{Set^*}(X,(\LL)U) \ar[d] \ar[r,"\psi_{(X,\LL)}"] &[+20pt]	Hom_{Part}((X)F,\LL) \ar[d] \\
				Hom_{Set^*}(Y,(\M)U) \ar[r,"\psi_{(Y,\M)}"] &	Hom_{Part}((Y)F,\M)
				\end{tikzcd}
				\end{center}
		where the vertical morphisms are pre- and post-composition. Denoting $(f)\psi_{(X,\LL)} :=\tilde{f}$ (and similarly for the entire collection of maps), it means showing
		\[
		hF \cdot \tilde{f} \cdot g = \widetilde{h \cdot f \cdot gU}
		\]
		and the argument here relies again on looking at the action on elements of $Y$.
	\end{itemize}
\end{proof}

\subsection{Relations in partial groups}
If $G$ is a group and $x,y \in G$, it is always possible to rewrite a relation $x=y$ in the form $x y\inv = 1_G$; all relations are indeed identifications of elements with the unit $1_G$. Given a set $R$ of relations on $G$, one can identify the smallest normal subgroup $N$ of $G$ containing $R$ and form $G/N$ and one says that the relations in $R$ are satisfied by $G/N$. This line of reasoning is, however, not so straightforward for partial groups. Let $\LL =(\LL,D,\Pi,i)$ be a partial group, $x,y \in \LL$ and suppose that we would like to find a quotient of $\LL$ fulfilling the relation $x=y$: a priori there is no reason why the word $x \circ y\inv$ should be in $D$, however assuming that we can form such a quotient of $\LL$, axiom (4) of Definition \ref{def partial gr} yields $x \circ x\inv \in D$ and $(x \circ x\inv)\Pi=1$, so the relation $x = y$ becomes equivalent to $x \circ y\inv \in D$ and $(x \circ y\inv)\Pi = 1$. This shows that adding the element $x \circ y\inv$ to the domain of $\LL$ is a necessary condition for the relation $x=y$ to make sense.\\
Observe that relations on a group $G$ can equivalently be defined as equalities $w=u$ between words $w,u \in W(G)$. This suggests the following definition.
\begin{definition}
	Let $\LL=(\LL, D, \Pi, i)$ be a partial group, then a \emph{relation} on $\LL$ is an element of $D$.
\end{definition}

We now show that every partial group is a quotient of a free partial group.
\begin{proof}[Proof of Proposition \ref{prop:every-pg-is-quot-free}]
	Let $\LL=(\LL,D,\Pi,i)$ be a partial group and $U: Part \fto Set^s, \, (\LL,D,\Pi,i) \mapsto (\LL,D,i)$ the forgetful functor. Set $\M=(\M,D_\M, \Pi_\M, i_\M)$ to be the free partial group over $(\LL,D,i)$, according to Lemmas \ref{free-part-gr-str-set} and \ref{lemma-it is free}, and define the set of relations of $\LL$ as
	\[
	R_0:=\{u \in D \mid (u)\Pi =1_\LL \}.
	\]
	Since $D \subseteq D_\M$ by the definition of $\M$, the canonical inclusion $\LL \hookrightarrow \M, \, x \mapsto x$ extends to a map $\phi: D \mapsto \M, \, u \mapsto (u)\phi := (u)\Pi_\M$; then set $R:=(R_0)\phi$ and let $\N=(\N,D_\N, \Pi_\N,i_\N)$ be the partial subgroup of $\M$ generated by $R$ in the sense of \cite[Lemmas 1.8(e) and 1.9]{Chermak:2022}. We wish to prove that $\LL$ is isomorphic to the quotient $\M / \N$.\\
	The universal property of the free object $\M$ yields a map $f: \M \fto\LL$ extending the identity over $\LL$ (in particular, $f$ is surjective); let $i: \N \fto \M$ be the inclusion map and consider the pair $(\M/\N,q) = coker(i)$, as in the following diagram
	\begin{center} 
	\begin{tikzcd}
		&	\N \ar[d,hook,"i"] \\
	\LL \ar[r,hook] \ar[dr,"id_\LL"']	& \M \ar[d,two heads, "f"] \ar[r,"q"] & \M / \N  \ar[dl,two heads,"\hat{f}"] \\
	& \LL
	\end{tikzcd}
	\end{center}
	The map $f$ acts in the following way: suppose that $u \in \M$ and write it by representatives in $\LL$, so $u=u_1 \circ \dots \circ u_n$ with $u_i \in \LL$, then we may think at $u \in D_\M$ since $u=(u_1,\dots,u_n)\Pi_\M$, and $u =(u)f^* \in D$ and $(u)f =(u)\Pi$, since $f$ extends the identity map. In particular, every element of $\M$ can be identified with an element of $D$.	The morphism $\hat{f}$ is induced by the universal property of the cokernel $\M / \N$, since for every element $v \in R$, $(v)f=(v)\Pi = 1_\LL$ and $R$ generates $\N$. As $f$ is surjective, $\hat{f}$ is so as well.\\
	Clearly $(D(\M/\N))\hat{f}^* \subseteq D$ as $\hat{f}$ is a morphism in $Part$; if $u \in D$, viewing $u=(u)f^* \in D$ together with the commutativity of the diagram above (namely $f=q \hat{f}$) yields $(D(\M/\N))\hat{f}^* = D$. Hence we are only left with showing injectivity of $\hat{f}$; note that for any $u \in D$ axioms (3) and (4) of the definition of a partial group yield $(u)i \circ (u)\Pi \in D$ and $((u)i \circ (u)\Pi)\Pi = 1_\LL$, so that recalling $D \subseteq D_\M$ 
	\[
	u \circ (u)i\Pi \circ (u)\Pi \in D \quad \text{and} \quad (u \circ (u)i\Pi \circ (u)\Pi)\Pi_\M = u
	\]
	Moreover, $u \circ (u)i\Pi \in \N$. Suppose now that $u,v \in \M$ are such that $(u)f=(v)f$; we have already seen that this means $(u)\Pi = (v)\Pi$ (viewing $u,v$ as elements of $D$), so that
	\[
	u \circ (u)i\Pi \circ (u)\Pi \quad \text{and} \quad v \circ (v)i\Pi \circ (v)\Pi
	\]
	can be regarded as words of $D_\M$ of length two, namely $(u \circ (u)i\Pi, (u)\Pi)$ and $(v \circ (v)i\Pi, (v)\Pi)$. Let $\R$ be the equivalence relation defining $\M / \N$ as in Section \ref{cocompleteness}, then $(u \circ (u)i\Pi) \; \R \; (v \circ (v)i\Pi)$, since both are in $\N$, and $(u)\Pi =(v)\Pi$. As
	\[
	(u \circ (u)i\Pi \circ (u)\Pi)\Pi_\M = u \quad \text{and} \quad (v \circ (v)i\Pi \circ (v)\Pi)\Pi_\M =v
	\]
	by (\ref{condition of sim}) in Section \ref{cocompleteness} we get $u \; \R \; v$, that is $u = v$ in $\M / \N$. This completes the proof.
\end{proof}

In general, given a partial group $\LL=(\LL,D,\Pi,i)$, and a subset $S \subseteq W(\LL)$, one can freely add the relations in $S$ in the following way. Set $R:= \{w \in D \mid (w)\Pi =1_\LL\}$ and build the free partial group $\M^+=(\M^+,D^+,\Pi^+,i^+)$ over $(\LL,D \cup S, i)$; if $\N^+ \subseteq \M^+$ is the partial subgroup of $\M^+$ generated by elements of $R$ (seen as elements of $\M^+$ as in the previous lemma), then the domain of the quotient $\M^+ /\N^+$ contains the words in $S$. In particular, we have the following properties.

\begin{lemma}\label{lemma:adding relations}
Let $\LL$, $S$, $\M^+$, $R$ and $\N^+$ be as above, then $\M^+ /\N^+$ contains an isomorphic copy of $\LL$. If $\N'$ is the partial subgroup of $\M^+$ generated by the relations in $R \cup S$, then $\N'/\N^+$ is embedded in $\M^+/\N^+$ and $\M^+/\N'$ is isomorphic to $(\M^+/\N^+) / (\N'/\N^+)$.

\begin{proof}
Let $\M=(\M,D_\M,\Pi_\M, i_\M)$ be the free partial group over $(\LL,D,i)$ and $\N$ the partial subgroup of $\M$ generated by $R$. Then Proposition \ref{prop:every-pg-is-quot-free} shows that $\M / \N \cong \LL$; consider the diagram below
\begin{center}
\begin{tikzcd}
(\LL,D,i) \ar[r,hook] \ar[d,hook] & \M \ar[r,two heads,"q"] \ar[d,hook,"j"] & \M / \N \ar[d,"\iota"] \\
(\LL,D \cup S, i) \ar[r,hook] & \M^+ \ar[r, two heads,"q^+"] & \M^+ / \N^+ 
\end{tikzcd}
\end{center}
where $j$ is induced by the universal property of the free group $\M$ and $\iota$ by the universal property of the cokernel $\M/\N$, since $i_\N j q^+ = \hat{1}$ (as $(\N)j \subseteq \N^+$) for $i_\N$ the inclusion of $\N$ in $\M$. Since $\N$ and $\N^+$ are generated by the same elements, $\N$ is an impartial subgroup of $\M^+$ and $\N^+$ the partial subgroup generated by $\N$. Lemma \ref{lemma:quotients-part-impart-sbgr} shows that $\M^+/\N = \M^+/\N^+$, so $\iota$ is the inclusion morphism; since $\M/\N \cong \LL$, $\LL$ is embedded in $\M^+/\N^+$.\\
Consider now
\begin{center}
\begin{tikzcd}
\N' \ar[d,hook,"h"] \ar[r,two heads, "q^+|_{\N'}"] & \N' / \N^+ \ar[d,hook,"h'"] \\
\M^+ \ar[r, two heads,"q^+"] \ar[d,two heads, "g"] & \M^+ / \N^+  \ar[r,two heads,"t"] \ar[ld,two heads,"\eta"] & (\M^+/\N^+) \diagup (\N'/\N^+) \\
\M^+ / \N' 
\end{tikzcd}
\end{center}
where $t=coker(h')$ and $g=coker(h)$, whereas the map $\eta$ is induced by the universal property of the cokernel $\M^+/\N^+$. Since $h' \eta = \hat{1}$ and $h q^+ t = \hat{1}$, the universal properties of $(\M^+/\N^+) \diagup (\N'/\N^+)$ and $\M^+ / \N'$ yield maps $\psi, \phi: (\M^+/\N^+) \diagup (\N'/\N^+) \rightleftarrows \M^+ / \N'$ which are one inverse of the other, proving the wanted isomorphism.
\end{proof}
\end{lemma}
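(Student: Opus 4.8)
The plan is to reduce the statement to Proposition~\ref{prop:every-pg-is-quot-free} by comparing $\M^+$ with the \emph{smaller} free partial group $\M$ over $(\LL,D,i)$, transporting the resulting identifications along the canonical morphism $\M\fto\M^+$, and using Lemma~\ref{lemma:quotients-part-impart-sbgr} to pass freely between a subset and the partial subgroup it generates.

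First I would set up the comparison. Let $\N$ be the partial subgroup of $\M$ generated by the images of the elements of $R$; by Proposition~\ref{prop:every-pg-is-quot-free} we have $\M/\N\cong\LL$. Since $D\subseteq D\cup S$, the identity on $\LL$ is a morphism $(\LL,D,i)\fto(\LL,D\cup S,i)$ of $Set^s$, and the free-construction functor sends it to a morphism $j\colon\M\fto\M^+$. By Lemma~\ref{free-part-gr-str-set}, $\M$ and $\M^+$ are realized inside one and the same group $G_\LL$ (which depends only on the set $\LL$ and on $i$) as the smallest impartial subgroups satisfying conditions (1)--(2) for, respectively, $S_X=D$ and $S_X=D\cup S$; hence $\M$ is an impartial subgroup of $\M^+$, $j$ is the inclusion, and the partial subgroup of $\M^+$ generated by $\N$ is exactly $\N^+$. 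Because $j$ carries $\N$ into $\N^+$, the composite $\N\hookrightarrow\M\xrightarrow{j}\M^+\xrightarrow{q^+}\M^+/\N^+$ is trivial, so the universal property of the cokernel $\M/\N$ yields a morphism $\iota\colon\M/\N\fto\M^+/\N^+$; once $\iota$ is shown to be an embedding, $\iota(\M/\N)\cong\LL$ is the asserted copy of $\LL$ inside $\M^+/\N^+$.

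For the second assertion, $\N^+$ is a partial subgroup of $\N'$ (both are partial subgroups of $\M^+$ and $R\subseteq R\cup S$), so $\N'/\N^+:=coker(\N^+\hookrightarrow\N')$ makes sense, and the composite $\N'\hookrightarrow\M^+\xrightarrow{q^+}\M^+/\N^+$ kills $\N^+$ and therefore factors through $\N'/\N^+$, exhibiting $\N'/\N^+$ as an impartial subgroup of $\M^+/\N^+$. The isomorphism $\M^+/\N'\cong(\M^+/\N^+)/(\N'/\N^+)$ I would then obtain exactly as in the proof of Lemma~\ref{lemma:quotients-part-impart-sbgr}: the composite $\M^+\fto\M^+/\N^+\fto(\M^+/\N^+)/(\N'/\N^+)$ sends $\N'$ to the unit and hence factors through $\M^+/\N'$; conversely $\M^+\fto\M^+/\N'$ kills $\N^+$, hence $\N'/\N^+$, and so factors through $(\M^+/\N^+)/(\N'/\N^+)$; the uniqueness clauses of the two universal properties force the resulting morphisms to be mutually inverse.

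I expect the injectivity of the comparison map $\iota$ (and, analogously, of $\N'/\N^+\fto\M^+/\N^+$) to be the main obstacle. In contrast with the group case there is no normal-subgroup bookkeeping available, and $\M^+$ is in general strictly larger than $\M$, so this has to be argued at the level of the explicit equivalence relations of Section~\ref{cocompleteness}: one must verify that the relation $\R$ on $\M^+$ defining $\M^+/\N^+$ restricts, on the subset $\M$, to the relation defining $\M/\N$ --- equivalently, that adjoining the extra domain words $S$, together with the new elements of $\M^+$ they produce, forces no fresh identifications among the elements of $\M$. This is exactly where the choice $R=\{w\in D\mid(w)\Pi=1_\LL\}$ is used: these are relations already valid in $\LL$, so nothing new is being imposed on $\LL$ itself.
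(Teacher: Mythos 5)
Your setup is the same as the paper's: you introduce the free partial group $\M$ over $(\LL,D,i)$ with $\N \le \M$ generated by $R$, invoke Proposition \ref{prop:every-pg-is-quot-free} for $\M/\N \cong \LL$, realize $\M$ inside $\M^+$ (both living in the same $G_X$ by Lemma \ref{free-part-gr-str-set}), observe that $\N^+$ is the partial subgroup of $\M^+$ generated by $\N$, and obtain the comparison morphism $\iota\colon \M/\N \fto \M^+/\N^+$ from the universal property of the cokernel. Your treatment of the last assertion (the two mutually inverse maps between $\M^+/\N'$ and $(\M^+/\N^+)/(\N'/\N^+)$ coming from the two cokernel universal properties) is also essentially the paper's argument.

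The genuine gap is that you never establish the embedding claims, which are the actual content of the first part of the lemma: you construct $\iota$ and then say ``once $\iota$ is shown to be an embedding\dots'' and defer injectivity (of $\iota$ and of $\N'/\N^+ \fto \M^+/\N^+$) as ``the main obstacle,'' offering only the reformulation that the relation $\R$ on $\M^+$ defining $\M^+/\N^+$ should restrict on $\M$ to the relation defining $\M/\N$, together with a heuristic for why no new identifications should arise. That reformulation is correct but is exactly the statement needing proof, and the heuristic (``the relations in $R$ already hold in $\LL$'') does not by itself rule out that condition \eqref{condition of sim}, applied to the strictly larger domain $D_{\M^+}$, forces extra identifications among elements of $\M$. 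The paper closes this point not by analysing the equivalence relations directly but by exploiting the observation you already made and then did not use: since $\N$ is an impartial subgroup of $\M^+$ whose generated partial subgroup is $\N^+$, Lemma \ref{lemma:quotients-part-impart-sbgr} (applied with ambient partial group $\M^+$) gives $\M^+/\N \cong \M^+/\N^+$, and from this identification the comparison with $\M/\N \cong \LL$ is read off, yielding the copy of $\LL$ inside $\M^+/\N^+$. So the missing idea is to route the embedding through Lemma \ref{lemma:quotients-part-impart-sbgr} in $\M^+$ rather than leaving it as an unverified statement about the relation $\R$; as written, your proposal proves the third assertion but not the first two.
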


\subsection{Chermak's elementary expansions revisited}
As an example, we show that Chermak's construction in \cite[Section 5]{Chermak:2013}, renamed \emph{elementary expansion} in \cite[Section 3]{Chermak:2016:II}, is a special case of a construction made in terms of generators and relations; this was the fundamental step which allowed Chermak to build the centric linking system associated to a given saturated fusion system $\F$ and so eventually defining the classifying space of $\F$. Some notions about fusion systems together with a fairly broad background about localities are here required; we will in particular use results from \cite[Section 2, pages 57-64]{Chermak:2013} or, equivalently, from \cite[Sections 1 and 2]{Chermak:2022}. As a note to the reader, \cite[Section 3]{Chermak:2016:II} will be considered part of the background needed for the example; we will indeed adopt the same notation.\\

For the entire section $\LL=(\LL,\Delta,S)$ is a locality over the fusion system $\F$ in the sense of \cite[Remark 2.8 (2)]{Chermak:2013}, or equivalently as in \cite[Section 2]{Chermak:2016:II}, and write also $\LL=(\LL, D_\LL, \Pi, i)$. For every $g \in \LL$ Chermak defines $S_g := \{x \in S \mid (g\inv,x,g) \in D_\LL, \, x^g \in S \}$; in general, denote by $c_g$ the conjugation map by the element $g \in \LL$ and for every word $w=(w_1,\dots,w_n) \in W(\LL)$ write
\[
S_w := \{ x \in S \mid c_{w_1} \cdot \dots \cdot c_{w_n} \text{ is defined on $x$ and maps $x$ into $S$} \}.
\]

The goal consists in building a new locality $\LL^+=(\LL^+, D^+, \Pi^+, i^+)$ which has the same associated fusion system $\F$, but a larger set $\Delta^+$ of objects. Therefore suppose we wish to add the subgroup $R \le S$ to the set of objects, then the definition of objectivity, as in \cite[Definition 2.6]{Chermak:2013}, implies that we need to add also the following:
\begin{itemize}
\item[(i)] all the elements in $R^\F$ to $\Delta$;
\item[(ii)] new elements to $\LL$, which afford conjugations among the newly added objects.
\end{itemize}
This suggests to define $\Delta^+$ as the closure of $\Delta \cup R^\F$ in the set of subsets of subgroups of $S$, with respect to taking $\F$-conjugates and overgroups. We then build $\LL^+=(\LL^+, D^+, \Pi^+, i^+)$ as a quotient of the free partial group $\M = (\M, D_\M, \Pi_\M, i_\M)$ over the object $(\LL, T, i) \in Set^s$, with $T$ being a suitable subset $T \subset W(\LL)$ and $i$ the inversion map of $\LL$. Under few additional conditions (which were identified by Chermak) the partial group $\LL^+$ ends up possessing the structure of a locality of the form $(\LL^+,\Delta^+, S)$. The correct idea in order to identify $T$ is due to Chermak and consists in applying \cite[Corollary 2.7]{Chermak:2022}, so we choose
\[
T:=\{ w \in W(\LL) \mid S_w \in \Delta^+ \}.
\]

Indeed consider the partial group $(\M, D_\M, \Pi_\M,i_\M)$ just defined. If $S_w \in \Delta$, then $w \in D_\LL \subseteq D_\M$ (as $\Delta \subseteq \Delta^+$); on the other hand, suppose that $S_w \in \Delta^+ \setminus \Delta$ for $w = (w_1, \dots, w_k)$, then there exists an $\F$-conjugate $P_0$ of $R$ such that $P_0 \le S_{w_1}$ and so $w \in D_\M$ via $P_0$. In particular, all the elements $P_i:= P_{i-1}^{w_i}$ are $\F$ conjugates of $R$. Take now an $\F$ conjugate of $R$, say $(R)f$, and suppose that $f$ is the composition of restrictions of conjugacy maps induced by elements $(x_1, \dots,x_n) \in W(\LL)$ (recall that $\F$ is generated by such maps); then $\M$ has the property that $(x_1, \dots, x_n) \in D_\M$ with $(x_1, \dots, x_n)\Pi_\M  = x_1 \circ \dots \circ x_n$. In other words, all the $\F$-conjugations between elements of $R^\F$, and therefore also of $\Delta^+$, are realized as conjugations by some element of $\M$; this will translate in $(\LL^+,\Delta^+)$ being objective.\\

Just as in the proof of Proposition \ref{prop:every-pg-is-quot-free}, define the set $I$ of inner relations of $\LL$ by $I:= \{ u \in D \mid (u)\Pi = 1_\LL \}$ and set $\N := \langle I \rangle$, that is the partial subgroup of $\LL$ generated by $I$. Then we define
\[
\displaystyle \LL^+ := \frac{\M}{\N}
\]
By the definition of $T$ we have $D \subseteq T$, so Lemma \ref{lemma:adding relations} shows that $\LL$ is embedded in $\LL^+$. The following set of hypothesis was identified by Chermak in \cite[Hypothesis 3.2]{Chermak:2016:II} and guarantees that $(\LL^+, \Delta^+, S)$ is a locality.

\begin{hypothesis}\label{hyp:Chermak-for-locality}
The subgroup $R \le S$ satisfies the following:
\begin{itemize}
\item[(i)] every strict overgroup of $R$ contained in $S$ is an element of $\Delta$;
\item[(ii)] $R$ and $O_p(N_\F(R))$ are fully normalized in $\F$;
\item[(iii)] $N_\LL(R)$ is a subgroup of $\LL$ such that $N_\F(R) = \F_{N_S(R)}(N_\LL(R))$.
\end{itemize}
\end{hypothesis}

In particular, (i) implies $\Delta^+=\Delta \cup R^\F$; moreover the hypothesis of \cite[Lemma 3.1]{Chermak:2016:II} are fulfilled and points (b) and (c) of the lemma imply that for any $V \in R^\F$ there exists an element $y \in \LL$ such that $V = R^y$ and $N_S(V) \le S_{y\inv}$. Following the notation in \cite[Section 3]{Chermak:2016:II}, $\textbf{Y}_V$ denotes the set of such elements $y$ and $\textbf{X}_V := \{y\inv \mid y \in \textbf{Y}_V\}$.  Now for any element $g \in \LL$ there are two possible cases:
\begin{itemize}
\item[(a)] $S_g$ does not contain an element of $R^\F$; in this case the conjugation $c_g: S_g \fto S_g^g$ cannot be realized via an element of $R^\F$;
\item[(b)] $S_g$ contains an element $U \in R^\F$, then the word $g$ is in $D_\M$ via $(U,U^g)$.
\end{itemize}

The following lemma is the fundamental result connecting our construction with that of Chermak; one sees that the results which allowed Chermak to build a set and define a partial group structure are here used just to describe the partial group $\LL^+$. Most of the following proof consists indeed in the first lines of the proof of \cite[Lemma 3.8]{Chermak:2016:II}.
\begin{lemma} \label{lemma:description of elements of L plus}
The following hold.
\begin{itemize}
\item[(i)] If $g \in \LL$ is such that $S_g$ satisfies condition (b) above, then $g$ is equivalent to $(x\inv, h, y)$ in $\LL^+$ for suitable $x \in \textbf{X}_U$, $y \in \textbf{Y}_V$ and $h \in N_\LL(R)$, where $U,V \in R^\F$.
\item[(ii)] If $w=(w_1, \dots, w_n) \in \M$, then either each of the $w_i$ satisfies (b) (and we say that $w$ is \emph{of type (b)}), or none of them does; in the first case there exist suitable $x_i \in \textbf{X}_{U_i}$, $y_i \in \textbf{Y}_{V_i}$ and $h_i \in N_\LL(R)$ such that the word $w':=(x_1\inv,h_1,y_1,x_2\inv,h_2,y_2,\dots,x_n\inv,h_n,y_n) \in D_\M$ and it is equivalent to $w$ in $\LL^+$.
\item[(iii)] We have that each $(y_i,x_{i+1}\inv) \in D_\LL$ and also $w_0':=(h_1,y_1x_2\inv,h_2,y_2x_3\inv,\dots,y_{n-1}x_n\inv,h_n) \in D_\LL$; in particular, $\LL^+$ is finite.
\end{itemize}
\begin{proof}
Suppose that $g \in D_\M$ via $U \in R^\F$ and set $U^g =: V$; we have already seen that under the hypothesis \ref{hyp:Chermak-for-locality} the sets $\textbf{Y}_V$ and $\textbf{X}_U$ are not empty, thus pick $y \in \textbf{Y}_V$ and $x \in \textbf{X}_U$ and set $u:=(x,g,y\inv)$. Then
\[
u=(x,g,y\inv) \in D_\LL \qquad \text{via} \qquad (N_{S_g}(U)^{x\inv},N_{S_g}(U),N_{S_{g\inv}}(V),N_{S_{g\inv}}(V)^{y\inv}),
\]
so define $h:=(u)\Pi$ and note that $h \in N_\LL(R)$. Now the word $u':=(x\inv,x,g,y\inv,y) \in D_\LL$ via $N_{S_u}(U)$ and we get $g = (u')\Pi = (x\inv,h,y)\Pi$, thus proving (i).\\
If any of the $w_i$ for $i \in \{1,\dots,n\}$ is such that $S_{w_i}$ contains an $\F$-conjugate of $R$, than all the preceding and all the following $w_j$ share the same property, as the $S_{w_j}$ are, by definition, obtained by $\F$-conjugation. Suppose now that (b) holds for each $w_i$ and let $U_i$, $V_i$ be as $U$ and $V$ in (i), so that $w_i \in D_\M$ via $(U_i, V_i)$; since $w \in D_\M$ we can choose the sequence of the $(U_i,V_i)$ such that $U_i^{w_i} =V_i = U_{i+1}$. Since by (i) each $(x_i\inv, h_i, y_i)$ is equivalent to $w_i$ in $\LL^+$, $w'$ is equivalent to $w$; this is (ii).\\
Note now that each $(y_i, x_{i+1}\inv) \in D_\LL$ via $(N_S(V_i)^{y_i\inv},N_S(V_i)=N_S(U_{i+1}),N_S(U_{i+1})^{x_i\inv})$; in particular we get $(y_i,x_{i+1}\inv)\Pi_\M = (y_i, x_{i+1}\inv)\Pi = y_i x_{i+1}\inv $ in $\LL^+$ and, moreover, $y_i x_{i+1}\inv \in N_\LL(R)$. Then $w_0' \in W(N_\LL(R))$ and, $N_\LL(R)$ being a subgroup of $\LL$, it is a word of $D_\LL$. Moreover, $w = w' = (x_1\inv, (w_0')\Pi, y_n)$ in $\LL^+$. Let $\textbf{Y}:= \bigcup_{V \in R^\F} \textbf{Y}_V$ and similarly $\textbf{X} := \bigcup_{V \in R^\F} \textbf{X}_V$. If $w$ is such that no $w_i$ satisfies (b), then the word $(w_1, \dots, w_n)$ must be in $D_\M$ via some element of $\Delta$; in other words, $(w_1, \dots, w_n) \in D_\LL$, so $w =(w_1, \dots, w_n)\Pi$ in $\LL^+$. On the other hand, we have shown that whenever each $w_i$ satisfies (b), then $w$ is equivalent to an element appearing in the set $\textbf{X} \times N_\LL(R) \times \textbf{Y}$, which is finite being $\LL$ finite. This shows that also $\LL^+$ is finite, proving (iii).
\end{proof}
\end{lemma}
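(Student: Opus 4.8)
The plan is to mirror the opening of the proof of \cite[Lemma 3.8]{Chermak:2016:II}, re-reading Chermak's set-level constructions as statements about the quotient partial group $\LL^+ = \M / \N$. Throughout I would use Hypothesis \ref{hyp:Chermak-for-locality}: point (i) gives $\Delta^+ = \Delta \cup R^\F$, and via \cite[Lemma 3.1]{Chermak:2016:II} the sets $\textbf{X}_U$ and $\textbf{Y}_V$ are non-empty for all $U,V \in R^\F$; the partial-group axioms of Definition \ref{def partial gr} will do the rest.

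For part (i), let $U \in R^\F$ witness condition (b) for $S_g$ and put $V := U^g$. Choosing $x \in \textbf{X}_U$ and $y \in \textbf{Y}_V$, the computation at the heart of the argument is that $u := (x, g, y\inv)$ lies in $D_\LL$ — via an explicit chain of normalizers of $U$ inside $S_g$ and of $V$ inside $S_{g\inv}$ — and that its product $h := (u)\Pi$ normalizes $R$. Then the word $u' := (x\inv, x, g, y\inv, y)$ also lies in $D_\LL$, and reading off its product in two ways (directly $(u')\Pi = g$, while collapsing the subword $(x, g, y\inv)$ to $h$ via axiom (3) gives $(u')\Pi = (x\inv, h, y)\Pi$) yields $g = (x\inv, h, y)\Pi$, an identity that holds already in $\LL$ and hence in $\LL^+$.

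For part (ii), I would first establish the dichotomy at the level of objects. If $w = (w_1, \dots, w_n) \in D_\M$, it is witnessed by a chain with $P_0 \le S_{w_1}$ and $P_i := P_{i-1}^{w_i} \in \Delta^+$; the $P_i$ are pairwise $\F$-isomorphic, and since $\Delta$ is closed under $\F$-conjugacy while $R \notin \Delta$ one has $\Delta \cap R^\F = \emptyset$, so either all $P_i$ lie in $R^\F$ or none does, and (as $P_{i-1} \le S_{w_i}$) this is exactly the alternative ``every $w_i$ satisfies (b)'' versus ``no $w_i$ satisfies (b)''. In the first case put $U_i := P_{i-1}$ and $V_i := P_i$, so that $V_i = U_{i+1}$, and apply (i) to each $w_i$ to get $x_i \in \textbf{X}_{U_i}$, $y_i \in \textbf{Y}_{V_i}$, $h_i \in N_\LL(R)$. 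Termwise $w'$ is equivalent to $w$ in $\LL^+$; that $w' \in D_\M$ is checked by splicing the normalizer chains of the individual triples $(x_i\inv, h_i, y_i)$ along the equalities $V_i = U_{i+1}$, so that the resulting chain begins at an object of $\Delta^+$. Closure of $\Delta^+$ under overgroups then gives $S_{w'} \in \Delta^+$, hence $w' \in D_\M$ by the very definition of the domain of the free partial group $\M$ over $(\LL, T, i)$.

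For part (iii), each pair $(y_i, x_{i+1}\inv)$ lies in $D_\LL$ — via $(N_S(V_i)^{y_i\inv}, N_S(V_i), N_S(U_{i+1})^{x_i\inv})$, using $V_i = U_{i+1}$ — with product $y_i x_{i+1}\inv \in N_\LL(R)$; since $N_\LL(R)$ is a subgroup of $\LL$ by Hypothesis \ref{hyp:Chermak-for-locality}(iii) and all entries of $w_0' = (h_1, y_1 x_2\inv, h_2, \dots, y_{n-1} x_n\inv, h_n)$ lie in it, $w_0' \in D_\LL$, and regrouping $w'$ by axiom (3) shows $w$ is equivalent in $\LL^+$ to $(x_1\inv, (w_0')\Pi, y_n)$. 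Finiteness of $\LL^+$ is then immediate: a word no entry of which satisfies (b) is already in $D_\LL$, while one all of whose entries satisfy (b) is equivalent to an element of the finite set $\textbf{X} \times N_\LL(R) \times \textbf{Y}$, with $\textbf{X} := \bigcup_{V \in R^\F} \textbf{X}_V$ and $\textbf{Y} := \bigcup_{V \in R^\F} \textbf{Y}_V$, both finite because $\LL$ is. The step I expect to be the main obstacle is the bookkeeping in part (ii): keeping the choices $U_i = V_{i-1}$ consistent all along the word and verifying that the spliced normalizer chain genuinely witnesses $w' \in D_\M$ — which is exactly where the choice $T = \{ w \mid S_w \in \Delta^+ \}$ of relations defining $\M$ and the closure of $\Delta^+$ under $\F$-conjugacy and overgroups come into play.
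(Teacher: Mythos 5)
Your proposal follows the paper's own argument essentially step for step: the same triple $(x,g,y\inv)$ and word $(x\inv,x,g,y\inv,y)$ with the same normalizer chains for (i), the same conjugation-chain dichotomy and termwise replacement by $(x_i\inv,h_i,y_i)$ for (ii), and the same reduction to $W(N_\LL(R))$ and the finite set $\textbf{X}\times N_\LL(R)\times \textbf{Y}$ for (iii). The only difference is that you spell out slightly more explicitly why $\Delta\cap R^\F=\emptyset$ forces the dichotomy and why the spliced chain puts $w'$ in $T\subseteq D_\M$, which the paper leaves implicit; this is a refinement, not a different route.
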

The lemma just proven shows actually more; it relates our construction with that of the set $\Phi$ and with the description given in \cite[Section 3 and description 3.9]{Chermak:2016:II} (or equivalently the set $\Theta$ in \cite[Section 5, Lemmas 5.8 and 5.9]{Chermak:2013}). \\
In \cite[Proposition 3.15(c)]{Chermak:2016:II} Chermak characterizes $\LL^+$ via a push-out diagram taken in $Part$. We show that our definition of $\LL^+$ coincides with that of Chermak by showing that also in our case $\LL^+$ is the push-out of the same diagram. Define $\LL_0 := \{ g \in \LL \mid S_g \text{ contains an $\F$-conjugate of $R$} \}$ and $D_0:=\{w \in D_\LL \mid S_w \text{ contains an $\F$-conjugate of $R$} \}$; it is not difficult to see that $\LL_0$, paired with the domain $D_0$, forms an impartial subgroup of $\LL$. Define $\LL_0^+ := \{ u \in \LL^+ \mid \text{$u$ is the image of an element of $\M$ of type (b)} \}$; then $\LL_0^+$ forms a partial subgroup of $\LL^+$. The embedding of $\LL$ in $\LL^+$ induces an embedding of $\LL_0$ in $\LL_0^+$, thus we get the following diagram 
\begin{equation*} \label{diagram:po} \tag{$\circ$}
\begin{tikzcd} 
\LL_0^+ \ar[r,hook,"\iota"] 	& \LL^+ \\
\LL_0 \ar[u,hook, "\lambda_0"] \ar[r,hook,"\iota_0"]	& \LL \ar[u,hook,"\lambda"]
\end{tikzcd}
\end{equation*}
where every morphism is an embedding. Our definition of $\LL_0$ clearly coincides with the same given in \cite[Lemma 3.14]{Chermak:2016:II}; as a consequence of Lemma \ref{lemma:description of elements of L plus} and of \cite[3.9]{Chermak:2016:II} also $\LL_0^+$ coincides with that defined by Chermak. Since the diagram (\ref{diagram:po}) is the same appearing in \cite[Proposition 3.15 (c)]{Chermak:2016:II}, we only need to show the following.
\begin{lemma}\label{lemma:equal construction of el.exp}
The diagram (\ref{diagram:po}) is a push-out diagram in $Part$.
\begin{proof}
Suppose to have partial group homomorphisms $f: \LL \fto \H$ and $g: \LL_0^+ \fto \H$, for some partial group $\H$, such that $\iota_0f = \lambda_0 g$; by Lemma \ref{lemma:description of elements of L plus}(ii) every element in $\LL^+$ is either an element of $\LL$ or an element of $\LL_0^+$, thus we can define
\[
\psi: \LL^+ \fto \H, \quad u \mapsto \begin{cases*}
										(u)g \quad & \text{if $u \in \LL_0^+$}\\
										(u)f \quad & \text{if $u \in \LL$}
										\end{cases*}
\]
Note that $\LL_0 = \LL_0^+ \cap \LL$ and that (\ref{diagram:po}) is clearly commutative, since all morphisms are inclusions; thus the map $\psi$ is well-defined. Every word in the domain $D_{\LL^+}$ of $\LL^+$ lifts to a representative in $D_\M$; the proof of Lemma \ref{lemma:description of elements of L plus} shows that either all representatives in $D_\M$ are of type (b), or none is. Then $\psi^*$ coincides with $f^*$ over words which lift to a representative not of type (b) and with $g^*$ over words lifting to representatives of type (b). This implies that $\psi$ is a morphism of partial groups.\\
Finally, uniqueness of the map $\psi$ is trivial; any other candidate map $\LL^+ \fto \H$ would restrict to $f$ over $\LL$ and to $g$ over $\LL_0^+$, coinciding then with $\psi$.
\end{proof}
\end{lemma}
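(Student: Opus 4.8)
The plan is to verify the universal property of the push-out directly. First I would record the elementary facts that every map in (\ref{diagram:po}) is an inclusion and that $\LL_0 = \LL_0^+ \cap \LL$, so that commutativity of (\ref{diagram:po}) is automatic. Then, given a partial group $\H$ together with morphisms $f : \LL \fto \H$ and $g : \LL_0^+ \fto \H$ with $\iota_0 f = \lambda_0 g$, I would glue them by the evident case split
\[
\psi : \LL^+ \fto \H, \qquad (u)\psi = \begin{cases} (u)g & u \in \LL_0^+ \\ (u)f & u \in \LL. \end{cases}
\]

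For $\psi$ to be a well-defined function on the underlying set I need two inputs, both supplied by Lemma \ref{lemma:description of elements of L plus}. The dichotomy in part (ii) shows that, as a set, $\LL^+ = \LL \cup \LL_0^+$: an element of $\M$ is either of type (b), in which case its image lies in $\LL_0^+$, or it is not, in which case it is already a word of $D_\LL$ and maps into $\LL$. And the two prescriptions agree on the overlap $\LL_0 = \LL_0^+ \cap \LL$ precisely because the hypothesis $\iota_0 f = \lambda_0 g$ says $f$ and $g$ restrict to the same map on $\LL_0$. By construction $\lambda\psi = f$ and $\iota\psi = g$.

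The substantive point is that $\psi$ is a morphism of partial groups, i.e. $(D_{\LL^+})\psi^* \subseteq D_\H$ and the product square commutes. Here I would again use Lemma \ref{lemma:description of elements of L plus}: a word $\ov w \in D_{\LL^+}$ lifts to a representative $w \in D_\M$, and (from the proof of that lemma) the entries of $w$ are either all of type (b) or none is. If all are of type (b), $\ov w$ is represented by a word with all entries in $\LL_0^+$, so on it $\psi^* = g^*$ and the product square for $\psi$ reduces to the one for the morphism $g$; if none is, then $w \in D_\LL$, $\ov w$ has all entries in $\LL$, $\psi^* = f^*$, and the square reduces to the one for $f$. In either case the domain condition and the commuting square follow. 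Uniqueness is then immediate: any other morphism $\LL^+ \fto \H$ restricting to $f$ on $\LL$ and to $g$ on $\LL_0^+$ must coincide with $\psi$ on all of $\LL^+ = \LL \cup \LL_0^+$.

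The step I expect to be the real obstacle is exactly this ``uniform type'' behaviour of lifts: were a domain word of $\LL^+$ allowed to mix type-(b) and non-type-(b) entries, there would be no coherent way to route it through $f^*$ or $g^*$ and the product square could fail. The dichotomy of Lemma \ref{lemma:description of elements of L plus} --- which ultimately rests on the fact that the sets $S_{w_i}$ along a word are obtained from one another by $\F$-conjugation, hence simultaneously do or do not contain an $\F$-conjugate of $R$ --- is what makes the gluing legitimate. Everything else is bookkeeping.
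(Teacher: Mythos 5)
Your proposal matches the paper's proof essentially step for step: the same gluing map $\psi$ via the case split, the same appeal to Lemma \ref{lemma:description of elements of L plus} (both for the set-theoretic covering $\LL^+ = \LL \cup \LL_0^+$ with overlap $\LL_0$ and for the uniform-type dichotomy of lifts to $D_\M$, which is exactly what the paper uses to reduce the morphism property of $\psi$ to that of $f$ and $g$), and the same trivial uniqueness argument. No gaps; this is the paper's argument.
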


The proof that $(\LL^+, D^+, S)$ is a locality is now given in \cite[Proposition 3.17]{Chermak:2016:II}, whereas \cite[Proposition 3.18]{Chermak:2016:II} shows uniqueness of such a locality.

\bibliographystyle{alpha}
\bibliography{rep-coh-local}

\end{document}